\documentclass[11pt]{article}
\usepackage{graphicx} 
\usepackage{mathrsfs}
\usepackage{indentfirst}
\usepackage{enumerate}
\usepackage{cite}
\usepackage{comment}
\usepackage{color}
\usepackage{amsmath}
\usepackage{cases}
\usepackage{amsthm}
\usepackage{amssymb}
\numberwithin{equation}{section}
\usepackage{mathtools}
\usepackage{stmaryrd}
\theoremstyle{definition}
\newtheorem*{thm*}{Theorem}
\newtheorem{thm}{Theorem}[section]
\newtheorem{prop}[thm]{Proposition}
\newtheorem{lem}[thm]{Lemma}

\newtheorem{remark}[thm]{Remark}
\theoremstyle{plain}
\usepackage{tikz}
\usetikzlibrary{intersections, calc, angles, arrows.meta}
\usetikzlibrary{patterns}

\newcommand{\ep}{\varepsilon}

\renewcommand{\phi}{\varphi}

\newcommand{\N}{\mathbb{N}}

\newcommand{\R}{\mathbb{R}}

\newcommand{\h}{\hspace{2mm}}
\newcommand{\upsi}{\underline{\psi}}
\newcommand{\uPsi}{\underline{\Psi}}

\newcommand{\cE}{\mathcal{E}}

\newcommand{\cF}{\mathcal{F}}

\newcommand{\cL}{\mathcal{L}}

\renewcommand{\hat}{\widehat}
\renewcommand{\Tilde}{\widetilde}

\newcommand{\pl}{\partial}
\newcommand{\norm}[1]{\|#1 \|}

\begin{document}
\title{Grow-up rates of inhomogeneous semilinear heat equations in the critical dimension}
\author{Kenta Kumagai}
\maketitle

\begin{abstract}
This paper concerns the grow-up rate of solutions to a semilinear heat equation in the unit ball with the exponential nonlinearity and an inhomogeneous term $f$. When $f=0$, it is known that the large-time behavior of a solution changes at the critical dimension $N=10$, and the grow-up phenomenon occurs for the case $N\ge 10$. For the inhomogeneous case with $N\ge 10$, the present author and a coauthor showed in \cite{KO26} that the grow-up phenomenon disappears once $f$ exceeds a threshold. 

In this paper, we provide a quantitative characterization of the disappearance of the grow-up phenomenon by obtaining the grow-up rate in the critical dimension. Our result shows that a qualitatively different type of grow-up behavior occurs in the critical dimension compared with the case $N\ge 11$ studied in \cite{KO26}. The difference is caused by a change in the outer behavior of the solution. Even in the case $f=0$, our result is new in that it provides a rigorous justification of the formal computation by Galaktionov and King \cite{GK}.
\end{abstract}
\noindent Addresses:

\smallskip
\noindent
K.~K.: Graduate School of Mathematical Sciences, The University of Tokyo,\\
3-8-1 Komaba, Meguro-ku, Tokyo 153-8914, Japan.

\smallskip
\noindent
E-mail: {\tt kumagai-kenta@g.ecc.u-tokyo.ac.jp}\\


\noindent
{\it 2020 Mathematics Subject Classification.}
35K58, 35B40, 35B33, 35B35
\vspace{3pt}

\noindent
{\it Keywords.} Semilinear heat equation,
grow up rate, critical dimension.
\vspace{3pt}


\section{Introduction}
We consider the following semilinear heat equation
\begin{align}\label{eq-intro-1}
\begin{cases}
\partial_t u-\Delta u=\lambda e^u-f(r),
&\text{$x\in B_{1}$, $t>0$,} \quad u=0, \quad \text{$x\in \pl B_{1}$, $t>0$},
\\
u(x,0)=u_0(x) \in C^{0}(\overline{B_1}) &\text{$x\in B_1$},
\end{cases}
\end{align}
where $r:=|x|$, $\lambda>0$, $B_1\subset\R^{N}$ is the unit ball with $N\geq 3$ and 
\begin{equation}
\label{katei-f}
0\le f(r)\in \mathrm{Lip}[0,1].
\end{equation}
We assume throughout this paper that
$u_{0}\ge \phi_0$ in $B_1$, where $\phi_0$ is the solution of
\begin{equation*}
-\Delta \phi_0 = -f(r) \quad \text{in $B_1$,}\quad \phi_0=0 \quad \text{on $\partial B_1$}.
\end{equation*}
The assumption $u_{0}\ge \phi_0$ is natural since every stationary solution $u$ satisfies $u \ge\phi_0$ in $B_1$ by the maximum principle.

The dynamics of \eqref{eq-intro-1} exhibit a qualitative change at the critical parameter $\lambda^{*}\in (0,\infty)$. More precisely, the following properties are known (see, \cite{Fu1969, BCMR, PV1995,JL}). If $\lambda<\lambda^{*}$, \eqref{eq-intro-1} has a unique stable stationary solution $u_{\lambda}\in C^2(\overline{B_1})$ in the sense that
\begin{equation*}
\int_{B_1}|\nabla \xi|^2-\lambda e^{u_{\lambda}} \xi^2\,dx\ge 0 \quad \text{for any $\xi\in C^{1}_{c}(B_1)$.}
\end{equation*}
Moreover, for any $u_0\le u_{\lambda}$ in $B_1$, \eqref{eq-intro-1} admits a unique global-in-time solution $u$ such that $u\to u_{\lambda}$ in $C^2 (B_1)$ as $t\to\infty$. On the other hand, if $\lambda>\lambda^{*}$, \eqref{eq-intro-1} admits no (even weak) stationary solution. Moreover, for any $u_{0}\in C^{0}(\overline{B_1})$, \eqref{eq-intro-1} admits a unique local-in-time solution which blows up in finite time. Finally, in the critical case $\lambda=\lambda^{*}$, problem \eqref{eq-intro-1} admits a unique stationary solution $u^{*}\in H^{1}_{0,\mathrm{rad}}(B_1)\cap C^{2}_{\mathrm{loc}}(\overline{B_{1}}\setminus \{0\})$. In addition, for any $u_0\le u^{*}$ in $B_1$, \eqref{eq-intro-1} admits a unique global-in-time solution $u$ such that $u\to u^{*}$ in $L^2 (B_1)\cap C^{2}_{\mathrm{loc}}(\overline{B_{1}}\setminus \{0\})$ as $t\to\infty$.

In the critical case $\lambda=\lambda^{*}$, the large-time behavior of solutions depends on the dimension $N$. Indeed, for the case $f=0$, it is known in \cite{JL,PV1995, Fu1969} that $u^{*}\in C^2(\overline{B_1})$ and hence $u\to u^{*}$ in $C^{2}(B_1)$ as $t\to\infty$ when $N\le 9$. On the other hand, we have $u^{*}\not\in L^{\infty}(B_1)$ and hence $\lVert u\rVert_{L^{\infty}}\to \infty$ as $t\to \infty$ when $N\ge 10$ (i.e., the grow-up phenomenon occurs in this case).
Motivated by this result, the present author and a coauthor \cite{KO26} studied the effect of $f$ on the asymptotic behavior of $u$ for the critical case $\lambda=\lambda^{*}$, and showed that the grow-up phenomenon disappears when $f$ exceeds the threshold $f_H$, where
\begin{equation}
\label{defH}
H:=\text{the first eigenvalue of }
\begin{cases}
\text{$-\Delta_D$ in $B_{1}^{2}\subset \mathbb{R}^2$}& \text{if $N=10$,} \\
\text{$-\Delta_D-\frac{2N-4}{|x|^2}$ in $B_{1}^{N}\subset \mathbb{R}^N$} & \text{if $N\ge 11$}\\
\end{cases}
\end{equation}
and $f_h$ is defined as
\begin{equation}
\label{deffh}
f_h= h\left(8(N-2)a^{2}(r)+2(N-2)a(r)+1\right) \quad \text{with} \quad a(r)=\frac{1}{2N-4+hr^2} \quad
\end{equation}
for any parameter $h\ge 0$.
More precisely, the following are proved in \cite{KO26}:
\begin{prop}
\label{intro-prop-1}
Let $N\ge 3$ and $\lambda=\lambda^{*}$. Assume that \eqref{katei-f}, $u_0\in C^{0}(\overline{B_1})$ and $\phi_0\le u_0\le u^{*}$ in $B_1$. 
Then, \eqref{eq-intro-1} admits a unique global solution $u$ such that $u\to u^*$ in $L^2(B_1)\cap C^2_{\mathrm{loc}}(\overline{B_1}\setminus \{0\})$ as $t\to \infty$. Furthermore, $u^{*}\in H^{1}_{0,\mathrm{rad}}(B_1)$ and $u$ satisfy
\begin{enumerate}
\item[(i)] $u^{*}\in C^{2}(\overline{B_1})$ and thus
$u\to u^{*}$ in $C^{2}(B_1)$ as $t\to\infty$ when $N\le 9$;
\item[(ii)] $u^{*}\not \in L^{\infty}(B_1)$ and
thus $\lVert u\rVert_{L^{\infty}(B_1)}\to\infty$ as $t\to\infty$ when $N\ge 10$ and $f\le f_H$ in $B_1$;
\item[(iii)] $u^{*}\in C^{2}(\overline{B_1})$ and
thus $u\to u^{*}$ in $C^{2}(B_1)$ as $t\to\infty$ when $N\ge 10$, $f\ge f_H$ in $B_1$
and $f\not \equiv f_H$,
\end{enumerate}
where $H$ and $f_h$ are those in \eqref{defH} and \eqref{deffh}, respectively,
\end{prop}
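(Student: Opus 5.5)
The plan is to handle the dynamical part by monotonicity and comparison, and then reduce (i)--(iii) to regularity properties of the extremal stationary solution $u^{*}$, which I characterize by stability.

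\emph{Global existence and convergence.} Since $\lambda e^{u}>0$, the function $\phi_0$ is a subsolution of \eqref{eq-intro-1}. Hence the solution $\underline{u}$ starting from $\phi_0$ is nondecreasing in $t$. It stays below $u^{*}$, by comparison with the weak stationary solution $u^{*}$; I would justify this by approximating $u^{*}$ by the classical $u_\lambda$, $\lambda\uparrow\lambda^{*}$, which are supersolutions at level $\lambda^{*}$ after a shift. So $\underline{u}$ converges monotonically to a weak stationary solution $v\le u^{*}$. Uniqueness of the stationary solution at $\lambda=\lambda^{*}$ then gives $v=u^{*}$. For general $\phi_0\le u_0\le u^{*}$, I sandwich the solution $u$ between $\underline{u}$ and $u^{*}$. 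This gives convergence in $L^{2}$ by monotone convergence. Radial symmetry, together with monotonicity of $\underline u$ and $u^*$ in $r$, gives uniform bounds on $\{r\ge\delta\}$, so parabolic regularity upgrades the convergence to $C^{2}_{\mathrm{loc}}(\overline{B_1}\setminus\{0\})$.

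\emph{Case (i), $N\le 9$.} The key fact is that $u^{*}$ is semistable, being the limit of the stable $u_\lambda$. I run the Crandall--Rabinowitz argument: test the stability inequality with $\xi=e^{\alpha u}-1$ (truncated) and combine it with the equation tested against $e^{2\alpha u}-1$. This gives $e^{u^{*}}\in L^{p}$ for all $p<5$. The inhomogeneous term $-f$ has a favourable sign and $u^{*}\ge\phi_0$ is bounded below, so it only contributes lower-order terms. Since $5>N/2$ exactly when $N\le 9$, elliptic regularity gives $u^{*}\in L^\infty$, and then $u^*\in C^{2}(\overline{B_1})$.

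\emph{Cases (ii)--(iii), $N\ge10$.} The threshold comes from an explicit singular solution. For $h\ge0$, set
\[
w_h(r)=-\log\bigl(r^{2}(2N-4+hr^{2})\bigr)+\log(2N-4+h).
\]
This vanishes on $\partial B_1$, and a direct computation should give $-\Delta w_h+f_h=\lambda_h e^{w_h}$ for a suitable $\lambda_h$. The linearized potential is $\lambda_h e^{w_h}=c/(r^{2}(2N-4+hr^{2}))$, which behaves like $(2N-4)/r^{2}$ near the origin. Semistability of $w_h$ is therefore governed by the Hardy inequality, using $(N-2)^2/4\ge 2N-4$ iff $N\ge10$, plus the remainder term. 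This remainder is exactly where the first eigenvalue $H$ in \eqref{defH} enters: for $N=10$ the Hardy constant is critical and the remainder reduces to a two-dimensional Dirichlet problem. I expect $w_h$ to be semistable iff $h\le H$.

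For (ii), with $f\le f_H$, $w_H$ is a semistable singular supersolution of the problem with $f$. I would use the standard characterization that a semistable singular $H^{1}$ solution must be the extremal one, in the style of Brezis--V\'azquez. Comparing $u^{*}$ with $w_H$ and using the nonexistence of stationary solutions for $\lambda>\lambda^{*}$ then forces $u^{*}$ to be singular.

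For (iii), with $f\ge f_H$ and $f\not\equiv f_H$, I argue by contradiction. Any singular semistable radial solution has the profile $-2\log r+O(1)$ at $0$. Subtracting $w_H$ and testing the linearized stability inequality with a cutoff of the first eigenfunction attached to $H$, the strict excess $f-f_H\ge0$, $\not\equiv0$, produces a negative quadratic form. This contradicts semistability, so $u^{*}$ is bounded.

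I expect this last step to be the main obstacle. At $N=10$ the Hardy constant is critical, so the singular behaviour of a general semistable solution is only known up to logarithmic corrections. Making the test-function argument rigorous needs a sharp asymptotic expansion of $u^{*}$ near the origin, which I would try to obtain via an Emden--Fowler change of variables $s=-\log r$.
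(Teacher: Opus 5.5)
Your dynamical argument and case (i) are the standard ones and are fine in outline. One slip: $u_\lambda-c$ with $c>0$ has negative boundary values, so it is not a supersolution of the Dirichlet problem. Instead, compare the parabolic flows at $\lambda<\lambda^*$ with $u_\lambda\le u^*$ and let $\lambda\uparrow\lambda^*$.

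The real problems are in (ii)--(iii). First, your explicit singular solution has the wrong sign. The family used in \cite{KO26} is $U_h=-2\log r+\log(2N-4+hr^2)-\log(2N-4+h)$ with $\lambda_h=2N-4+h$. For it, $\lambda_he^{U_h}=\frac{2N-4}{r^2}+h$, i.e.\ $K\equiv h$. Semistability iff $h\le H$ is then literally the definition \eqref{defH} of $H$, with $\mu_1=H-h$, and no remainder analysis is needed. Your $w_h$ can solve the equation with a Lipschitz forcing only if $\lambda=(2N-4)^2/(2N-4+h)$, to cancel the $r^{-2}$ term. The forcing is then $-(2N-4)h\,(2a+4a^2)\le 0$, not $f_h$. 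Its potential $\frac{2N-4}{r^2}-(2N-4)h\,a(r)$ lies below the Hardy potential, so $w_h$ is stable for every $h$ and no threshold appears. Second, the comparison in (ii) is reversed. If $f\le f_H$, then $-\Delta U_H=\lambda_He^{U_H}-f_H\le\lambda_He^{U_H}-f$, so $U_H$ is a \emph{sub}solution of the $f$-problem; it is a supersolution only when $f\ge f_H$.

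More fundamentally, the Brezis--V\'azquez characterization needs a singular semistable \emph{solution} of the problem with the given $f$, and for $f\ne f_h$ you have none. $U_H$ solves a different equation at a different $\lambda$, and a sub- or supersolution at $\lambda_H$ says nothing directly about whether $u^*$ is singular at $\lambda^*(f)$.

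The missing ingredients are the ones the paper quotes from \cite{KO26}:
\begin{enumerate}
\item For every admissible $f$ there is a unique singular radial stationary solution $(\lambda_*,U_*)$, and it is stable iff $u^*\notin L^\infty$.
\item A separation argument comparing $U_*$ with the explicit $U_H$ shows that $U_*$ is stable when $f\le f_H$, and unstable when $f\ge f_H$, $f\not\equiv f_H$.
\end{enumerate}
Then (ii) follows from Brezis--V\'azquez, giving $U_*=u^*$. For (iii), a radial, semistable, unbounded $u^*$ would be a singular radial solution, hence equal to the unstable $U_*$ by uniqueness; so $u^*$ is bounded.

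Your sketch for (iii) (``subtract $w_H$ and test with a cutoff of the $H$-eigenfunction'') does not say how the pointwise excess $f-f_H\ge 0$ becomes an inequality between the potentials $\lambda^*e^{u^*}$ and $\frac{2N-4}{r^2}+H$. Note that $\lambda^*\ne\lambda_H$ in general, and this conversion is exactly what the comparison with $U_H$ must deliver. Finally, the $N=10$ difficulty is not a logarithmic correction in the profile: $K\in C^0[0,1]$ gives $U_*=-2\log r+\mathrm{const}+o(1)$. The difficulty is that the critical ground state, which behaves like $r^{-4}$, is not in $H^1$, so cutoff errors are only logarithmically small.
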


The change in the large-time behavior of $u$ shown in Proposition \ref{intro-prop-1} is caused by a change in the stability of singular radial stationary solutions\footnote{Here, we say that $(\lambda_*, U_{*}(r))$ is a singular radial stationary solution if $U_{*}(r)\in C^2_{\mathrm{loc}}(0,1]$ satisfies the equation in $B_{1}\setminus \{0\}$ for $\lambda=\lambda_*$
together with the Dirichlet boundary condition and $U_{*}(r)\to \infty$ as $r\to 0$.}. Indeed, 
it is shown in \cite{BV, KO26} that
\eqref{eq-intro-1} has a unique singular radial stationary solution $(\lambda_{*}, U_{*})$. Moreover, the singular solution is stable if and only if $u^{*}\not \in L^{\infty}$. The results show that the grow-up phenomenon occurs precisely in the stable case. Note that this relationship is well-known in \cite{BV,MN20,MN23} for the case $f=0$. 

For the parameterized families of inhomogeneous terms $f=f_h$, a family of singular radial stationary solutions is explicitly constructed in \cite{KO26} as follows:
\begin{equation*}
(\lambda_h, U_{h})=(2N-4+h, -2\log r-\log a(r)-\log (2N-4+h)).
\end{equation*}
The explicit form of the singular solutions enables us to study the stability of $U_{h}$. As a result, the authors \cite{KO26} showed by using a separation result that the stability/instability of $U_{*}$ changes at $f=f_H$ for the case $N\ge 10$, which explains the disappearance of the grow-up phenomenon qualitatively.

We aim to provide a quantitative characterization of the disappearance of the grow-up phenomenon by obtaining the grow-up rates. It is worth noting that the analysis of grow-up rates has been extensively studied in the literature (see, e.g., \cite{CZ2022, Mizo06, GK, PY03, FKWY06, FKWY07-ADE, FKWY07,DGLV1998,FWY} for related problems). For the problem \eqref{eq-intro-1}, the first eigenvalue $\mu_1$ of the following linearized operator 
\begin{equation*}
\mathcal{L}=-\Delta-\lambda^{*}e^{u^{*}}
\end{equation*}
plays a key role in the characterization of the grow-up rate. In fact, the present author and a coauthor \cite{KO26}
determined the grow-up rate in terms of $\mu_1$ for $N\ge 11$.
\begin{thm}[see \cite{KO26}]
\label{intro-thm-1}
Assume that $N\ge 11$ and $0\le f\le f_H$ in $B_1$ in addition to the hypotheses of Proposition \ref{intro-prop-1}. 
Then, the grow-up rate of the solution to \eqref{eq-intro-1} is characterized as follows:
\begin{enumerate}
\item[(i)] $\lVert u\rVert_{L^{\infty}(B_1)}=\frac{2\mu_1}{\gamma}t+O(1)$ as $t\to\infty$ if $f\not \equiv f_{H}$;
\item[(ii)] $\lVert u\rVert_{L^{\infty}(B_1)}=\frac{2}{\gamma}\log t+O(1)$ as $t\to\infty$ if $f\equiv f_H$ and $N\ge 12$;
\item[(iii)] $\lVert u\rVert_{L^{\infty}(B_1)}=\frac{2}{\gamma}(\log t+\log \log t)+O(1)$ as $t\to\infty$ if $f\equiv f_H$ and $N=11$,
\end{enumerate}
where $\mu_1$ is the first eigenvalue of $\mathcal{L}=-\Delta - \lambda^{*}e^{u^{*}}$ and $\gamma$ is defined by
\begin{equation}
\label{defgam}
\gamma:=\frac{1}{2}\left(N-2-\sqrt{(N-2)(N-10)}\right) \h \text{(in particular, $\gamma=4$ for
$N=10$)}.
\end{equation}
\end{thm}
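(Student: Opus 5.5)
Since Theorem \ref{intro-thm-1} is quoted from \cite{KO26}, I only sketch how I would reprove it there. The approach is a matched asymptotic (inner--outer) construction of radial sub- and supersolutions, closed by comparison. Throughout I use Proposition \ref{intro-prop-1}(ii) and the stability of the singular solution $U_*$: for $f\le f_H$ one has $\lambda^*=\lambda_*$ and $u^*=U_*$, so $\mu_1\ge 0$, with $\mu_1=0$ exactly when $f\equiv f_H$. Near the origin, $U_*(r)=-2\log r+\log\frac{2N-4}{\lambda^*}+o(1)$.

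\textbf{Step 1 (inner region, $r\lesssim e^{-\ell(t)/2}$).} I write the solution as $u(x,t)\approx \ell(t)+W(e^{\ell/2}r)$, where $W$ is the regular solution of $\Delta W+\lambda^* e^W=0$, $W(0)=0$, on $\R^N$. For $N\ge 11$ its expansion at infinity is
\[
W(\rho)=-2\log\rho+\log\tfrac{2N-4}{\lambda^*}-c\rho^{-\gamma}+\dots,
\]
with $\gamma$ as in \eqref{defgam}, the smaller root of $\gamma^2-(N-2)\gamma+2(N-2)=0$. Consequently, in the intermediate region $u-U_*\approx -c\,e^{-\gamma\ell/2}r^{-\gamma}$.

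\textbf{Step 2 (outer region).} Here $u-U_*$ is small and evolves by the linearized equation $\pl_t v+\mathcal L v=0$. I expand $v\approx -\phi_1(r)\,\sigma(t)$, where $\phi_1$ is the first eigenfunction of $\mathcal L$, normalized by $\phi_1\sim r^{-\gamma}$ as $r\to0$; the slower root $\gamma$ governs the singular behaviour of $\phi_1$. Matching with Step 1 gives $\sigma(t)\sim e^{-\gamma\ell/2}$. The orthogonality (solvability) condition obtained by testing the equation with $\phi_1$ determines $\sigma$, and hence $\ell$, in each case.
\begin{itemize}
\item[(i)] If $f\not\equiv f_H$, then $\mu_1>0$ and $\sigma' =-\mu_1\sigma$, so $\sigma\sim e^{-\mu_1 t}$ and $\ell=\frac{2\mu_1}{\gamma}t+O(1)$.
\item[(ii), (iii)] If $\mu_1=0$, the decay comes from the next-order nonlinear correction to the inner profile. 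This correction is quadratic, giving $\sigma'\sim-\sigma^2$, provided $\phi_1\in L^2$ and the relevant pairing converges, which holds for $N\ge 12$. Then $\sigma\sim 1/t$, so $\ell=\frac{2}{\gamma}\log t$. For $N=11$ the pairing integral $\int\phi_1^2$ diverges logarithmically at $r\sim e^{-\ell/2}$. This produces $\sigma'\sim-\sigma^2\log(1/\sigma)$, so $\sigma\sim(t\log t)^{-1}$, which gives the $\log\log t$ term.
\end{itemize}

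\textbf{Step 3 (rigorous closure).} I would glue the inner and outer profiles with cutoffs, and perturb the parameters $\ell$ (or $\sigma$) by $\pm$ small multiples to obtain a radial subsolution and a supersolution. Since $u_0\le u^*$, and by adjusting a time shift, comparison traps $u$ between them, which yields the $O(1)$ error. Intersection-number (zero number) arguments with the family $\ell+W(e^{\ell/2}r)$ should help control the inner region.

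\textbf{Main obstacle.} I expect the main difficulty in the degenerate case $\mu_1=0$. There the ODE for $\sigma$ is driven by a second-order effect, so the supersolution must capture the exact coefficient of $\sigma^2$, or of $\sigma^2\log(1/\sigma)$ when $N=11$, to within errors that do not accumulate. I would first build the outer correction by solving $\mathcal L\psi=g$ with $g\perp\phi_1$, and then check its singular behaviour at $r\to0$ against the inner expansion.
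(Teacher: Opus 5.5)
\textbf{Assessment.} The heuristic part of your proposal is essentially correct, but the explanation of the $N=11$ case contains an error, and the rigorous part (Step 3) is only a plan.

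\textbf{The $N=11$ versus $N\ge 12$ dichotomy.} The paper does not reprove this theorem; it is quoted from \cite{KO26}. The paper only indicates that the outer behaviour of $\Phi=u^{*}-u$ is governed by its projection $Q(t)\phi_1$. Your formal picture agrees with this and gives the right rates: outer profile $Q\phi_1$ with $\phi_1\sim r^{-\gamma}$, inner profile $\alpha+W(e^{\alpha/2}r)$ with $\alpha=\|u(t)\|_{L^\infty}$, matching $Q\sim e^{-\gamma\alpha/2}$, and $Q'=-\mu_1 Q$ in case (i).

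However, your explanation of what separates (ii) from (iii) is wrong. We have $\int_{B_1}\phi_1^2\,dx=1$, and $\phi_1\in L^2(B_1)$ for every $N\ge 10$ (Proposition \ref{evprop}). So neither ``$\phi_1\in L^2$'' nor a divergence of $\int\phi_1^2$ can separate $N=11$ from $N\ge 12$.

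The relevant quantity is the projection of the quadratic part of the nonlinearity. $\Phi$ solves $(\partial_t+\mathcal{L})\Phi=-\lambda^{*}e^{u^{*}}\mathcal{F}(\Phi)$ with $\mathcal{F}(\Phi)=e^{-\Phi}+\Phi-1\approx \Phi^2/2$, so for $\mu_1=0$
\[
Q'(t)\approx-\frac{Q(t)^2}{2}\int_{B_1}\lambda^{*}e^{u^{*}}\phi_1^3\,dx,\qquad \lambda^{*}e^{u^{*}}\phi_1^3\,r^{N-1}\sim r^{N-3-3\gamma}\quad (r\to 0).
\]
This integral converges iff $3\gamma<N-2$, which holds exactly for $N\ge 12$. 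For $N=11$ one has $\gamma=3$, so the integral diverges logarithmically. It is cut off at $r\sim Q^{1/\gamma}\sim e^{-\alpha/2}$, which produces the factor $\gamma^{-1}\log(1/Q)$.

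You must also check that the inner region $r\lesssim e^{-\alpha/2}$ contributes only $O(e^{-(N-2-\gamma)\alpha/2})$. This is $o(Q^2)$ for $N\ge 12$, and $O(Q^2)$ for $N=11$, which is negligible against $Q^2\log(1/Q)$. So the mechanism is the outer quadratic term, not a correction to the inner profile.

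\textbf{The rigorous closure.} Step 3 is where the whole proof lies, and it is only a plan. In particular, the time dependence of a glued inner profile $\ell(t)+W(e^{\ell/2}r)$ produces the error $\ell'(t)\bigl(1+\tfrac{\rho}{2}W'(\rho)\bigr)>0$. This has the wrong sign for a subsolution of $u$, and you do not say how to handle it.

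The route indicated in the paper, and carried out in Section 3 for $N=10$, avoids gluing altogether:
\begin{enumerate}
\item[(a)] One works with the weighted function $e^{\mu_1 t}r^{\gamma}\Phi$. For it, $\psi_1=c\,r^{\gamma}\phi_1$ is a regular first eigenfunction of $-\Delta-K$ in the fictitious dimension of Proposition \ref{evprop}(iii).
\item[(b)] One builds only outer super- and subsolutions there. Comparison is valid because $\mathcal{F}$ is nondecreasing. The initial ordering comes from a time shift (using $r^{\gamma}\Phi(\cdot,t)\to 0$) and from the strong maximum principle and the Hopf lemma.
\item[(c)] The rate then follows from two pointwise inequalities involving only $\alpha(t)$. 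The trivial bound $\Phi(r,t)\ge u^{*}(r)-\alpha(t)$ at a suitably shrinking radius, combined with the supersolution, bounds $\alpha$ from below. An inner estimate analogous to Proposition \ref{prop-inner} at a fixed radius, combined with the subsolution, bounds $\alpha$ from above.
\end{enumerate}

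\textbf{Your stated main obstacle is overstated.} Since $\alpha=\frac{2}{\gamma}\log(1/Q)+O(1)$, it suffices to have $Q\asymp t^{-1}$ (resp.\ $Q\asymp (t\log t)^{-1}$). So the super- and subsolutions may carry any two coefficients $0<c_1<c_2$ in front of $Q^2$ (resp.\ $Q^2\log(1/Q)$); the exact constant is not needed.
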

\begin{remark}
\rm{The operator $\cL$ has discrete spectrum, and its eigenfunctions form an orthonormal basis of $L^2(B_{1})$ when $N\ge 10$. The first eigenvalue $\mu_1$ satisfies $\mu_1(f_1)\ge \mu_1(f_2)>0$ if $f_1\le f_2\le f_H$ and $f_2\not\equiv f_H$ in $B_1$. Moreover, 
the map $f\mapsto \mu_1 $ is continuous and $\mu_1\downarrow 0$ as $f\uparrow f_H$. For the special case $f=f_h$, the first eigenvalue is given by $\mu_1=H-h$. The precise statements are summarized in Proposition \ref{evprop}. }
\end{remark}
For the case $f=0$ and $N\ge 11$, Dold, Galaktionov, Lacey and V\'azquez \cite{DGLV1998} obtained the same grow-up rate. Theorem \ref{intro-thm-1} includes the result in \cite{DGLV1998} as a special case. In addition, Theorem \ref{intro-thm-1} gives a quantitative understanding of the disappearance of the grow-up phenomenon. In fact, as $f\uparrow f_H$, the coefficient of the leading-order term tends to $0$ because $\mu_1\downarrow 0$. Furthermore, at the threshold case $f=f_H$, the grow-up rate becomes logarithmic. We also observe that, an additional log-log type correction term emerges for the dimension $N=11$.

The main result in this paper shows that a different type of grow-up behavior appears in the critical dimension $N=10$. 
\begin{thm}
\label{intro-thm-2}
Assume that $N=10$ and $0\le f\le f_H$ in $B_1$ in addition to the hypotheses of Proposition \ref{intro-prop-1}.
Then, the grow-up rate of the solution to \eqref{eq-intro-1} is characterized as follows:
\begin{align*}
\text{(i) }\lVert u\rVert_{L^{\infty}(B_1)}&=\frac{\mu_1}{2} t +\frac{1}{2}\left(1+\frac{1}{\mu_1 b}\right)\log t+O(1)\text{ as $t\to\infty$ if $f\not \equiv f_{H}$ in $B_{1}$;}\\
\text{(ii) }
\lVert u\rVert_{L^{\infty}(B_1)}&=(\frac{t}{2b})^{1/2}+O(\log t) \h \text{ as $t\to\infty$ if $f\equiv f_H$ in $B_{1}$,}
\end{align*}
where $\mu_1$ is that in Theorem \ref{intro-thm-1} and $b>0$ is a constant depending only on $f$ and defined in \eqref{defb}.
\end{thm}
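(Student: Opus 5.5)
The plan is a matched-asymptotics argument turned into rigorous sub- and supersolution bounds, following \cite{KO26} but with the outer analysis redone for $N=10$. The self-similar variables use the singular profile $U_*$. Near the origin, $u^*(r)=-2\log r+c_*+o(1)$. For $N=10$ the linearization $-\Delta-\frac{2(N-2)}{r^2}=-\Delta-\frac{16}{r^2}$ around the singular profile is exactly Hardy-critical, since $(N-2)^2/4=16$. Hence the radial kernel near $0$ is spanned by $r^{-4}$ and $r^{-4}\log r$, instead of the two distinct powers $r^{-\gamma_\pm}$ available for $N\ge 11$. This logarithmic degeneracy is the source of the $\log t$ correction in (i) and of the $t^{1/2}$ law in (ii).

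\textbf{Inner and outer problems.} Write $u(r,t)=u^*(r)-w(r,t)$ away from the core. For $r\gg \rho(t):=e^{-\|u\|_\infty/2}$, the quantity $w$ is small and solves approximately the linear problem $w_t+\cL w=0$. Expanding in the eigenbasis of $\cL$ (Proposition~\ref{evprop}), one gets $w\approx c(t)\,\phi_1(r)e^{-\mu_1 t}$, with $\phi_1(r)\sim r^{-4}\log(1/r)\cdot(\text{const})$ as $r\to0$; the constant $b$ of \eqref{defb} should encode this normalisation. In the inner region $r\sim\rho(t)$, $u$ is quasi-stationary and close to a regular Gelfand solution $-2\log\rho+\Phi(r/\rho)$. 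Here $\Phi$ is the entire radial solution of $-\Delta\Phi=\lambda^*e^\Phi$, with $\Phi(\xi)=-2\log\xi+c_*+A\xi^{-4}\log\xi+B\xi^{-4}+\dots$ as $\xi\to\infty$, the log term again reflecting $N=10$. Matching at $r\sim\rho$ gives $\rho^4\log(1/\rho)\asymp c(t)e^{-\mu_1t}$. With $\|u\|_\infty=-2\log\rho$ this yields $2\|u\|_\infty\approx \mu_1 t+\log\|u\|_\infty+O(1)$, i.e.\ the $\frac{\mu_1}{2}t+\frac12\log t$ part. The extra $\frac{1}{2\mu_1 b}\log t$ should come from $c(t)$ not being constant. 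The flux from the core feeds back into the outer problem through the $\xi^{-4}\log\xi$ tail, producing an ODE of the form $\dot c\approx \frac{c}{b\,\|u\|_\infty}$. Since $\|u\|_\infty\sim\mu_1t/2$, this gives $c\sim t^{1/(2\mu_1 b)}$, and the exponents combine to the stated coefficient. In case (ii), $\mu_1=0$ and the only dynamics is this slow ODE: $\frac{d}{dt}\|u\|_\infty\approx \frac{1}{4b\|u\|_\infty}$, hence $\|u\|_\infty^2\approx t/(2b)$.

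\textbf{Rigorous steps.} (1) Use the explicit $f_h$ solutions $U_h$ and monotonicity in $f$ (Proposition~\ref{intro-prop-1} and the remark) only as guidance. Work for general $f$ through $u^*$ and the spectral data of $\cL$. (2) Construct, for each case, explicit super- and subsolutions by gluing $-2\log\rho_\pm(t)+\Phi(r/\rho_\pm)$ in the inner region with $u^*-c_\pm(t)\phi_1e^{-\mu_1t}$ plus correction terms in the outer region. The matching is done at an intermediate radius $r\sim\rho^{\theta}$, choosing $\rho_\pm$ to solve the ODEs above with $O(1)$ perturbations, for instance $c_\pm(t)=(1\pm\epsilon)t^{1/(2\mu_1b)}$. (3) Use comparison: order the initial data after a time shift, which is possible since $u\to u^*$ and $u\le u^*$. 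Then $u$ is trapped between the barriers, so $\|u(t)\|_\infty$ is trapped between $-2\log\rho_\pm(t)$. (4) Read off the expansions, including the $O(\log t)$ error in (ii), which comes from the lower-order terms in the ODE.

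\textbf{Main obstacle.} The hardest step is (2): verifying the differential inequalities in the matching zone. The Hardy-critical log terms make the errors only logarithmically smaller than the main terms. The second eigenfunction component and the correction to $\phi_1$ must therefore be handled to precision $1/\log(1/\rho)$, and this precision is exactly what fixes the coefficient $1/(\mu_1 b)$. My first attempt would be to add an explicit second-order corrector $\psi$ solving $\cL\psi=\phi_1/\log$-type source in the outer region. I would then check that the residual has a sign once $r\ge\rho^\theta$ with $\theta$ close to $1$.
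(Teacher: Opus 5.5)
There is a genuine gap. It sits exactly where the term $\frac{1}{2\mu_1 b}\log t$ and the constant $b$ come from.

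\textbf{Where the logarithm and $b$ really come from.} First, $\phi_1$ has no logarithm at the origin. By Proposition \ref{evprop}(iii),(v), $\phi_1$ is a constant multiple of $r^{-4}\psi_1$ with $\psi_1\in C^2(\overline{B_1^2})$ and $\psi_1(0)\in(0,\infty)$, so $\phi_1\simeq r^{-4}$. The $r^{-4}\log r$ term in the outer profile is produced instead by the time dependence of the amplitude.

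In the variable $\Psi=e^{\mu_1t}r^4(u^*-u)$ the linear operator is $\cE=-\Delta-K-\mu_1$ on $B_1^2$, whose kernel is spanned by $\psi_1$. Hence $(\partial_t+\cE)(c\psi_1)=\dot c\,\psi_1$ cannot be balanced by an $H^1_0$ corrector, since $\cE\xi=-\psi_1$ is not solvable. One is forced to use the two-dimensional fundamental solution, i.e. a correction $\dot c\,\bigl(\nu_0(-\log r)+\xi\bigr)$ with $\cE\xi=-\psi_1+\nu_0(K+\mu_1)(-\log r)\perp\psi_1$. This orthogonality is exactly \eqref{pnu0}, and it fixes $\nu_0=1/(2\pi\psi_1(0))$. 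What remains is a sink $2\pi\nu_0\dot c$ times the Dirac mass at the origin, which the absorbing nonlinearity in the core must take up.

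Near $r=0$ the outer profile is then $e^{-\mu_1t}r^{-4}\bigl(c\,\psi_1(0)-\nu_0|\dot c|\log(1/r)\bigr)$. Matching it with the inner tail $A\rho^4r^{-4}\log(r/\rho)$, $A>0$, forces $\rho^4\simeq e^{-\mu_1t}|\dot c|$ and $c\,\psi_1(0)=\nu_0|\dot c|\log(1/\rho)+O(|\dot c|)$. This gives $-c/\dot c=b\mu_1t-b\log(-\dot c)+O(1)$ with $b=\nu_0/(4\psi_1(0))=1/(8\pi\psi_1(0)^2)$, or equivalently $\dot c\approx-c/(2b\norm{u}_\infty)$. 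This is the ODE behind Lemma \ref{zetalem}. Your proposal does not contain this solvability mechanism; your flux heuristic yields neither $\nu_0$ nor the correct sign. It also attributes $b$ to the normalisation of a logarithm in $\phi_1$ that does not exist.

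\textbf{Consequences for your plan.} Your amplitude law in case (i) is wrong. The amplitude must decay, $c\simeq t^{-1/(\mu_1 b)}$. Your ODE $\dot c\approx c/(b\norm{u}_\infty)$ has the wrong sign and is off by a factor $2$, and your $c\sim t^{1/(2\mu_1 b)}$ does not even solve it. Inserting your $c$ into your own (correct) relation $2\norm{u}_\infty-\log\norm{u}_\infty=\mu_1t-\log c+O(1)$ gives the coefficient $\frac12-\frac{1}{4\mu_1 b}$ of $\log t$, not $\frac12\bigl(1+\frac{1}{\mu_1 b}\bigr)$. Your case (ii) formula $\frac{d}{dt}\norm{u}_\infty\approx1/(4b\norm{u}_\infty)$ is right, but it follows from the corrected ODE, not from yours.

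Barriers built with $c_\pm=(1\pm\epsilon)t^{1/(2\mu_1b)}$ have $\dot c>0$. The point source at the origin then has the wrong sign for the absorbing nonlinearity, and the log correction cannot match the inner Gelfand tail. So step (2) fails as specified, and beyond that it is never actually constructed.

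\textbf{How the paper proceeds.} The paper does not glue inner Gelfand profiles at all. It builds barriers $\zeta\psi_1+\zeta'\bigl(\nu_i(t)(-\log r)+\xi\pm C\psi_1\bigr)$ for $\Psi$ alone, with $\nu_i(t)\to\nu_0$. For the supersolution, $-\log r$ is cut off at the scale $\delta(t)$, where $\delta^4=-e^{-\mu_1t}\zeta_1'$.

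The core is handled through the structure of $\cF$. For the supersolution, $r^{-4}e^{-\mu_1t}\Psi_1\ge2$ in $B_{2\delta}$, so the absorbing term dominates the concentrated cut-off error. For the subsolution, $\cF=0$ where $\uPsi<0$.

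The paper then reads off $\norm{u}_\infty$ in two steps. The lower bound comes from $(u^*-u)(\delta(t-\tau),t)\le C$. The upper bound comes from the subsolution at a fixed radius $\ep$ combined with Proposition \ref{prop-inner}.
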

\begin{remark}
\rm{We note that $f\mapsto b$ is continuous. In particular, we have $c<b<C$ for any $0\le f\le f_H$ with some $0<c<C$ independent of $f$. For the special case
$f=f_h$, $b$ is explicitly given by $b=J_{1}^{2}(\sqrt{H})/8$, where $J_1$ is the Bessel function of the first kind of order $1$. The precise statements are summarized in Lemma \ref{blem}.}
\end{remark}
For the case $N=10$ and $f=0$, Galaktionov and King \cite{GK} derived the same grow-up rate by a formal computation, while the rigorous justification remains open due to the analytical difficulties.
Theorem \ref{intro-thm-2} gives a rigorous justification to their formal computation for the case $f=0$. Moreover, Theorem \ref{intro-thm-2} shows that in the critical dimension $N=10$, the transition of grow-up rates near the threshold $f=f_H$ is essentially different from that in dimensions $N\ge 11$. Indeed, when $f\not \equiv f_H$ and $f\le f_H$, a logarithmic correction term appears in contrast to the case $N\ge 11$.
Furthermore, this correction term becomes larger as $f$ approaches $f_H$, and turns into the leading-order contribution at the threshold $f=f_H$. Consequently, the algebraic grow-up rate $t^{1/2}$ emerges at the threshold, which is intermediate between linear and logarithmic rates.

We now explain the mechanisms behind the different grow-up rates obtained in Theorems \ref{intro-thm-1} and \ref{intro-thm-2}. The differences in these grow-up rates are reflected in the behavior of the solution in the outer region.
In the case $N\ge 11$, it is shown in \cite{KO26} that the outer behavior of $\Phi:=u^{*}-u$ is 
governed by its projection $Q(t)\phi_1$
onto the first eigenspace of $\mathcal{L}$, where $\phi_1>0$ is the first eigenfunction satisfying $\norm{\phi_1}_{L^2(B_1)}=1$. The decay rate of $Q(t)$ changes depending on $f$ and it yields the different grow up rates stated in Theorem \ref{intro-thm-1}. We refer to \cite[Subsection 4.1]{KO26} for details.

On the other hand, the situation is qualitatively different in the case $N=10$. In fact, we show the following
\begin{thm}
\label{intro-thm-3}
Assume the hypotheses of Theorem \ref{intro-thm-2}. Let $\Phi:=u^{*}-u$ and $\phi_1>0$ be the principal eigenfunction of $\cL$ with $\norm{\phi_1}_{L^2(B_1)}=1$. 
Then, there exist $M,C,T>0$ depending only on $u_0$ and $f$ such that 
\begin{equation*}
\frac{1}{C}e^{-\mu_1 t}\Bigl(\zeta_2(t)\phi_1+\nu |\zeta_2'(t)|r^{-4}\log r \Bigr)- Me^{-\mu_1 t}|\zeta_2'(t)| \phi_1 \le \Phi \quad \text{in $B_1\times (T,\infty)$}
\end{equation*}
and
\begin{equation*}
   \Phi \le C e^{-\mu_1 t}\Bigl(\zeta_1(t)\phi_1-\nu |\zeta_1'(t)| r^{-4}\Gamma(r,t) \Bigr) \quad \text{in $B_1 \times (T,\infty)$.}
\end{equation*}
Here, $\Gamma=\Gamma(r,t)$ is a smooth regularization of $-\log r$ near $r=0$ defined in \eqref{defGam}. The functions $\zeta_1$ and $\zeta_2$ are defined in Lemma \ref{zetalem}, and the constant $\nu>0$ is defined in \eqref{defb}.
\end{thm}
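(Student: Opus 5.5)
Both bounds will come from the parabolic comparison principle applied to the linearized-type equation for $\Phi=u^*-u$, using explicit sub- and supersolutions. Subtracting the equations for $u^*$ and $u$ gives
\begin{equation*}
\partial_t\Phi+\cL\Phi=\lambda^*e^{u^*}\bigl(e^{-\Phi}-1+\Phi\bigr)\ge 0 ,\qquad \Phi=0 \text{ on } \pl B_1 .
\end{equation*}
From Proposition \ref{intro-prop-1}, $0\le \Phi$ and $\Phi\to 0$ in $C^2_{\mathrm{loc}}(\overline{B_1}\setminus\{0\})$. Writing $\lambda^*e^{u^*}\approx 8r^{-2}$ near $0$ for $N=10$, one has
\begin{equation*}
0\le \lambda^*e^{u^*}\bigl(e^{-\Phi}-1+\Phi\bigr)\le \tfrac12\lambda^*e^{u^*}\Phi^2 .
\end{equation*}
Hence $\Phi$ is a supersolution of $\partial_t w+\cL w=0$, and it is a subsolution of the same linear equation perturbed by a quadratic term that is small wherever $\Phi$ is small.

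I will build the barriers as $w=e^{-\mu_1 t}\bigl(\zeta(t)\phi_1+\nu|\zeta'(t)|\,\psi(r)\bigr)$. The correction $\psi$ is $r^{-4}\log r$ for the lower bound and $-r^{-4}\Gamma$ for the upper bound. The point is that in $N=10$ the operator $-\Delta-8r^{-2}$ has the double indicial root $r^{-4}$, with second solution $r^{-4}\log r$, and $\phi_1\sim c\,r^{-4}\log(1/r)$ near $0$. A slowly varying amplitude $\zeta$ in front of $\phi_1$ therefore produces an error $-\zeta'\phi_1$ that can be absorbed by the correction term, because $(-\Delta-8r^{-2})(r^{-4}\log r)=0$ up to lower order while $\mu_1 r^{-4}\log r$ and $\partial_t$ terms are available.

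The computation goes as follows. First I compute $(\partial_t+\cL)w$. The $\phi_1$ part gives $e^{-\mu_1t}\zeta'\phi_1$, and the correction gives $\nu e^{-\mu_1 t}\bigl[(|\zeta'|)'\psi+|\zeta'|(\cL-\mu_1)\psi\bigr]$. Next I check that, with $\nu$ as in \eqref{defb} and the ODEs defining $\zeta_1,\zeta_2$ in Lemma \ref{zetalem}, these combine with the correct sign in the inner region $r\lesssim$ (scale where $\Gamma$ regularizes). For the upper barrier this region is $r\sim e^{-\zeta_1/2}$ or similar, where $\Gamma$ caps $-\log r$. There the regularization must produce a positive contribution large enough to dominate the nonlinearity and the lower-order errors. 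This is presumably exactly what the ODE for $\zeta_1$ encodes: it is a solvability/flux condition that matches the inner profile to the outer expansion, and this is where $b$ enters. For the subsolution, the extra term $-Me^{-\mu_1 t}|\zeta_2'|\phi_1$ absorbs the bounded errors coming from $e^{-\Phi}-1+\Phi$ and from $\mu_1\psi-\cL\psi$ away from $0$; since $\phi_1$ is positive, that is allowed.

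The last step is the initial and boundary ordering. At $r=1$ both barriers vanish or have the correct sign, using $\phi_1=0$ and $\log 1=0$. At $t=T$ I use the convergence $\Phi\to0$ together with a pointwise bound $\Phi\le C\phi_1$-type estimate near $0$, which follows from comparison with $u_0\ge\phi_0$, and I enlarge $C$ or $M$ accordingly. Near $r=0$, $\Phi$ is singular like $u^*$, so the comparison is done on $B_1\setminus B_\ep$ with $\ep\to0$, or via a weak comparison in the energy space $H^1_0$, where $\cL\ge0$ by stability.

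I expect the main obstacle to be the upper bound near the origin. Since $\Phi\approx u^*$ there, the quadratic term is not small, and the supersolution must be sharp at the transition scale. Getting the exact constant $\nu$ and the law for $\zeta_1$ from matching the regularized $\Gamma$ with the inner (self-similar steady-state) behaviour is the delicate step. I would first try to choose $\Gamma(r,t)=-\log\max(r,\rho(t))$ smoothed, with $\rho(t)$ fixed by $\zeta_1$, and verify the supersolution inequality separately in $r<\rho$, using the convexity of $e^{-\Phi}$ and the fact that $u\le$ the singular profile, and in $r>\rho$.
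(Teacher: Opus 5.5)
\textbf{Verdict: the overall strategy matches the paper's, but the proposal has a sign error that reverses the role of the nonlinear term, and it lacks the elliptic corrector needed in the outer region.}

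Your strategy is the paper's: comparison with barriers $e^{-\mu_1 t}(\zeta\phi_1\pm\nu|\zeta'|r^{-4}\cdot(\log\text{-type}))$ on $B_1\setminus B_\ep$, with $\zeta$ fixed by matching at an inner scale. The two problems are as follows.

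\textbf{The sign and the inner region.} Subtracting the equations gives $\partial_t\Phi+\cL\Phi=-\lambda^*e^{u^*}\cF(\Phi)\le 0$, where $\cF(s)=e^{-s}+s-1\ge 0$. So $\Phi$ is a \emph{sub}solution of the linear problem, and the absorption term helps the supersolution and hurts the subsolution. (Also $\lambda^*e^{u^*}\sim 16r^{-2}$; with $8r^{-2}$ the root $r^{-4}$ would not be double.)

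Your plan for the upper bound is therefore reversed. Cutting off $-\log r$ at scale $\delta$ and multiplying by $\zeta_1'<0$ produces an error of size $|\zeta_1'|\delta^{-2}$ on $B_{2\delta}$. This error is negative on average: in the variables $\Psi=e^{\mu_1t}r^4\Phi$ on $B_1^2$ it approximates $2\pi\nu_0\zeta_1'\delta_0$. What beats it is the nonlinear term. That term is effective only where the barrier is large, i.e. $r^{-4}e^{-\mu_1t}\Psi_1\ge 2$ on $B_{2\delta}$, so that $\cF(s)\ge s/2$ gives $-\lambda^*e^{u^*}r^4e^{\mu_1t}\cF\le -4L|\zeta_1'|r^{-2}$. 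This requirement fixes $\delta^4=-e^{-\mu_1t}\zeta_1'$. Through $\Psi_1(\delta)\approx L|\zeta_1'|$ it then yields the law $-\zeta_1/\zeta_1'=b\mu_1t-b\log(-\zeta_1')+b\log l+o(1)$ with $b=\nu_0/(4\psi_1(0))$.

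For the lower bound the nonlinearity must instead be neutralized. The paper's barrier is negative on $B_{\rho/2}$, so $\cF(\cdot_+)=0$ there. The remaining quadratic term $R$ is absorbed by a separate elliptic corrector $\upsi$.

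\textbf{The outer region: the missing corrector.} The terms you list cannot close the argument here. In the 2D variables, $\psi_1=c\,r^4\phi_1$ with $\psi_1(0)>0$. So $\phi_1\sim c\,r^{-4}$, not $r^{-4}\log(1/r)$; the logarithmic branch is exactly the one outside $\ker\cE$. Away from the origin, the residual of $\zeta\psi_1+\nu\zeta'(-\log r)$ is $\zeta'\bigl(\psi_1-\nu(K+\mu_1)(-\log r)\bigr)$. By \eqref{pnu0} its projection on $\psi_1$ is $\zeta'(1-\nu/\nu_0)$. For the correct value $\nu=\nu_0$ it is therefore orthogonal to $\psi_1>0$, hence changes sign, and it has size $|\zeta'|(1+|\log r|)$.

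Adding $\pm M|\zeta'|\phi_1$ does not help. Since $\phi_1\in\ker(\cL-\mu_1)$, such a term only contributes $\mp Me^{-\mu_1t}\zeta''\phi_1$, which is smaller by a negative power of $t$. For your choice $-M|\zeta_2'|\phi_1$ the contribution is $+Me^{-\mu_1t}\zeta_2''\phi_1>0$, which is the wrong sign for a subsolution.

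What is missing is the Fredholm corrector. Solve $\cE\xi=-\psi_1+\nu_1(K+\mu_1)\Gamma$ in $\psi_1^\perp$. For the subsolution, solve instead $\cE\xi=-\psi_1-\nu_2(K+\mu_1)\log r+2\nu_2\frac{\zeta_2''}{\zeta_2'}\log r$; the last term flips the sign of the $\zeta_2''\log r$ contribution. Solvability fixes $\nu_1(t),\nu_2(t)\to\nu_0$, and one proves $|\xi|\le C\psi_1$ (Lemma \ref{well-def-lem}).

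Only once $\zeta'\xi$ is included does the residual reduce to $\zeta''$-terms. Their sign is then secured by the shifts $-\frac{M_2}{2}|\zeta_1'|\psi_1$ and $+l_2|\zeta_2'|\psi_1$. The term $-M|\zeta_2'|\phi_1$ in the statement is only an a posteriori error (from the time shift and $\nu_2\neq\nu_0$), not a device for absorbing residuals.
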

\begin{remark}
\rm{The functions $\zeta_1$ and $\zeta_2$ have the following asymptotics 
\begin{align*}
\zeta_1\simeq\zeta_2\simeq t^{-\frac{1}{\mu_1 b}} & \quad \quad \text{if $f\not \equiv f_H$ in $B_{1}$,}\\
\zeta_1\simeq e^{-(2t/b)^{1/2}}t^{1/2}, \quad \zeta_2\simeq  e^{-(2t/b)^{1/2}}t^{-5} &\quad \quad \text{if $f\equiv f_H$ in $B_1$}
\end{align*}
as $t\to\infty$, where $b>0$ is defined in \eqref{defb}.}
\end{remark}
Theorem \ref{intro-thm-3} shows that, in contrast to the case $N\ge 11$, the leading-order expansion of $\Phi $
contains not only the first-eigenfunction component but also an additional logarithmic-type term.
The difference in the asymptotic profile yields the grow-up rate obtained in Theorem \ref{intro-thm-2}, which is essentially different from the case $N \ge 11$. The proof of Theorem \ref{intro-thm-3} is based on constructing super/sub-solutions that 
reflect the relevant asymptotic structure of the solution in the outer region. We emphasize that the construction is considerably more delicate than in the case $N\ge 11$, due to the presence of logarithmic-type term in the leading-order behavior of $\Phi$. The novelty of this paper lies in rigorously clarifying the distinctive grow-up mechanism in the critical dimension $N=10$ through the construction of super/sub-solutions.

The paper is organized as follows. In Section 2, we recall known results about a singular radial stationary solution and properties of $\mathcal{L}$. In addition, we define $b$ and $\nu$ and study the properties of $b$. In Section 3, we show Theorems \ref{intro-thm-2} and \ref{intro-thm-3} by constructing the super/sub-solutions.

\section{Preliminaries}
In this section, we quote some results in \cite{KO26} and introduce some lemmata, which is used in order to obtain the grow-up rate.
\subsection{Singular solution for stationary problem}
We first focus on the radial singular stationary solution of \eqref{eq-intro-1}. Here, we say that $(\lambda_{*}, U_{*}(r))$ is a radial singular stationary solution for \eqref{eq-intro-1} if $U_{*}\in C^{2}_{\mathrm{loc}}(0,1]$ satisfies \eqref{eq-intro-1} in $(0,1]$ with $\lambda=\lambda^{*}$ such that $U_{*}(r)\to \infty$ as $r\to 0$. By combining Propositions 1.1, 1.2, Remark 2.13 and Theorem 1.3 
obtained in \cite{KO26}, we obtain the following

\begin{prop}
\label{prop-2-1}
Assume that \eqref{katei-f}. Then, \eqref{eq-intro-1} admits the unique singular radial stationary solution $(\lambda_{*}, U_{*})$. Moreover, $U_{*}\in C^2_{\mathrm{loc}}(0,1]\cap H^{1}_{0}(B_1)$ and $U_{*}$ is
\begin{enumerate}
    \item[(i)] unstable if $N\le 9$;
    \item[(ii)] stable and $u^{*}=U_{*}$ if $N\ge 10$ and $f\le f_H$ in $B_{1}$;
    \item[(iii)] unstable if $N\ge 10$, $f\ge f_H$ in $B_{1}$ and $f\not \equiv f_H$.
\end{enumerate}
In addition, for the case $N\ge 10$, we have 
\begin{equation*}
    K(r):= \lambda_{*}e^{U_{*}}-\frac{2N-4}{r^2}\in C^{0}[0,1].
\end{equation*}
\end{prop}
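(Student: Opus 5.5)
The statement collects results of \cite{KO26}, so the plan is mostly to assemble them. The one part that needs an argument is the regularity $K\in C^0[0,1]$, and I would prove it directly from the radial ODE.

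For existence and uniqueness of $(\lambda_*,U_*)$ and $U_*\in C^2_{\mathrm{loc}}(0,1]\cap H^1_0(B_1)$, I would use the Emden--Fowler transformation $s=-\log r$, $w(s)=U_*(r)+2\log r$. This turns the radial equation into an autonomous-plus-perturbation ODE, $w''-(N-2)w'+\lambda e^{w}-2(N-2)=-e^{-2s}f(e^{-s})$. The perturbation is exponentially small as $s\to\infty$ because $f$ is Lipschitz, hence bounded. A singular solution then corresponds to an orbit converging to the equilibrium $e^{w}=(2N-4)/\lambda$. I would build it by a fixed-point argument on $[s_0,\infty)$ and continue it down to $s=0$. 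The boundary condition $U_*(1)=0$ fixes $\lambda_*$. Uniqueness would follow from a separation/intersection-number argument for the one-parameter family. This is the content of Propositions 1.1--1.2 in \cite{KO26}, and I would cite them.

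For the stability trichotomy, I would compare the potential with the Hardy constant. Since $\lambda_*e^{U_*}\sim (2N-4)/r^2$ at the origin, instability for $N\le 9$ follows from $2N-4>(N-2)^2/4$ in that range: test functions localized near $0$ make the quadratic form negative. For $N\ge 10$, stability versus instability is decided by comparison with the explicit family $U_h$ at $h=H$. Here $f\le f_H$ gives, via the separation result, $U_*$ lying below or intersecting $U_H$ in the appropriate way, which transfers stability. The identity $u^*=U_*$ in the stable case follows because the extremal solution is the unique stable weak solution. I would quote Theorem 1.3 and Remark 2.13 of \cite{KO26} for these steps.

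For $K\in C^0[0,1]$, continuity on $(0,1]$ is immediate, so the issue is the limit at $r=0$. I need $\lambda_*e^{U_*}r^2-(2N-4)=O(r^2)$, that is, $e^{w(s)}-(2N-4)/\lambda_*=O(e^{-2s})$ with an actual limit of the ratio. To get this, linearize the ODE at the equilibrium, where the linearized operator is $z''-(N-2)z'+(2N-4)z$. Its characteristic roots have real part $(N-2)/2>0$ when $N\ge 10$. The convergent singular orbit is therefore determined purely by the forcing, with no homogeneous component, and the variation-of-constants formula gives $z(s)=c e^{-2s}+o(e^{-2s})$. Writing $e^{-2s}f(e^{-s})=e^{-2s}(f(0)+O(e^{-s}))$ shows that the limit exists.

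I expect the main obstacle to be the resonance case. At $N=10$ the roots are $4,4$, and one must check that the exponent $-2$ is not resonant, so that no logarithmic factor appears. It is not, since $4+2(8)+16\ne 0$. The nonlinear remainder $O(z^2)$ must also be absorbed, which a contraction argument in the weighted space $\sup e^{2s}|z|$ handles.
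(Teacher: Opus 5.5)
Your plan is correct. It coincides with the paper except on one point. The paper proves nothing here: existence, uniqueness, the regularity $U_*\in C^2_{\mathrm{loc}}(0,1]\cap H^1_0(B_1)$, the stability trichotomy, $u^*=U_*$, and also $K\in C^0[0,1]$ are all quoted from \cite{KO26} (Propositions 1.1, 1.2, Remark 2.13 and Theorem 1.3 there). You quote the same sources for everything except $K$, where you give a self-contained Emden--Fowler argument instead.

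That argument is sound, and it gives more than the citation does:
\begin{itemize}
\item The characteristic polynomial at $-2$ equals $4(N-1)$, so it never vanishes and there is no resonance. You therefore get $z(s)=\frac{f(0)}{4(N-1)}e^{-2s}+o(e^{-2s})$, hence $K(0)=\frac{(N-2)f(0)}{2(N-1)}$. For $f=f_h$ this gives $K(0)=h$, consistent with $K\equiv h$.
\item The argument works for every $N\ge 3$, because it only uses that both roots have positive real part.
\end{itemize}

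Three points need tidying.
\begin{itemize}
\item The forcing has the opposite sign: $w''-(N-2)w'+\lambda e^{w}-2(N-2)=+e^{-2s}f(e^{-s})$. You can check this on $U_h$, where $\lambda_h e^{w}=2N-4+he^{-2s}$.
\item For $N>10$ the roots are the distinct reals $\gamma$ and $N-2-\gamma$; they do not have real part $(N-2)/2$. Only their positivity matters, so nothing breaks.
\item More substantively, $w(s)\to\log\bigl((2N-4)/\lambda_*\bigr)$ for the given $U_*$ does not follow from $U_*\to\infty$ alone. You must either take it from the asymptotics in \cite{KO26}, or identify $U_*$ with your fixed-point orbit through uniqueness. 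The second route also needs the shooting step that produces $U(1)=0$. Your Hardy-type instability argument for $N\le 9$ rests on this same asymptotic input.
\end{itemize}

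Finally, once $z\to 0$ is known, you do not need a contraction. Variation of constants with the decaying kernel on $[s,\infty)$, together with absorption of the $O(z^2)$ term, gives $\sup_{\sigma\ge s}|z(\sigma)|\le Ce^{-2s}$ directly.
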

In the following, we only consider the case $N\ge 10$ and $f\le f_H$ in $B_1$. Thanks to Proposition \ref{prop-2-1}, we have $\cL:=-\Delta -\lambda^{*}e^{u^{*}}=-\Delta-\lambda_* e^{U_{*}}$. By studying the global behavior of a singular solution via the separation results, the authors \cite{KO26} obtained the following properties of $\cL$. 
\begin{prop}[see \cite{KO26}]
\label{evprop}
Assume that \eqref{katei-f}, $N\ge 10$ and $f\le f_H$ in $B_1$. Then, we obtain the following.
\begin{enumerate}
    \item[(i)] The operator $\cL$ has a countable sequence of discrete eigenvalues, and the corresponding eigenfunctions form an orthonormal basis of $L^2(B_1)$. Moreover, the first eigenvalue is simple.
    \item[(ii)]  The restriction of $\cL$ to $L^{2}_{\mathrm{rad}}(B_1)$ has an orthonormal basis of eigenfunctions $\{\phi_k\}_{k\in \N}$ with corresponding eigenvalues $\{\mu_k\}_{k\in \N}$. Each eigenspace is one-dimensional. Thus,
    after the normalization $\phi_k(r)>0$ near $r=0$, $\phi_k$ is uniquely determined. Moreover, $\mu_1$ and $\phi_1$
    coincide with the principal eigenvalue and a corresponding eigenfunction of $\cL$ in $L^2 (B_1)$, respectively. Furthermore, $\phi_1>0$ in $B_1$.
    \item[(iii)] Let $m=N+2\gamma$, where $\gamma$ is that in \eqref{defgam}. We define
    \begin{equation*}
   \psi_k(r):=(\frac{N\omega_N}{m\omega_m})^{1/2}r^{\gamma}\phi_k,
    \end{equation*}
    where $\omega_{N}$ is the volume of the $N$-dimensional unit ball.
    Then, $\psi=\psi_k$
    is an orthonormal basis in $L^{2}_{\mathrm{rad}}(B_{1}^m)$ 
    for the following eigenvalue problems
    \begin{equation*}
    -\Delta\psi - K(r) \psi=\mu\psi \quad \text{in $B_{1}^m$} \quad \psi=0 \quad \text{on $\partial B_{1}^m $}
    \end{equation*}
    with $\mu=\mu_k$.
 Moreover, $\psi_k\in C^2(\overline{B_{1}^m})$ and $\psi_1>0$, $\psi_1'\le 0$ in $B_{1}^m $.
\item[(iv)]  In the case $f=f_h$, we have $\mu_1=H-h$. Moreover, if $f_1\le f_2\le f_H$ and $f_2\not\equiv f_H$ in $B_1$, then $\mu_1(f_1)\ge \mu_1(f_2)> 0$. Finally, $\mu_1\downarrow 0$ as $f\uparrow f_H$ uniformly in $B_1$.  
\item[(v)] The following maps are continuous:
\[\mathrm{Lip}[0,1]\cap \{f\le f_H \} \ni f\mapsto \mu_1 \in [0,\infty), \h \mathrm{Lip}[0,1]\cap \{f\le f_H \} \ni f\mapsto  \psi_1(0) \in (0,\infty).
\]
\end{enumerate}
\end{prop}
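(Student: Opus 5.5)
The statement immediately above is Proposition \ref{evprop}, which is quoted from \cite{KO26}. The plan is to recover it from the transformation in (iii): it turns the singular operator $\cL$ on $B_1\subset\R^N$ into a Schr\"odinger operator with bounded potential on the higher-dimensional ball $B_1^m$. Items (i), (ii), (iv) and (v) then follow from standard spectral theory for that regular operator.

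\textbf{Step 1: the transformation.} For radial $\phi$, set $\psi=r^{\gamma}\phi$. Proposition \ref{prop-2-1} gives
\[
\lambda_*e^{U_*}=\frac{2N-4}{r^2}+K(r), \qquad K\in C^0[0,1].
\]
Compute $-\Delta_N\phi-\lambda_*e^{U_*}\phi$ in terms of $\psi$. The exponent $\gamma$ solves $\gamma^2-(N-2)\gamma+(2N-4)=0$, which is exactly \eqref{defgam}. This choice cancels the $r^{-2}$ term and yields
\[
r^{\gamma}\bigl(\cL\phi\bigr)=-\psi''-\frac{N-1+2\gamma}{r}\psi'-K\psi,
\]
which is the radial Laplacian in dimension $m=N+2\gamma$. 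The $L^2$ norms match up to the factor $N\omega_N/(m\omega_m)$, since
\[
\int_{B_1^N}\phi^2\,dx=N\omega_N\int_0^1 r^{N-1}\phi^2\,dr=\frac{N\omega_N}{m\omega_m}\int_{B_1^m}\psi^2\,dy .
\]
The quadratic forms also correspond. Here I would use the Hardy inequality with constant $(N-2)^2/4\ge 2N-4$, valid for $N\ge10$, to justify the identification of form domains, including the domain of the Friedrichs extension in $H^1_0$.

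\textbf{Step 2: spectral consequences.} On $B_1^m$ the operator $-\Delta-K$ with $K$ bounded has compact resolvent. Hence the radial sector has discrete eigenvalues and an orthonormal basis of eigenfunctions. Each eigenspace is one-dimensional by ODE uniqueness at $r=0$ for regular solutions. Elliptic regularity gives $\psi_k\in C^2(\overline{B_1^m})$. The first eigenfunction is positive by the Krein--Rutman or variational argument, and $\psi_1'\le0$ follows from $(r^{m-1}\psi_1')'=-r^{m-1}(K+\mu_1)\psi_1$ near the origin, continued as in \cite{KO26}. For the nonradial modes, expand in spherical harmonics. Angular mode $\ell$ adds a positive term $\ell(\ell+N-2)/r^2$, so each angular sector again has compact resolvent after the same transformation. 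The lowest eigenvalue occurs at $\ell=0$, which gives (i) and the identification in (ii) of the principal eigenpair with the radial one.

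\textbf{Step 3: (iv) and (v).} For $f=f_h$ the explicit solution gives $\lambda_he^{U_h}=r^{-2}(2N-4+hr^2)^{-1}\cdot$const. I would check that $K\equiv h$ in this case up to the normalization. For $N=10$ one has $m=18$, and the eigenvalue problem becomes $-\Delta\psi=(\mu+h)\psi$. Comparing with the definition of $H$ gives $\mu_1=H-h$. I expect this identification, including the $N=10$ versus $N\ge11$ conventions in \eqref{defH}, to be the main obstacle.

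Monotonicity in $f$ comes from comparison of singular solutions, as in the separation results of \cite{KO26}. If $f_1\le f_2$, then $K_{f_1}\le K_{f_2}$, and the Rayleigh quotient gives $\mu_1(f_1)\ge\mu_1(f_2)$. Continuity of $f\mapsto K$ in $C^0$, again from \cite{KO26}, gives continuity of $\mu_1$ and of the normalized $\psi_1(0)$ by standard perturbation theory. Strict positivity $\mu_1(f_2)>0$ for $f_2\not\equiv f_H$ follows from strict monotonicity via the unique continuation of $\psi_1$.
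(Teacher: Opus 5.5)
The paper gives no proof of this proposition; it is quoted from \cite{KO26}. So your argument has to stand on its own, and it breaks at Step 1. With $\phi=r^{-\gamma}\psi$ a direct computation gives
\[
\Delta_N\phi=r^{-\gamma}\Big(\psi''+\frac{N-1-2\gamma}{r}\psi'+\frac{\gamma^2-(N-2)\gamma}{r^2}\psi\Big).
\]
Hence $r^{\gamma}\cL\phi=-\psi''-\frac{N-1-2\gamma}{r}\psi'-K\psi$, which is the radial operator in dimension $m=N-2\gamma$, not $N+2\gamma$.

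Your own norm identity shows the same thing. Since $\int_{B_1^m}\psi^2\,dy=m\omega_m\int_0^1 r^{m-1+2\gamma}\phi^2\,dr$, it reproduces the weight $r^{N-1}$ only if $m=N-2\gamma$. So $m=N+2\gamma$ in the statement is a misprint. For $N=10$ one has $m=2$, which is what the rest of the paper actually uses: all computations are on $B_1^2$, $\psi_1=J_0(\sqrt{H}r)/(\sqrt{\pi}J_1(\sqrt{H}))$, and $\nu=\nu_0\sqrt{m\omega_m/(N\omega_N)}$ with $m=2$.

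By forcing the computation to fit the misprint, you created the ``main obstacle'' of Step 3 yourself. On $B_1^{18}$ the first radial Dirichlet eigenvalue is $j_{8,1}^2$, not $H=j_{0,1}^2$, so $\mu_1=H-h$ cannot come out. Take the correct $m$ and the correct formula $\lambda_he^{U_h}=r^{-2}(2N-4+hr^2)=\frac{2N-4}{r^2}+h$ (you wrote $a$ instead of $1/a$). Then $K\equiv h$ exactly, and $\mu_1=H-h$ is immediate for $N=10$. For $N\ge 11$, the same substitution with $K=0$ identifies $H$ with the first radial Dirichlet eigenvalue in dimension $m$.

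Your functional setting is also wrong in exactly the case the paper needs. For $N=10$ the Hardy constant $(N-2)^2/4=16$ equals $2N-4$. So the quadratic form of $\cL$ is not coercive on $H^1_0(B_1)$, and its form domain is strictly larger than $H^1_0$. Indeed $\phi_1\sim c\,r^{-4}$ at the origin, so $\int_{B_1}|\nabla\phi_1|^2\,dx\simeq\int_0^1 r^{-1}\,dr=\infty$. What replaces your Hardy argument is the ground-state identity
\[
\int_{B_1^N}\Big(|\nabla\phi|^2-\frac{2N-4}{r^2}\phi^2\Big)dx=\frac{N\omega_N}{m\omega_m}\int_{B_1^m}|\nabla\psi|^2\,dy,\qquad \phi=r^{-\gamma}\psi,
\]
valid for radial $\phi\in C_c^\infty(B_1\setminus\{0\})$. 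The $\psi^2/r^2$ terms cancel because $\gamma^2-(N-2)\gamma+2N-4=0$. This identity identifies the radial form domain with $r^{-\gamma}H^1_{0,\mathrm{rad}}(B_1^m)$. After that, compactness, simplicity and $C^2$ regularity of $\psi_k$ are standard in dimension $m$.

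Finally, note where the substantive content of (iv), (v) and $\psi_1'\le0$ lies. The bound $\psi_1'\le0$ holds on all of $(0,1)$ from $(r^{m-1}\psi_1')'=-r^{m-1}(K+\mu_1)\psi_1\le0$, once $K+\mu_1\ge0$. The real inputs are the comparison $f_1\le f_2\Rightarrow K_{f_1}\le K_{f_2}$ and the continuity of $f\mapsto K$. These are statements about singular solutions of $-\Delta\log w=w-f$, not spectral theory, and your proposal only cites them.
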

\begin{remark}
\rm{Proposition \ref{evprop} is shown by combining 
Proposition 2.12, 
Lemmata 2.9 and 2.21 in \cite{KO26}. We refer to \cite[Appendix B]{KO26} for the definitions of $m$-dimensions.}
\end{remark}

\subsection{Definitions and properties of $b$ and $\nu$}
\label{defsec}
Throughout this subsection, we assume that \eqref{katei-f},
$N\ge 10$ and $f\le f_H$ in $B_1$. We define
\begin{equation}
\label{defb}
\nu= \frac{1}{2\pi\psi_1(0)}\sqrt{\frac{\omega_2}{5\omega_{10}}}, \quad \nu_0=\frac{1}{2\pi \psi_1(0)},\quad b=\frac{\nu_0}{4\psi_1(0)}=\frac{1}{8\pi \psi_{1}(0)^2}.
\end{equation}
Then, we obtain the following.
\begin{lem}
\label{blem}
\begin{enumerate}
    \item[(i)] The following identity is satisfied:
    \begin{equation}
    \label{pnu0}
        -\int_{B_{1}^{2}}(K+\mu_1)\log r\psi_1\,dx=\frac{1}{\nu_0}.
    \end{equation}
    \item[(ii)] The following map is continuous:
    \[\mathrm{Lip}[0,1]\cap \{f\le f_H \} \ni f\mapsto b \in (0,\infty).\]
In particular, there exist positive constants $c<C$ independent of $f$ such that $c<b<C$ for any $f$ satisfying \eqref{katei-f} and $f\le f_H$ in $B_1$.
    \item[(iii)] When $f=f_h$ with some $h\ge 0$, then $\mu_1=H-h$ and $b=J_{1}^{2}(\sqrt{H})/8$, where $J_1$ is the Bessel function of the first kind of order $1$. 
\end{enumerate}
\end{lem}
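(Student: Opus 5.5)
The plan is to work entirely in the transformed two-dimensional picture of Proposition \ref{evprop}(iii). For $N=10$ we have $\gamma=4$, and substituting $\phi=r^{-\gamma}\psi$ into the radial operator $-\partial_r^2-\frac{N-1}{r}\partial_r-\frac{2N-4}{r^2}$ removes the inverse-square term, because $\gamma^2-(N-2)\gamma+(2N-4)=16-32+16=0$. The first-order coefficient becomes $(N-1-2\gamma)/r=1/r$, so the relevant dimension (the $m$ of Proposition \ref{evprop}(iii), cf.\ \cite[Appendix B]{KO26}) is $2$, consistent with the integral over $B_1^2$ in \eqref{pnu0}. Thus $\psi_1\in C^2(\overline{B_1^2})$ is radial, positive and nonincreasing, and it satisfies
\[
-\Delta\psi_1=(K+\mu_1)\psi_1 \ \text{in } B_1^2,\qquad \psi_1=0 \ \text{on } \partial B_1^2,\qquad \|\psi_1\|_{L^2(B_1^2)}=1 .
\]
Here $K\in C^0[0,1]$ by Proposition \ref{prop-2-1}.

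For (i), I use that $\log r$ is the fundamental solution in $\R^2$ vanishing on $\partial B_1^2$: $\Delta\log r=2\pi\delta_0$. I apply Green's second identity to $\psi_1$ and $\log r$ on the annulus $B_1^2\setminus B_\varepsilon$. On $\partial B_1^2$ both functions vanish, so the outer boundary terms disappear. On $\partial B_\varepsilon$ the term $\int\psi_1\partial_r\log r$ tends to $2\pi\psi_1(0)$, while $\int\log r\,\partial_r\psi_1=O(\varepsilon\log\varepsilon)\to0$ since $\psi_1\in C^2$. Letting $\varepsilon\to0$ gives
\[
\int_{B_1^2}\log r\,\Delta\psi_1\,dx=2\pi\psi_1(0).
\]
Replacing $\Delta\psi_1$ by $-(K+\mu_1)\psi_1$ yields $-\int_{B_1^2}(K+\mu_1)\log r\,\psi_1\,dx=2\pi\psi_1(0)=1/\nu_0$, which is \eqref{pnu0}. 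The integrand is integrable because $\log r\in L^1_{\mathrm{loc}}(\R^2)$ and $K$, $\psi_1$ are bounded.

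For (ii), continuity of $f\mapsto b=1/(8\pi\psi_1(0)^2)$ follows directly from Proposition \ref{evprop}(v) together with $\psi_1(0)>0$. For the uniform bounds I bound $\psi_1(0)$ from both sides.
\begin{itemize}
\item Lower bound: since $\psi_1$ is radially nonincreasing, $1=\int_{B_1^2}\psi_1^2\le\pi\psi_1(0)^2$. Hence $\psi_1(0)\ge\pi^{-1/2}$ and $b\le 1/8$.
\item Upper bound: by (i) and Cauchy--Schwarz, $2\pi\psi_1(0)\le\|K+\mu_1\|_{L^\infty}\|\log r\|_{L^2(B_1^2)}\|\psi_1\|_{L^2}$. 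So it suffices to bound $\mu_1$ and $\|K\|_{L^\infty}$ uniformly over $0\le f\le f_H$.
\item The bound $0\le\mu_1\le\mu_1(0)$ comes from the monotonicity in Proposition \ref{evprop}(iv).
\end{itemize}
For $K$, I would use comparison of the singular solutions between the extreme cases $f=0$ and $f=f_H$. By uniqueness of singular radial solutions (Proposition \ref{prop-2-1}), we have $\lambda_*(f_H)\le\lambda_*\le\lambda_*(0)$. I would also expect two-sided bounds $|U_*+2\log r-\log(\tfrac{2N-4}{\lambda_*})|\le C$ near $0$ uniformly in $f$, obtained from the ODE analysis in \cite{KO26}. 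I expect this uniform control of $K$ to be the main obstacle. If a direct citation is unavailable, I would re-run the ODE argument of \cite{KO26} in Emden--Fowler variables, whose constants depend only on $\|f\|_{\mathrm{Lip}}\le\|f_H\|_{\mathrm{Lip}}$ and on $f\ge0$.

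For (iii), I first identify the singular solution. By uniqueness in Proposition \ref{prop-2-1}, $(\lambda_*,U_*)=(\lambda_h,U_h)$. A direct computation gives
\[
\lambda_he^{U_h}=\frac{\lambda_h}{r^2a(r)\lambda_h}=\frac{2N-4+hr^2}{r^2}=\frac{16}{r^2}+h,
\]
so $K\equiv h$. With $\mu_1=H-h$ from Proposition \ref{evprop}(iv), the eigenfunction equation reduces to $-\Delta\psi_1=H\psi_1$ in $B_1^2$. Hence $\psi_1=cJ_0(\sqrt H r)$, where $\sqrt H$ is the first zero of $J_0$. The standard identity $\int_0^1J_0(jr)^2r\,dr=\tfrac12J_1(j)^2$ at a zero $j$ of $J_0$ gives
\[
1=\int_{B_1^2}\psi_1^2\,dx=\pi c^2J_1(\sqrt H)^2 .
\]
Therefore $\psi_1(0)=c=\pi^{-1/2}/J_1(\sqrt H)$, and $b=1/(8\pi c^2)=J_1^2(\sqrt H)/8$.
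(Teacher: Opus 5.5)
Your arguments for (i), (iii), and the continuity claim in (ii) are correct and match the paper. This covers Green's identity against $\log r$ giving $2\pi\psi_1(0)=1/\nu_0$, and the identification $K\equiv h$, $\mu_1=H-h$, $\psi_1=J_0(\sqrt H r)/(\sqrt\pi J_1(\sqrt H))$. It also covers continuity of $b=1/(8\pi\psi_1(0)^2)$ from Proposition \ref{evprop}(v). Your elementary bound $b\le 1/8$, from $\psi_1'\le0$ and $1=\int_{B_1^2}\psi_1^2\le\pi\psi_1(0)^2$, is also valid. The paper instead obtains both uniform bounds from continuity plus a compactness argument (Arzel\`a--Ascoli).

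The gap is the other half of the ``in particular'' claim: the uniform lower bound $b\ge c>0$, i.e.\ $\sup_f\psi_1(0)<\infty$. You reduce it to a uniform bound on $\lVert K\rVert_{L^\infty}$ but never prove one. This input cannot be avoided. Spectral information alone ($\mu_1\ge0$, $\lVert\psi_1\rVert_{L^2}=1$, monotonicity) does not bound $\psi_1(0)$. For instance, a potential concentrated in $B_\varepsilon$ with $\mu_1=0$ forces $\psi_1=-c\log r$ outside $B_\varepsilon$ with $c\approx(2/\pi)^{1/2}$, hence $\psi_1(0)\gtrsim\log(1/\varepsilon)$.

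Moreover, your sketch for bounding $K$ fails as written, for two reasons.
\begin{enumerate}
\item[(a)] The ordering is reversed. We have $\lambda_*(0)=2N-4=16$ and $\lambda_*(f_H)=16+H$, so the correct statement is $16\le\lambda_*\le16+H$. It follows from monotonicity of $\lambda^*$ in $f$ (a sub/super-solution argument) together with $\lambda_*=\lambda^*$, not from uniqueness of the singular solution.
\item[(b)] Since $\lambda_*e^{U_*}=16r^{-2}+K$, one has $U_*+2\log r-\log(16/\lambda_*)=\log(1+Kr^2/16)$. A bound on this by a constant therefore says nothing about $K$ near $r=0$. What you need is $\lvert U_*+2\log r-\log(16/\lambda_*)\rvert\le Cr^2$ uniformly in $0\le f\le f_H$, which is exactly the missing estimate.
\end{enumerate}

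To close the proof, you must either derive such a uniform second-order estimate from the ODE for $U_*$, or follow a compactness route. In the latter case, Arzel\`a--Ascoli cannot be applied to the parameter set $\{f\in\mathrm{Lip}[0,1]:0\le f\le f_H\}$ itself, since it is not equicontinuous. Some uniform control of the family $\{U_*\}$ or $\{K_f\}$ is still required there.
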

\begin{proof}
Since $-\Delta (-\log r)=2\pi \delta_0$ in $B_{1}^2$, it follows from the Green's identity that
\begin{equation*}
-\int_{B_{1}^{2}}(K+\mu_1)\log r\psi_1\,dx= \int_{B_{1}^{2}}\log r \Delta \psi_1\,dx= \int_{B_{1}^{2}}\Delta (\log r)\psi_1\,dx=2\pi\psi_1(0)=\frac{1}{\nu_0}.
\end{equation*}
Hence, \eqref{pnu0} holds. Moreover, the assertion  (ii) follows from Proposition \ref{evprop} and the Arzel\`a-Ascoli theorem.

We now prove the assertion (iii). For the case $f=f_h$, we have $K=h$ and $\mu_1=H-h$ by Proposition \ref{evprop}. Moreover, by the definition and fundamental properties of Bessel functions, we obtain 
\begin{equation*}
  \psi_1 (r) = \frac{J_{0}(\sqrt{H}r)}{\sqrt{\pi} J_{1}(\sqrt{H})}
\end{equation*}
because
\[
\lVert J_0(\sqrt{H}r)\rVert_{L^2}^2=2\pi\int_{0}^{1}J_0(\sqrt{H}r)^2 r\,dr=\pi J_{1}(\sqrt{H})^2,
\]
where $J_{\rho}$ is the Bessel function of the first kind of order $\rho$. 
Since $J_0(0)=1$, we have $b=J_{1}^{2}(\sqrt{H})/8$.
\end{proof}
\section{Grow-up rate}
In this section, we show Theorems \ref{intro-thm-2} and \ref{intro-thm-3}. 
Let $\lambda=\lambda^{*}$. We recall that $\phi_0$ and $u^{*}$ are radial. Hence, for each $u_0\in C^{0}(\overline{B_1})$ satisfying $\phi_0\le u_0\le u^{*}$ in $B_{1}$,
there exist radial functions $u_{0}^{(1)}$, $u_{0}^{(2)}\in C^{0}(\overline{B_1})$ such that $\phi_0\le u_{0}^{(1)}\le u_0\le u_0^{(2)}\le u^{*}$ in $B_1$. Then, by Proposition \ref{intro-prop-1} and a comparison principle,  the global solution $u$ satisfies $\phi_0\le u^{(1)}\le u\le u^{(2)}\le u^{*}$ in $B_1\times (0,\infty)$, where $u^{(i)}$ are the radial global solutions of \eqref{eq-intro-1} satisfying $u^{(i)}(r,0)=u_{0}^{(i)}(r)$ in $B_1$ for 
$i=1,2$. Therefore, it is sufficient to prove Theorems \ref{intro-thm-2} and \ref{intro-thm-3} under the assumption that $u_{0}$ is radially symmetric.

Throughout this section, we assume without loss of generality that \eqref{katei-f}, $N=10$, $0\le f\le f_H$ in $B_1$, $\lambda=\lambda^{*}$, $\phi_0\le u_0\le u^{*}$ in $B_1$, 
and $u_{0}\in C^{0}(\overline{B_1})$ is radially symmetric. 
Then, Proposition \ref{eq-intro-1} shows that $u$ is radially symmetric and $u\to u^{*}$ in $C^{2}_{\mathrm{loc}}(\overline{B_1}\setminus \{0\})$ as $t\to\infty$. Moreover, by a comparison principle, the strong maximum principle and the Hopf lemma, we deduce that $\Phi:=u^{*}-u$ satisfies 
\begin{equation}
\label{sita}
   c\psi_1<\Phi(r, 1)\quad \text{in $B_1$} \quad \text{and}\quad \Phi\ge 0 \quad \text{in $B_{1}\times (0,\infty)$} 
\end{equation}
with some $c>0$.
In addition, we remark that $0\le \Phi\le u^{*}-\phi_0$ in $B_1\times (0,\infty)$ by a comparison principle. Therefore, it follows from Proposition \ref{prop-2-1} that
\begin{equation}
\label{ue}
    r^4 \Phi(r,t)\to 0 \quad \text{in $C^{2}_{\mathrm{loc}}(\overline{B_1}\setminus \{0\})\cap C^{0}(B_1)$} \quad \text{as $t\to\infty$.}
\end{equation}

\subsection{Proof of Theorem \ref{intro-thm-3}}
In this subsection, we prove Theorem \ref{intro-thm-3} by constructing super/sub-solutions. For $\tau\in \R$, we apply the following transformation $\Psi(r,t,\tau)=e^{\mu_1 t}r^{4}\Phi(r, t+\tau)$. Then, $\Psi(r,t,\tau)$ satisfies
\begin{equation*}
(\partial_t+ \cE)\Psi=\lambda^{*}e^{u^{*}}e^{\mu_{1}t}r^{4}\cF(r^{-4}e^{-\mu_1 t}\Psi) \quad \text{in $B_{1}^2 \times (-\tau ,\infty)$,}
\end{equation*}
where 
\begin{equation*}
    \cE=-\Delta-K(r)-\mu_1 \quad \text{and} \quad \cF= e^{-u_+ }+u_+ -1.
\end{equation*}
We deduce from Proposition \ref{evprop} that
$\cE$ is degenerate. Moreover, we obtain 
\begin{equation}
0\le \cF'(u)\le \min\{u_{+},1\}, \hspace{2mm}
\label{pF}
\begin{cases}
\frac{u^2}{2e^2} \le \cF(u)\le \frac{u^2}{2}, &0\le u\le 2,\\
\frac{u}{2}\le \cF(u)\le u &u\ge 2.
\end{cases}
\end{equation}
Our goal is to find a super-solution with the following form
\begin{equation*}
\Psi_1=\zeta_1(t)\psi_1+(\nu_0+o(1))\zeta_{1}'(t)\Gamma(r)+O(|\zeta_1'|\psi_1) \quad \text{as $r\to\infty$}
\end{equation*}
and a sub-solution with the following form
\begin{equation*}
\Psi_2=\zeta_2(t)\psi_1-(\nu_0+o(1))\zeta_2'(t)\log r+O(|\zeta_2'|\psi_1) \quad \text{as $r\to\infty$},
\end{equation*}
where $\zeta_1$ and $\zeta_2$ are suitable functions and $\Gamma$ is a smooth regularization of $-\log r$.

First, we aim to define functions $\zeta_{1}(t)$ and $\zeta_2(t)$. In order to define $\zeta_1$ and $\zeta_2$, we need several preliminaries. We start by taking $\eta$ so that
\begin{equation}
\label{defeta}
\text{$\eta\in C^2[0,\infty)$, \h $\eta'\le 0$ in $[0,\infty)$, \h$\eta=1$ in $[0,1]$, \h and \h $\eta=0$ in $[2,\infty)$}
\end{equation}
and we denote by
\begin{equation*}
J(r,s):= -\log r+\eta(r/s)\log(r/s) \quad 0<r,s<1.
\end{equation*}
Then, there exists $M_{1}>0$ depending only on $\eta$ and $\nu_0$ such that
\begin{equation}
\label{defm1}
-\log \max\{r,s\}\le J\le -\log r+\log 2,\h
|\Delta_r J|\le \frac{M_{1}}{2\nu_0} s^{-2}\chi_{s<r<2s} \h\text{and}\h |J_{s}|\le \frac{M_1}{2\nu_0} s^{-1}\chi_{r<2s}
\end{equation}
for any $r,s<1$. Moreover, for any $g\in L^3_{\mathrm{rad}}(B_{1}^2)$ with $g \perp \psi_1$ in $L^2(B_{1}^2)$,
we define $\xi_0\in H^{1}_{0,\mathrm{rad}}(B_{1}^2)$ as the unique solution to the equation
\begin{equation}
\label{xi_0}
\cE \xi_0= g\quad \text{in $B_{1}^2$} \quad \xi_0=0 \quad \text{on $\partial B_{1}^{2}$,} \quad \text{$\xi_{0}\perp \psi_1$ in $L^{2}(B_{1}^2)$}
\end{equation}
and define 
\begin{equation*}
\hat q
:=
\sup \left\{
\lVert\frac{\xi_0}{\psi_1}\rVert_{L^{\infty}(B_{1}^2)}
\; ; \;
\begin{array}{l}
\xi_0 \text{ is a solution of \eqref{xi_0} with some }
g\in L^3_{\mathrm{rad}}(B_1^2) \\ \text{satisfying $g \perp \psi_1$ in  $L^2(B_1^2)$ and $\|g\|_{L^3}\le 1$}
\end{array}
\right\}.
\end{equation*}
Then, we show that $\hat{q}$ is well-defined. More precisely, we obtain the following
\begin{lem}
\label{well-def-lem}
For any $g\in L^3(B_{1}^{2})$ with $g\perp \psi_1$ in $L^2(B_{1}^2)$, the solution $\xi_0$ satisfies
$\xi_0\in C^{1}_{0}(\overline{B_1})$. Moreover,
there exists $C>0$ depending only on $K$, $\mu_1$, $\mu_2$ and $\psi_1$ such that $\lVert \xi_0\rVert_{C^{1}(B_{1}^2)}<C\lVert g\rVert_{L^3}$ and
thus $|\xi_0|\le C\lVert g\rVert_{L^3}\psi_1$ in $B_{1}^2$. 
\end{lem}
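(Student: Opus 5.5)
The plan is to combine spectral theory of $\cE$ on radial functions with elliptic regularity in the two-dimensional ball $B_1^2$, and then to use the Hopf lemma for $\psi_1$ to compare $\xi_0$ with $\psi_1$.

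\textbf{Step 1: existence and an $L^2$ bound.} By Proposition \ref{evprop}(iii), read in dimension $m=N+2\gamma=18$, the radial eigenfunctions $\psi_k$ of $-\Delta-K$ form an orthonormal basis of $L^2_{\mathrm{rad}}$ with eigenvalues $\mu_k$. For radial functions the Laplacian differs only in the weight $r^{m-1}$ versus $r$; I will use the identification of \cite[Appendix B]{KO26} to transfer this to $B_1^2$, so that $\{\psi_k\}$ is the radial eigenbasis of $-\Delta-K$ on $B_1^2$. Since $g\perp\psi_1$, we can write $\xi_0=\sum_{k\ge2}(\mu_k-\mu_1)^{-1}\langle g,\psi_k\rangle\psi_k$. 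The eigenvalue $\mu_1$ is simple, so $\mu_2-\mu_1>0$, and therefore
\[
\lVert\xi_0\rVert_{L^2}\le(\mu_2-\mu_1)^{-1}\lVert g\rVert_{L^2}\le C\lVert g\rVert_{L^3}.
\]
Testing the equation with $\xi_0$ and using $K\in C^0[0,1]$ (Proposition \ref{prop-2-1}) also gives $\lVert\xi_0\rVert_{H^1}\le C\lVert g\rVert_{L^3}$. Uniqueness under the constraint $\xi_0\perp\psi_1$ follows from the same spectral decomposition.

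\textbf{Step 2: $C^1$ regularity.} Rewrite the equation as $-\Delta\xi_0=g+(K+\mu_1)\xi_0=:F$ in $B_1^2$ with zero Dirichlet data. The potential $K+\mu_1$ is bounded, and in two dimensions $H^1\subset L^p$ for every $p<\infty$, so $F\in L^3$ with $\lVert F\rVert_{L^3}\le C\lVert g\rVert_{L^3}$. Calder\'on--Zygmund estimates then give $\xi_0\in W^{2,3}(B_1^2)$. Since $3>2$, the embedding $W^{2,3}\subset C^{1,1/3}(\overline{B_1^2})$ yields $\lVert\xi_0\rVert_{C^1}\le C\lVert g\rVert_{L^3}$ and $\xi_0\in C^1_0$. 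The constant depends on $K$, $\mu_1$ and $\mu_2$, as the statement requires.

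\textbf{Step 3: comparison with $\psi_1$.} This is the main point. First, $\psi_1>0$ in $B_1^2$ and is continuous and positive at the origin, so $\psi_1\ge c>0$ on $\{r\le 1/2\}$. Second, near the boundary $\psi_1$ satisfies $-\Delta\psi_1=(K+\mu_1)\psi_1$ with bounded coefficient, and $\psi_1\in C^2(\overline{B_1})$ by Proposition \ref{evprop}(iii). The Hopf lemma applies because $\psi_1>0$ solves a linear equation: write $-\Delta\psi_1+c^-\psi_1=c^+\psi_1\ge0$, where $c^\pm$ are the positive and negative parts of $K+\mu_1$. This gives $\psi_1'(1)<0$, and hence $\psi_1(r)\ge c(1-r)$ on $[1/2,1]$.

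On the other side, $\xi_0(1)=0$ and the mean value theorem give $|\xi_0(r)|\le\lVert\xi_0\rVert_{C^1}(1-r)$. Combining the two regions, $|\xi_0|\le C\lVert g\rVert_{L^3}\psi_1$ on all of $B_1^2$, so $\hat q<\infty$.

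\textbf{Expected obstacle.} The delicate part is justifying that the spectral statements of Proposition \ref{evprop}, which are formulated on $B_1^m$, transfer to the two-dimensional ball used here. If this identification via \cite[Appendix B]{KO26} proves awkward, I would instead argue directly. Since $K$ is bounded, $-\Delta-K$ on $H^1_{0,\mathrm{rad}}(B_1^2)$ has compact resolvent. Simplicity of the principal eigenvalue, and its equality with $\mu_1$, would then be checked through the explicit identity \eqref{pnu0} together with positivity of $\psi_1$.
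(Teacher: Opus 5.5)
Your argument is correct and is essentially the paper's proof: the spectral-gap bound $(\mu_2-\mu_1)\lVert\xi_0\rVert_{L^2}\le\lVert g\rVert_{L^2}\le C\lVert g\rVert_{L^3}$, an $H^1$ bound by testing with $\xi_0$, $W^{2,3}$ regularity with $W^{2,3}(B_1^2)\subset C^1(\overline{B_1^2})$, and the Hopf lemma for $\psi_1$ combined with $\xi_0=0$ on $\partial B_1^2$. Your ``expected obstacle'' is not real, and the transfer you sketch for it would actually fail: conjugating by $r^{\gamma}$ gives the radial Laplacian in dimension $N-2\gamma$, which is $2$ for $N=10$ (the ``$m=N+2\gamma$'' in Proposition \ref{evprop} is a misprint, as the normalization constant and Lemma \ref{blem}(iii) show), so Proposition \ref{evprop}(iii) applies directly on $B_1^2$ --- whereas no passage from $B_1^{18}$ to $B_1^2$ is possible, because radial Laplacians in different dimensions differ in the drift term $\frac{m-1}{r}\partial_r$, not merely in the volume weight.
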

\begin{proof}
By the energy estimate and the Poincar\'e inequality, we obtain
\begin{align*}
(\mu_2-\mu_1)\lVert \xi_0&\rVert_{L^2(B_{1}^{2})}^2\le \langle\cE \xi_0,\xi_0\rangle\le \lVert g\rVert_{L^{2}(B_{1}^2)}\lVert \xi_0\rVert_{L^2(B_{1}^2)}\le \pi^{1/3}\lVert g\rVert_{L^3(B_{1}^2)}\lVert\xi_0\rVert_{L^2(B_{1}^2)};\\
\lVert \nabla \xi_{0}\rVert_{L^2(B_{1}^2)}^2&=\langle \cE \xi_0,\xi_0\rangle+ \lVert (\mu_1+K)\xi_0^2\rVert_{L^1(B_1^2)}.
\end{align*}
Hence, we have $\lVert \xi_{0}\rVert_{H^{1}_{0}(B_{1}^2)}<C \lVert g\rVert_{L^3}$ and thus $\lVert \xi_{0}\rVert_{L^{3}(B_1^2)}\le C \lVert g\rVert_{L^3}$
where $C>0$ is a constant depending only on $K$, $\mu_1$ and $\mu_2$. Therefore, by the standard regularity theory, we have
\begin{align*}
\lVert \xi_0 \rVert_{C^1(B_{1}^2)}\le C\lVert \xi_0\rVert_{W^{2,3}(B_{1}^{2})}
&\le C(\lVert \xi_0\rVert_{L^3(B_{1}^{2})}+\lVert (K+\mu_1)\xi_{0}\rVert_{L^3(B_{1}^{2})}+\lVert g\rVert_{L^3(B_{1}^2)})\\
&\le C \lVert g\rVert_{L^3},
\end{align*}
where $C$ is depending only on $K$, $\mu_1$ and $\mu_2$. Hence, the result follows from the Hopf lemma.
\end{proof}
We define
\begin{equation}
\label{defm2}
M_2=4\hat{q}(\nu_0 \lVert(K+\mu_1)(-\log r+\log 2)\rVert_{L^3(B_{1}^2)}+\lVert \psi_1\rVert_{L^3(B_{1}^2)})
\end{equation}
and fix $l>0$ such that 
\begin{equation}
\label{asL}
L:=\frac{\nu_0}{4} \log l>4(M_1+M_2)(1+\psi_1(0))+64,
\end{equation}
where $M_{1}>0$ and $M_{2}>0$ are those in \eqref{defm1} and \eqref{defm2}, respectively.
Then, the functions $\zeta_1$ and $\zeta_2$ are defined as follows, and we have the following
\begin{lem}
\label{zetalem}
We define 
\begin{equation*}
\zeta_1 =
\begin{cases}
\mu_1 b l t^{-\frac{1}{\mu_1 b}}(1-
\frac{(\mu_1 b+1)b}{(\mu_1 b)^3}\frac{\log t+1}{t}) & \text{if $\mu_1>0$,}\\
(2b)^{1/2}e^{-1}l e^{-(2t/b)^{1/2}}t^{1/2} & \text{if $\mu_1=0$}
\end{cases}
\end{equation*}
and
\begin{equation*}
\zeta_2 =
\begin{cases}
 e^{\frac{1}{b\mu_1}-5}t^{-\frac{1}{\mu_1 b}}e^{-\frac{(1-5b\mu_1)\log t}{b^2 \mu_{1}^3 t}}
 & \text{if $\mu_1>0$,}\\
e^{10} t^{-5} e^{-(2t/b)^{1/2}} & \text{if $\mu_1=0$.}
\end{cases}
\end{equation*}
Then, we have
\begin{equation*}
-\frac{\zeta_1}{\zeta_1'}=b\mu_1 t- b\log (-\zeta_{1}')+b\log l+O(t^{-1/2}), \quad \text{as $t\to\infty$}
\end{equation*}
and
\begin{equation*}
-\frac{\zeta_2}{\zeta_2'}= b\mu_1 t- b\log \zeta_2-5b\log t
+O(t^{-1/2}), \quad \text{as $t\to\infty$}.
\end{equation*}
Moreover, if $\mu_1>0$, we have
\begin{equation*}
-\zeta_i'\simeq t^{-\frac{1}{\mu_1 b}-1}, \quad \zeta_i'' \simeq t^{-\frac{1}{\mu_1 b}-2}, \quad  -\zeta_i''' \simeq t^{-\frac{1}{\mu_1 b}-3}
\end{equation*}
for any $i=1,2$ and $t\ge 2$. On the other hand, if $\mu_1=0$, we have
\begin{equation*}
-\zeta_1'\simeq e^{-(2t/b)^{1/2}}, \quad \zeta_1'' \simeq e^{-(2t/b)^{1/2}} t^{-1/2}, \quad  -\zeta_1''' \simeq e^{-(2t/b)^{1/2}} t^{-1}
\end{equation*} 
and
\begin{equation*}
    -\zeta_2'\simeq e^{-(2t/b)^{1/2}}t^{-11/2}, \quad \zeta_2'' \simeq e^{-(2t/b)^{1/2}} t^{-6}, \quad  -\zeta_2''' \simeq e^{-(2t/b)^{1/2}} t^{-13/2}
\end{equation*}
for any $t\ge 2$. Finally, we have 
$$
|(\zeta_2''/\zeta_2')'|=O(t^{-1}|\zeta_2''/\zeta_2'|) \quad \text{as $t\to\infty$.}
$$
\end{lem}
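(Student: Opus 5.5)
Lemma \ref{zetalem} is a direct asymptotic computation with explicit formulas, so I will differentiate and expand, treating the cases $\mu_1>0$ and $\mu_1=0$ separately. The most convenient quantity is the logarithmic derivative $\zeta_i'/\zeta_i=(\log\zeta_i)'$, because $-\zeta_i/\zeta_i'$ is simply its reciprocal. Throughout, I abbreviate $\alpha:=1/(\mu_1 b)$ when $\mu_1>0$.

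\emph{Case $\mu_1>0$, $\zeta_2$.} Write $\log\zeta_2=\alpha-5-\alpha\log t-c\,\frac{\log t}{t}$ with $c=\frac{1-5b\mu_1}{b^2\mu_1^3}$. Then
\[
(\log\zeta_2)'=-\frac{\alpha}{t}-c\,\frac{1-\log t}{t^2}.
\]
Inverting gives
\[
-\frac{\zeta_2}{\zeta_2'}=\frac{t}{\alpha}\Bigl(1-\frac{c(\log t-1)}{\alpha t}+O\Bigl(\frac{\log^2 t}{t^2}\Bigr)\Bigr)=b\mu_1 t-\frac{c}{\alpha^2}(\log t-1)+O\Bigl(\frac{\log^2 t}{t}\Bigr).
\]
On the right-hand side of the claim, $-b\log\zeta_2-5b\log t=-b(\alpha-5)+(b\alpha-5b)\log t+O(\log t/t)$. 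The constants $\alpha-5$ in the exponent and $c$ were chosen exactly so that the $\log t$ and constant terms agree. To verify this I only need to check two identities: $c/\alpha^2=b\cdot\frac{1-5b\mu_1}{b\mu_1}$ for the $\log t$ coefficient, and the constant term. The remainder $O(\log^2t/t)$ is $O(t^{-1/2})$.

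\emph{Case $\mu_1>0$, $\zeta_1$.} Write $\zeta_1=\mu_1 b l\,t^{-\alpha}(1-d\frac{\log t+1}{t})$ with $d=\frac{(\mu_1b+1)b}{(\mu_1b)^3}$. Then $-\zeta_1'=\mu_1 b l\,\alpha t^{-\alpha-1}(1+O(\log t/t))=l\,t^{-\alpha-1}(1+O(\log t/t))$, so
\[
-b\log(-\zeta_1')+b\log l=b(\alpha+1)\log t+O(\log t/t).
\]
Next I expand $\log\zeta_1$ to second order in $\log t/t$ and compute $-\zeta_1/\zeta_1'=b\mu_1t+(\text{coefficient})\log t+\text{const}+O(\log^2t/t)$. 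I then check that the coefficient is $b(\alpha+1)$ and the constant is $0$; the choice of $d$ together with the "$+1$" in $\log t+1$ is designed to make this happen.

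\emph{Case $\mu_1=0$.} Set $\beta=(2/b)^{1/2}$. For $\zeta_2$, $(\log\zeta_2)'=-\frac{\beta}{2}t^{-1/2}-\frac{5}{t}$, so
\[
-\frac{\zeta_2}{\zeta_2'}=\frac{2}{\beta}t^{1/2}\Bigl(1-\frac{10}{\beta}t^{-1/2}+O(t^{-1})\Bigr)=bt^{1/2}\cdot\tfrac{\beta}{1}\cdot\tfrac{1}{1}\ldots
\]
and I will match this against $-b\log\zeta_2-5b\log t=b\beta t^{1/2}-10b$, using $2/\beta=b\beta$. The $e^{10}$ prefactor is what makes the constants agree. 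The $\zeta_1$ case is analogous: $-\zeta_1'\approx(2b)^{1/2}e^{-1}l\frac{\beta}{2}e^{-\beta t^{1/2}}(1+O(t^{-1/2}))$, so $-b\log(-\zeta_1')+b\log l=b\beta t^{1/2}+\text{const}$, and $e^{-1}$ fixes the constant.

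\emph{Size estimates and the final ratio bound.} The bounds $\zeta_i'\simeq\ldots$, $\zeta_i''\simeq\ldots$, $\zeta_i'''\simeq\ldots$ follow by differentiating $\zeta=e^{g}$, with $g$ explicit, and keeping the leading term; the sign is fixed by the leading factor. Such estimates hold for large $t$. Extending them to all $t\ge2$ requires positivity or nonvanishing of the relevant expressions on a compact interval; that this holds may depend on $l$ and the constants, and if it fails I would interpret $\simeq$ for $t$ large. For the last claim, $\zeta_2''/\zeta_2'=g'+g''/g'$ is a sum of terms of the form $t^{-k}(\log t)^j$ or $t^{-1/2}$. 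Differentiating each such term gains a factor $t^{-1}$, which gives $|(\zeta_2''/\zeta_2')'|=O(t^{-1}|\zeta_2''/\zeta_2'|)$.

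The main obstacle is the bookkeeping needed to confirm that the constant and $\log t$ terms match exactly in the $\mu_1>0$ expansions. This requires carrying the second-order terms of $\log(1-d\frac{\log t+1}{t})$ and of the reciprocal carefully.
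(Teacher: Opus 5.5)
Your plan is the same direct differentiate-and-expand computation as the paper's, and it goes through. The paper writes out only $-\zeta_1/\zeta_1'$ for $\mu_1>0$, using $d/k^2=(k+1)b$ with $k=1/(\mu_1 b)$, and calls the other cases similar. Two small remarks: first, in the $\zeta_2$, $\mu_1>0$ inversion you dropped a sign. Since $(\log\zeta_2)'=-\frac{\alpha}{t}\bigl(1-\frac{c(\log t-1)}{\alpha t}\bigr)$, one gets $-\zeta_2/\zeta_2'=b\mu_1 t+\frac{c}{\alpha^2}(\log t-1)+O(\log^2 t/t)$, and this matches $b(\alpha-5)(\log t-1)$ exactly because of your (correct) identity $c/\alpha^2=b(\alpha-5)$. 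Second, your caution about $t\ge 2$ is warranted: already for $f=0$ (where $\mu_1 b\approx 0.195$), both $\zeta_1(2)$ and $-\zeta_1'(2)$ are negative, so the $\mu_1>0$ size bounds hold only for $t$ large.
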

\begin{proof}
We only compute $-\zeta_1/\zeta'_{1}$ for the case $\mu_1>0$. We denote by $k=(\mu_1 b)^{-1}$ and $d=(k+1)bk^2$. Then, we have
$\zeta_1=lk^{-1}t^{-k}(1-d(\log t+1)t^{-1})$. In addition,
by a direct computation, we obtain
\begin{equation*}
-\frac{k}{l}\zeta'_{1}=t^{-k-1}(k(1-d\frac{\log t+1}{t})-d\frac{\log t}{t}), \h \log(- \zeta_{1}')=-(k+1)\log t+\log l+O(\frac{\log t}{t}).
\end{equation*}
Hence, it yields
\begin{align*}
-\frac{\zeta_1}{\zeta'_1}=\frac{t}{k}+\frac{\frac{d}{k}\log t}{k(1-d\frac{\log t+1}{t})-d\frac{\log t}{t}}&=\frac{t}{k}+\frac{d\log t}{k^2}+O(\frac{(\log t)^2}{t})\\
&=\frac{t}{k}-b\log(-\zeta_{1}')+b\log l+O(\frac{(\log t)^2}{t}).
\end{align*}
The other assertions are proved by similar computations.
\end{proof}

Next, we aim to construct a super-solution. We define $\Gamma(r,t)$ and
$\nu_1(t)>0$ as
\begin{equation}
\label{defGam}
\frac{1}{\nu_1 (t)}=\int_{B_{1}^2}(K+\mu_1)\Gamma \psi_1\,dx \quad \text{and}\quad \Gamma=-\log r+ \eta(r/\delta(t))\log(r/\delta(t)),
\end{equation}
where $\eta$ is a cut-off function in \eqref{defeta} and $\delta$ is defined by
\begin{equation}
\label{defdelta}
\delta(t)^4=-e^{-\mu_1 t}\zeta_1 '(t).
\end{equation}
Let $T_0>0$ be a number such that $\delta<1$ for $t>T_0$.
Then, thanks to the fact that $K\in C^{0}(\overline{B_1})$, we can define $\xi_1$ as the unique solution of
\begin{equation*}
\cE \xi_1=-\psi_1+\nu_1(t)(K+\mu_1)\Gamma \quad \text{in $B_{1}^2$,}\quad \xi_1=0 \quad \text{on $\partial B_{1}^2$,} \quad \xi_1\perp \psi_1 \quad \text{in $L^2(B_{1}^2)$}
\end{equation*}
for each $t>T_0$. In addition, we obtain the following
\begin{lem}
\label{nu-1-lem}
It follows that $\nu_1\in C^1(T_{0},\infty)$,
\begin{equation*}
\text{$\nu_1=\nu_0+O(\delta(t)^2)$ \h and \h $\nu_1'(t)=O(\delta(t)|\delta'(t)|)$ \h as $t\to\infty$.}
\end{equation*}
In addition, $\xi_1 \in C^{1}((T_{0},\infty); H^{1}_{0}(B_1))$ and $\xi_1(r,t), \xi_{1,t}(r,t)\in C^1_{0}(\overline{B_1^2})$ for any $t>T_{0}$. Moreover, we have 
\begin{equation*}
    \text{$\lVert \zeta_1\rVert_{C^1(B_{1}^2)}\le C$, \quad $|\xi_1|\le \frac{M_2}{2}\psi_{1}$\quad in $B_1$\quad and \quad$|\xi_{1,t}|<C|\delta'|\delta^{-1/3} \psi_{1}$ 
\quad in $B_1$}
\end{equation*}
for any $t$ sufficiently large with some $C>0$ independent of $t$. 
\end{lem}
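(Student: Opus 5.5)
The plan is to treat $\nu_1$ as a perturbation of $\nu_0$, comparing $\Gamma(\cdot,t)$ with $-\log r$ through \eqref{defm1}, and then to obtain every bound on $\xi_1$ from Lemma \ref{well-def-lem}. (In the last display, the bound $\lVert \zeta_1\rVert_{C^1(B_1^2)}\le C$ is presumably meant as $\lVert \xi_1\rVert_{C^1(B_1^2)}\le C$, and that is what I will prove.)

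\textbf{Step 1: $\nu_1=\nu_0+O(\delta^2)$.} Write $\Gamma=J(r,\delta(t))$ with $J$ as in \eqref{defm1}. Then $\Gamma+\log r=\eta(r/\delta)\log(r/\delta)$ is supported in $\{r<2\delta\}$ and is bounded there by $|\log(r/\delta)|$. Since $K\in C^0$ and $\psi_1\in C^2$ are bounded, we get
\[
\Big|\int_{B_1^2}(K+\mu_1)(\Gamma+\log r)\psi_1\,dx\Big|\le C\int_0^{2\delta}|\log(r/\delta)|\,r\,dr=C\delta^2\int_0^2|\log\rho|\,\rho\,d\rho=O(\delta^2).
\]
Combining this with \eqref{pnu0} gives $1/\nu_1=1/\nu_0+O(\delta^2)$, and hence $\nu_1=\nu_0+O(\delta^2)$.

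\textbf{Step 2: $\nu_1\in C^1$ and $\nu_1'=O(\delta|\delta'|)$.} First, $\delta\in C^1$ because $\zeta_1'\in C^1$ and $-\zeta_1'>0$ for large $t$ (Lemma \ref{zetalem}). Differentiating under the integral,
\[
\frac{d}{dt}\frac{1}{\nu_1}=\delta'(t)\int_{B_1^2}(K+\mu_1)J_s(r,\delta)\psi_1\,dx .
\]
By \eqref{defm1}, $|J_s|\le C\delta^{-1}\chi_{r<2\delta}$, so the integral is $O(\delta^{-1}\delta^2)=O(\delta)$. Hence $(1/\nu_1)'=O(\delta|\delta'|)$, and since $\nu_1$ is bounded, $\nu_1'=O(\delta|\delta'|)$.

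\textbf{Step 3: bounds on $\xi_1$.} Set $g(t)=-\psi_1+\nu_1(K+\mu_1)\Gamma$. By the definition of $\nu_1$ and $\lVert\psi_1\rVert_{L^2}=1$, we have $g\perp\psi_1$. Moreover, $0<\Gamma\le -\log r+\log 2$ by \eqref{defm1}, so $g\in L^3$ with
\[
\lVert g\rVert_{L^3}\le \lVert\psi_1\rVert_{L^3}+\nu_1\lVert (K+\mu_1)(-\log r+\log 2)\rVert_{L^3}.
\]
For large $t$, Step 1 gives $\nu_1\le 2\nu_0$, so comparing with \eqref{defm2} yields $\lVert g\rVert_{L^3}\le M_2/(2\hat q)$. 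The definition of $\hat q$ together with Lemma \ref{well-def-lem} then gives $|\xi_1|\le \frac{M_2}{2}\psi_1$ and $\lVert\xi_1\rVert_{C^1}\le C$.

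\textbf{Step 4: time regularity of $\xi_1$.} Because $\cE$ does not depend on $t$, the map $g\mapsto\xi_0$ is linear and bounded from $\{g\in L^3: g\perp\psi_1\}$ to $C^1_0$ (Lemma \ref{well-def-lem}). So it suffices to show that $t\mapsto g(t)$ is $C^1$ into $L^3$. Its derivative is
\[
g_t=\nu_1'(K+\mu_1)\Gamma+\nu_1(K+\mu_1)\delta' J_s(r,\delta),
\]
which still satisfies $g_t\perp\psi_1$, since $\int g\psi_1=0$ for all $t$. We then have $\xi_{1,t}=\xi_0[g_t]$. Estimating each term:
\begin{itemize}
\item the first term is $O(\delta|\delta'|)$ in $L^3$;
\item the second has $L^3$ norm at most $C|\delta'|\delta^{-1}(\pi(2\delta)^2)^{1/3}=C|\delta'|\delta^{-1/3}$.
\end{itemize}
Hence $\lVert g_t\rVert_{L^3}\le C|\delta'|\delta^{-1/3}$, and Lemma \ref{well-def-lem} gives $|\xi_{1,t}|\le C|\delta'|\delta^{-1/3}\psi_1$. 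Continuity of $g_t$ in $L^3$ (by dominated convergence, using that $J_s$ is continuous in $s$) gives $\xi_1\in C^1((T_0,\infty);H^1_0)$, with $\xi_1,\xi_{1,t}\in C^1_0$ for each $t$.

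\textbf{Main obstacle.} The delicate point is Step 4: justifying differentiation in $t$ of the cut-off term, where $J_s$ is singular of size $\delta^{-1}$ on a shrinking ball. The exponent $-1/3$ comes exactly from measuring this term in $L^3$ on $\{r<2\delta\}$.
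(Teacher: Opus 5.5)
Your proposal is correct and follows the same route as the paper. Both compare $1/\nu_1$ with \eqref{pnu0} on $B_{2\delta}$ to get $\nu_1=\nu_0+O(\delta^2)$, and both differentiate the defining identity of $\nu_1$ using the bound on $J_s$ from \eqref{defm1}. Both bound the right-hand side in $L^3$ by $M_2/(2\hat q)$, and both bound the differentiated equation's right-hand side by $O(|\delta'|\delta^{-1/3})$ in $L^3$ before applying Lemma \ref{well-def-lem}. Your reading of $\lVert \zeta_1\rVert_{C^1}$ as a typo for $\lVert \xi_1\rVert_{C^1}$ matches the parallel statement in Lemma \ref{nu-2-lem}.
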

\begin{proof}
Thanks to \eqref{pnu0} and \eqref{defGam},  
we obtain the following
\begin{equation*}
\frac{|\nu_1-\nu_0|}{\nu_1\nu_0}\le \int_{B_{2\delta}} (K+\mu_1)|\eta(r/\delta)\log(r/\delta)|\psi_1(r)\le O(\delta^2).
\end{equation*}
In addition, by the definitions of $\nu_1$ and $\Gamma$, we obtain $\nu_1\ge c$ with some $c>0$ independent of $t$ for any sufficiently large $t>0$. Hence, we have $\nu_1=\nu_0+O(\delta^2)$ as $t\to\infty$.

Next, we can deduce from $\Gamma\in C^{1}((T_{0},\infty) ; L^1(B_{1}^2))$ that $\nu_1(t)\in C^1(T_{0},\infty)$. Then, by differentiating the equation with respect to $t$, we have
\begin{equation*}
0=\int_{B_{1}^{2}}(K+\mu_1)\psi_1(\nu_1'(t)\Gamma -\nu_1(t)\delta^{-2}\delta'(r\eta'(r/\delta)\log (r/\delta)+\eta(r/\delta)\delta) \,dr.
\end{equation*}
As a result, we have
\begin{equation*}
|\frac{\nu_1'}{\nu_{1}^2}|\le C\delta|\delta'|.
\end{equation*}
Now, it follows from \eqref{defm1} that
$0\le \Gamma\le -\log r+\log 2$ and $|\Gamma_t|\le C\delta^{-1}|\delta'|\chi_{r<2\delta}$ in $B_{1}$ for any $t>T_0$. From the above estimates, we obtain
\begin{equation*}
\lVert -\psi_{1}+\nu_1(t)(K+\mu_1)\Gamma\rVert_{L^3}\le \frac{M_2}{2\hat{q}}
\end{equation*}
for any $t$ sufficiently large and
\begin{equation*}
\lVert(K+\mu_1)(\nu_1'(t)\Gamma+\nu_1(t)\Gamma_t)\rVert_{L^3}\le O(\delta^{-1/3}|\delta'|)\quad \text{as $t\to\infty$.}
\end{equation*}
In addition, we can deduce that $\xi_{1}\in C^{1}((T_0,\infty); H^{1}_{0}(B_{1}^2))$ and $\xi_{1,t}$ satisfies
\begin{equation*}
\cE \xi_{1.t}= (K+\mu_1)(\nu_1'\Gamma+\nu_1\Gamma_{t}) \quad \xi_{1,t}=0 \quad \text{on $\partial B_{1}^2$} \quad \xi_{1,t}\perp \psi_1 \quad \text{in $L^2(B_{1}^2)$}.
\end{equation*}
Hence, it follows 
from Lemma \ref{well-def-lem} and the definition of $\hat{q}$
that
$\xi_1, \xi_{1,t}\in C^{1}_{0}(\overline{B_1^2})$,
$|\xi_1|\le \frac{M_2}{2}\psi_1$ for any $t$ sufficiently large and $|\xi_{1,t}|=O(\delta^{-1/3}|\delta'|\psi_1)$ as $t\to\infty$.
\end{proof}
Thanks to Lemma \ref{nu-1-lem} and \eqref{defm2}, we deduce that
\begin{equation}
\label{px1}
0\le \xi_1+\frac{M_2}{2} \psi_1\le M_2\psi_1 \quad \text{in $B_1$}
\end{equation}
for all $t$ sufficiently large. Then, we construct a super-solution $\Psi_1$ as follows.
\begin{prop}
\label{superprop}
There exist $\hat{T}>T_0$ such that
\begin{equation*}
\Psi_{1}=\zeta_1 (t)\psi_1+\zeta_{1}'(t)(\nu_1(t)\Gamma+\xi_1+\frac{M_2}{2}\psi_1)\in C^{1}((T_0,\infty); H^{1}_{0}(B_{1}^2))
\end{equation*}
satisfies $\Psi_1=0$ on $\partial B_1\times (\hat{T},\infty)$ and
\begin{equation*}
(\partial_{t}+\cE)\Psi_{1}\ge -\min\{\lambda^{*}e^{u^{*}}r^{4}e^{\mu_1 t}\cF(r^{-4}e^{-\mu_1 t}\Psi_{1}),
-L\zeta_{1}'(t)\delta^{-2}(t)\}
\end{equation*}
in $B_{1}^{2}\times (\hat{T},\infty)$. Moreover, we obtain
\begin{equation*}
\Psi_{1}(r)\ge \frac{-L\zeta'_{1}(t)}{2\psi_1(0)}\psi_1(r) \quad \text{in $B_{1}^{2}$} \quad\text{and}\quad \Psi_{1}(\delta)\le -2L\zeta_{1}'(t)
\end{equation*}
for any $\hat{T}<t$.
\end{prop}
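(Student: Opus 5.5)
The plan is to compute $(\partial_t+\cE)\Psi_1$ term by term, using $\cE\psi_1=0$ and the defining equation of $\xi_1$. Since $-\Delta\Gamma = -\Delta_r\bigl(\eta(r/\delta)\log(r/\delta)\bigr)$ away from the origin and $\Gamma$ is smooth near $0$, we have
\[
\cE(\nu_1\Gamma+\xi_1)=-\nu_1\Delta\bigl(\eta(r/\delta)\log(r/\delta)\bigr)-\nu_1(K+\mu_1)\Gamma+\cE\xi_1=-\psi_1-\nu_1\Delta\bigl(\eta(r/\delta)\log(r/\delta)\bigr).
\]
Hence
\[
(\partial_t+\cE)\Psi_1=\zeta_1'\psi_1+\zeta_1''\Bigl(\nu_1\Gamma+\xi_1+\tfrac{M_2}{2}\psi_1\Bigr)+\zeta_1'\Bigl(-\psi_1-\nu_1\Delta(\cdots)+\nu_1'\Gamma+\nu_1\Gamma_t+\xi_{1,t}\Bigr).
\]
The $\zeta_1'\psi_1$ terms cancel. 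We are left with a positive main term $\zeta_1''(\nu_1\Gamma+\xi_1+\frac{M_2}{2}\psi_1)\ge \zeta_1''\nu_1\Gamma\ge0$, using \eqref{px1} and $\zeta_1''>0$. The error terms are:
\begin{itemize}
\item $-\zeta_1'\nu_1\Delta(\cdots)$, of size at most $M_1|\zeta_1'|\delta^{-2}\chi_{\delta<r<2\delta}$ by \eqref{defm1};
\item $\zeta_1'\nu_1\Gamma_t$, supported in $r<2\delta$ and of size $|\zeta_1'|\delta^{-1}|\delta'|$;
\item $\zeta_1'(\nu_1'\Gamma+\xi_{1,t})$, which is $O(|\zeta_1'|\,|\delta'|\delta^{-1/3}(\psi_1+\Gamma))$ by Lemma \ref{nu-1-lem}.
\end{itemize}
Since $\delta'/\delta=O(1)$ by Lemma \ref{zetalem} (and is even $O(t^{-1/2})$ when $\mu_1=0$), the last two groups are dominated by $\zeta_1''\nu_1\Gamma$ away from $r\sim\delta$ for large $t$, because $\zeta_1''/|\zeta_1'|\gtrsim t^{-1}$ or $t^{-1/2}$, while $\delta$ decays exponentially.

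The remaining task is the inequality: the right side $-\min\{\cdot\}$ is nonpositive, and I show $(\partial_t+\cE)\Psi_1\ge -\min\{A,B\}$, where $A=\lambda^*e^{u^*}r^4e^{\mu_1t}\cF(r^{-4}e^{-\mu_1t}\Psi_1)$ and $B=-L\zeta_1'\delta^{-2}$. It suffices to show $(\partial_t+\cE)\Psi_1\ge-B$ everywhere, and $\ge-A$ where $A<B$. The bound against $B$ is immediate from the error estimates, since $M_1\le L/4$ by \eqref{asL}. For the region $r\in(\delta,2\delta)$, where the cutoff error is genuinely of order $|\zeta_1'|\delta^{-2}$, I show that $A$ is comparable to $B$ there, so that only $-B$ is needed.

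For this I need the size of $\Psi_1$. By the expansion in Lemma \ref{zetalem}, $\zeta_1=-\zeta_1'(b\mu_1t-b\log(-\zeta_1')+b\log l+o(1))$. Since $\Gamma=-\log\delta+O(1)$ near $r=\delta$ and $\log\delta=\frac14(-\mu_1t+\log(-\zeta_1'))$, the identity $b=\nu_0/(4\psi_1(0))$ is designed so that the leading terms $\zeta_1\psi_1(0)$ and $\zeta_1'\nu_0(-\log\delta)$ cancel at $r\approx\delta$. This leaves $\Psi_1(\delta)\approx-\zeta_1'\bigl(\psi_1(0)b\log l+O(M_1+M_2)\bigr)$, and since $L=\frac{\nu_0}{4}\log l=\psi_1(0)b\log l$, this gives $\Psi_1(\delta)\le-2L\zeta_1'$. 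The same computation, combined with monotonicity in $r$ (larger $r$ reduces $\Gamma$ and the $\zeta_1'\Gamma$ term becomes less negative) and with \eqref{px1} and $\psi_1'\le0$, gives the lower bound $\Psi_1\ge\frac{-L\zeta_1'}{2\psi_1(0)}\psi_1$, using $L>64+4(M_1+M_2)(1+\psi_1(0))$ to absorb the $\xi_1$ and cutoff corrections. On $r\sim\delta$, $\lambda^*e^{u^*}r^4\approx 2N-4=16$, and $r^{-4}e^{-\mu_1t}\Psi_1\approx\delta^{-4}e^{-\mu_1t}L|\zeta_1'|\approx L\ge2$. Then \eqref{pF} gives $A\gtrsim 16\,e^{\mu_1t}\delta^4\cdot L/2=8L|\zeta_1'|\ge$ the cutoff error $M_1|\zeta_1'|\delta^{-2}\cdot$(appropriately rescaled), so the min is harmless there.

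The boundary condition holds since $\Gamma$, $\psi_1$, $\xi_1$ vanish on $\partial B_1^2$ once $2\delta<1$; the $C^1$ regularity in time follows from Lemma \ref{nu-1-lem}. The main obstacle is the precise cancellation between $\zeta_1\psi_1(0)$ and $\nu_0\zeta_1'\log\delta$ near $r=\delta$. This requires carefully tracking the $O(t^{-1/2})$ remainders of Lemma \ref{zetalem} together with $\nu_1-\nu_0=O(\delta^2)$ and $\psi_1(r)-\psi_1(0)=O(r^2)$, and matching the constant $L$ so that $\Psi_1(\delta)\le-2L\zeta_1'$ and positivity hold simultaneously.
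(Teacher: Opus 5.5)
Your route is the paper's. You cancel the $\zeta_1'\psi_1$ terms, keep $\zeta_1''(\nu_1\Gamma+\xi_1+\frac{M_2}{2}\psi_1)\ge 0$ and absorb the $\nu_1'$, $\xi_{1,t}$ errors into it. You bound the cutoff error by $B:=-L\zeta_1'\delta^{-2}$ via $2M_1<L$, use $b=\nu_0/(4\psi_1(0))$ to get $\Psi_1\approx L|\zeta_1'|$ near $r=\delta$, and then show $\min\{A,B\}=B$ where the error is negative. The cancellation and $\Psi_1(\delta)\le-2L\zeta_1'$ are fine.

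The gap is in the decisive comparison of $A$ with $B$, which you miscompute. Since $\lambda^*e^{u^*}=16r^{-2}+K$, one has $\lambda^*e^{u^*}r^4\approx16r^2$, not $16$. Your resulting bound $A\gtrsim 8L|\zeta_1'|$ is smaller than the cutoff error $M_1|\zeta_1'|\delta^{-2}$ by a factor $\delta^2\to0$. With your numbers, therefore, $\min\{A,B\}=A$ and the required inequality fails; ``appropriately rescaled'' does not repair this. The correct computation (the paper's) runs as follows. On $B_{2\delta}$ one has $y:=r^{-4}e^{-\mu_1t}\Psi_1\ge(2\delta)^{-4}e^{-\mu_1t}\frac{L}{2}|\zeta_1'|=\frac{L}{32}\ge2$; this is what the $64$ in \eqref{asL} is for. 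Then \eqref{pF} gives $A\ge\frac12(16r^{-2}+K)\Psi_1\ge(4r^{-2}-C)L|\zeta_1'|\ge(\delta^{-2}-C)L|\zeta_1'|$. The slack $2M_1<L/2$ then yields $(\partial_t+\cE)\Psi_1\ge-2M_1|\zeta_1'|\delta^{-2}\ge-\min\{A,B\}$ there.

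Two further points need repair.

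First, run this on all of $B_{2\delta}$, not only on $(\delta,2\delta)$. For $r<\delta$ one has $\Gamma\equiv-\log\delta$, hence $\Gamma_t=-\delta'/\delta$ and $\zeta_1'\nu_1\Gamma_t\approx-\frac{\nu_0\mu_1}{4}|\zeta_1'|$. This is not exponentially small. For $\mu_1>0$ it is of the same order as $\zeta_1''\nu_1\Gamma\approx\frac{\nu_0(k+1)\mu_1}{4}|\zeta_1'|$ with $k=(\mu_1b)^{-1}$. So your stated reason (``$\delta$ decays exponentially'') gives no domination there; positivity survives only through the exact constants. Since $A\ge 4Lr^{-2}|\zeta_1'|(1-o(1))$ only improves as $r\to0$, the simpler fix is to use $-B$ on the whole of $B_{2\delta}$, together with $|\Gamma_t|\le\frac{M_1}{2\nu_0}\delta^{-1}|\delta'|\le\frac{M_1}{2\nu_0}\delta^{-2}$.

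Second, the global bound $\Psi_1\ge\frac{-L\zeta_1'}{2\psi_1(0)}\psi_1$ is not a plain monotonicity statement, because $\zeta_1\psi_1$ decreases in $r$ while $\zeta_1'\nu_1\Gamma$ increases. The paper splits at $r=t^{-4/3}$:
\begin{itemize}
\item On $2\delta<r<t^{-4/3}$, $\Psi_1'\ge0$, because $\nu_1|\zeta_1'|r^{-1}\ge\nu_1|\zeta_1'|t^{4/3}$ dominates $\zeta_1|\psi_1'|\le C\zeta_1r$.
\item For $r\ge t^{-4/3}$, $-\log r\le C\log t\,\psi_1$, hence $\Psi_1\ge\frac{\zeta_1}{2}\psi_1$ because $\zeta_1/|\zeta_1'|\gtrsim t^{1/2}\gg\log t$.
\end{itemize}
You need some such split.
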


\begin{proof}
By the Hopf lemma, Lemma \ref{zetalem}, \eqref{defdelta} and \eqref{defGam}, we have
\begin{equation}
\label{pGam}
c\psi_1\le \Gamma\le -C\log \delta \psi_1 \quad \text{in $B_1 \times (T_0,\infty)$, }\hspace{2mm} \frac{\zeta_1'(t) \delta'\delta^{-1/3}}{\zeta_1''(t)}\to 0 \quad \text{as $t\to\infty$}
\end{equation}
with some $0<c<C$ independent of $t$.
By Lemma \ref{nu-1-lem} and \eqref{px1}, we obtain
\begin{align*}
(\partial_t+\cE)\Psi_1&=\zeta_{1}''(t)(\nu_1(t)\Gamma+\xi_{1}+\frac{M_2}{2}\psi_1)-\zeta_{1}'(t)\nu_1(t)\Delta \Gamma +\zeta_{1}'(t)\partial_t(\nu_1(t)\Gamma+\xi_1)\\
&= \zeta_{1}''(t)(\nu_1(t)\Gamma+\xi_{1}+\frac{M_2}{2}\psi_1)-\zeta_{1}'(t)(\nu_{0}+O(\delta(t)^2))(\Delta \Gamma -\Gamma_t) \\
&\hspace{60mm}+O(-\zeta'_1(t)|\delta'|\delta^{-1/3})\psi_1)\\
&\ge -\zeta_{1}'(t)(\nu_{0}+O(\delta(t)^2))(\Delta \Gamma -\Gamma_t)
\end{align*}
in $B_{1}^{2}$ for any $t$ sufficiently large. Moreover, since $\delta'(t)\to 0$ as $t\to\infty$, we have
\begin{equation*}
|\Delta \Gamma-\Gamma_t|\le \delta'(t)|J_s|+|\Delta J|\le M_1\nu_{0}^{-1} \delta(t)^{-2}\chi_{r<2\delta(t)}\quad \text{in $B_{1}^2$}
\end{equation*}
for all $t$ sufficiently large.
Hence, we obtain
\begin{equation}
\label{supersiki}
(\partial_t+\cE)\Psi_1\ge 2M_{1}\zeta_{1}'(t)\delta^{-2}(t)\chi_{r<2\delta}\ge L\zeta'_{1}(t)\delta^{-2}(t)\chi_{r<2\delta}\quad \text{in $B_{1}^2$}
\end{equation}
for any $t>T_1$ with some $T_1>0$ because $2M_{1}<L$.

Next, we focus on the behavior of $\Psi_1$. 
By \eqref{defb}, \eqref{defdelta} and Lemma \ref{zetalem}, we have
\begin{equation*}
-\frac{\zeta_1}{\zeta_1'}(\psi_1(0)+O(\delta))+\nu_0\log \delta=L+O(\delta\log\delta)+O(t^{-1/2})=L+O(t^{-1/2})
\end{equation*}
as $t\to\infty$. 
Hence, it follows from Lemma \ref{nu-1-lem} and \eqref{asL}, \eqref{px1}, \eqref{pGam} that 
\begin{align}
\Psi_1&=-\zeta_{1}'(t)(-\frac{\zeta_1}{\zeta'_1}(\psi_1(0)+O(\delta))-(\nu_0+O(\delta^{2}))I(r,t)-(\xi_1+\frac{M_2}{2} \psi_1))\notag\\
&=-\zeta_{1}'(t)(-\frac{\zeta_1}{\zeta'_1}(\psi_1(0)+O(\delta))+\nu_0\log \delta +\nu_0(1-\eta(r/\delta))\log(r/\delta)
-(\xi_1+\frac{M_2}{2} \psi_1))\notag\\
&\quad \quad+O(-\zeta_1'(t) t^{-1/2})\notag\\
&\ge -\frac{L}{2}\zeta_{1}'(t) \label{nagai-1}
\end{align}
in $B_{2\delta}^2$ for any $t$ sufficiently large. In particular, we have
\begin{equation}
\label{nagai-2}
r^{-4} e^{-\mu_1 t}\Psi_1(r)\ge -\frac{L}{2}(2\delta)^{-4} e^{-\mu_1 t}\zeta_{1}'(t)\ge \frac{L}{32}\ge 2 \quad \text{in $B_{2\delta}^2$.}
\end{equation}
Moreover, by a similar computation, we get
\begin{equation*}
\Psi_{1}(\delta(t))\le -2L\zeta'_{1}(t)
\end{equation*}
for any $t$ sufficiently large.
In addition, it follows from Lemmata \ref{zetalem} and \ref{nu-1-lem} that
\begin{equation*}
\Psi_{1}'=\zeta_1(t)\psi_{1}'(r)+\zeta_1'(t)(-\nu_1(t)r^{-1}+\xi_{1}'+\frac{M_2}{2}\psi_{1}') \ge 0 \quad \text{in $B_{t^{-4/3}}^2\setminus B_{2\delta}^2$}
\end{equation*}
and
\begin{equation*}
\Psi_{1}(t)\ge (\zeta_1-O(\zeta_1'\log t))\psi_1\ge \frac{\zeta_1}{2}\psi_1 \quad \text{in $B_{1}^2\setminus B_{t^{-4/3}}^2$}
\end{equation*}
for any $t$ sufficiently large. Therefore, it follows from Proposition \ref{evprop}, 
\eqref{pF}, \eqref{nagai-1} and \eqref{nagai-2} that
\begin{equation*}
\Psi_{1}\ge -\frac{L}{2\psi_1(0)}\zeta_{1}'\psi_1(r) \quad \text{in $B_{1}^2$}
\end{equation*}
and
\begin{align*}
-\lambda^{*}e^{u^{*}}r^{4}e^{\mu_1 t}\cF(e^{-\mu_1 t}r^{-4}\Psi_1) &=-(\frac{2N-4}{r^2}+K)r^4 e^{\mu_1 t}\cF(e^{-\mu_1 t}r^{-4}\Psi_1)\\
&\le
4L r^{-2}\zeta'_1(t) \quad \text{in $B_{2\delta}^2$}
\end{align*}
for any $t>T_{2}$ with some $T_{2}>0$. Therefore, the conclusion follows from \eqref{supersiki}.
\end{proof}
As a result of Proposition \ref{superprop}, we get the following upper estimate of $\Phi$.
\begin{thm}
\label{supercor}
There exists $\tau>0$ such that
\begin{equation}
\label{eq-cor-u-1}
\Psi(r,t,\tau)\le \Psi_{1}(r) \quad \text{in $B_{1}^2$}
\end{equation}
for any $\hat{T}<t$, where $\hat{T}$ is that in Proposition \ref{superprop}. As a result, we have 
\begin{equation}
\label{eq-cor-u-2}
\Phi(\delta(t-\tau),t)\le C
\end{equation}
and
\begin{equation*}
    \Phi\le C e^{-\mu_1 t} (\zeta_1\phi_1+\zeta_1'(t)\nu r^{-4}\Gamma)
\end{equation*}
for any $t>\overline{T}$, where $C>0$ and $\overline{T}>0$ are independent of $t$.
\end{thm}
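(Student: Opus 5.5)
The plan is a comparison argument between $\Psi(\cdot,\cdot,\tau)$ and the super-solution $\Psi_1$ of Proposition \ref{superprop}, on the space-time region $B_1^2\times(\hat T,\infty)$, for a suitably large shift $\tau$.

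\textbf{Step 1: initial ordering.} We pick $\tau$ so that at $t=\hat T$ we have $\Psi(r,\hat T,\tau)\le \Psi_1(r,\hat T)$ in $B_1^2$. By Proposition \ref{superprop},
\[
\Psi_1(\cdot,\hat T)\ge \frac{-L\zeta_1'(\hat T)}{2\psi_1(0)}\,\psi_1 ,
\]
which is a fixed positive multiple of $\psi_1$. It therefore suffices to make $\Psi(r,\hat T,\tau)=e^{\mu_1\hat T}r^4\Phi(r,\hat T+\tau)$ smaller than $c_0\psi_1$. This follows from \eqref{ue}: $r^4\Phi(\cdot,s)\to0$ in $C^0(B_1)\cap C^2_{\mathrm{loc}}(\overline{B_1}\setminus\{0\})$. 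Near $\partial B_1$ both $\Psi$ and $\psi_1$ vanish, and $\psi_1$ has a nonzero normal derivative by the Hopf lemma. The $C^1$ convergence near the boundary then gives $r^4\Phi(\cdot,s)\le \epsilon\psi_1$ for $s$ large. Choosing $\tau$ large finishes Step 1. On $\partial B_1$ both functions vanish.

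\textbf{Step 2: comparison, the main obstacle.} The super-solution inequality is only
\[
(\partial_t+\cE)\Psi_1\ge -\min\{\text{nonlinear term},\,-L\zeta_1'\delta^{-2}\},
\]
and the equation for $\Psi$ has the right-hand side $-\lambda^*e^{u^*}e^{\mu_1t}r^4\cF(r^{-4}e^{-\mu_1t}\Psi)$ with a minus sign. Since $\cF$ is nondecreasing, this term is decreasing in $\Psi$, so $\Psi\mapsto$ RHS is nonincreasing. On $\{\Psi\le\Psi_1\}$ this already gives the correct ordering, but a standard comparison principle needs a Lipschitz bound for the nonlinearity. Two difficulties must be handled.

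First, the coefficient $\lambda^*e^{u^*}\cF'$ is singular like $r^{-2}$. I would set $w=(\Psi-\Psi_1)_+$, which vanishes on the boundary, multiply by $w$ and integrate. The nonlinear difference has the favourable sign because $\cF$ is monotone, and $\min\{A,B\}\le A$ means the super-solution inequality implies the one with just the nonlinear term. This yields
\[
\frac{d}{dt}\|w\|_{L^2}^2\le -2\langle \cE w,w\rangle .
\]
The right side is $\le0$, since $\cE\ge0$ by Proposition \ref{evprop} ($\mu_1$ is the bottom of the spectrum). Hence $w\equiv0$.

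Second, we must justify the $H^1$ regularity, i.e.\ that $\Psi$ is a legitimate energy solution with singular weight. For $\Psi$ this follows from $0\le\Phi\le u^*-\phi_0$, and for $\Psi_1$ from Proposition \ref{superprop}. So \eqref{eq-cor-u-1} holds.

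\textbf{Step 3: consequences.} Evaluating at $r=\delta(t)$ and using $\Psi_1(\delta)\le-2L\zeta_1'$ gives
\[
\Phi(\delta,t+\tau)\le -2L\zeta_1'\,e^{-\mu_1t}\delta^{-4}=2L
\]
by \eqref{defdelta}. Rewriting in the original time gives \eqref{eq-cor-u-2}. For the global bound we translate back: $\Phi=e^{-\mu_1 t}r^{-4}\Psi$, and $r^{-4}\psi_1$ corresponds to $\phi_1$ up to the constant in Proposition \ref{evprop}(iii) (here $\gamma=4$). We then bound the terms of $\Psi_1$ one by one:
\begin{itemize}
\item $\zeta_1'\xi_1\le0$ by \eqref{px1}, together with $\zeta_1'<0$;
\item $\nu_1=\nu_0+o(1)$, which converts $\nu_0$ into the constant $\nu$ after normalization.
\end{itemize}
Finally we absorb the shift $\tau$ into constants. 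This uses the fact that $\zeta_1(t-\tau)\simeq\zeta_1(t)$ and $\zeta_1'(t-\tau)\simeq\zeta_1'(t)$, which follows from the asymptotics of Lemma \ref{zetalem}, including the case $\mu_1=0$ where $e^{-(2(t-\tau)/b)^{1/2}}/e^{-(2t/b)^{1/2}}\to1$. Also $\Gamma(r,t-\tau)$ is comparable to $\Gamma(r,t)$ up to bounded terms, which are absorbed into the $\phi_1$ term using \eqref{pGam}.
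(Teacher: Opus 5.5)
Your Steps 1 and 2 and the derivation of \eqref{eq-cor-u-2} follow the paper's proof and are sound. The energy argument with $w=(\Psi-\Psi_1)_+$, using $\min\{A,B\}\le A$ and the monotonicity of $\cF$, is a correct justification of the comparison principle that the paper only cites. One small correction: \eqref{px1} gives $\zeta_1'(\xi_1+\frac{M_2}{2}\psi_1)\le 0$, not $\zeta_1'\xi_1\le 0$.

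The gap is in the final step. There you absorb the time shift $\tau$ using only the constant-factor comparabilities $\zeta_1(t-\tau)\simeq\zeta_1(t)$ and $\zeta_1'(t-\tau)\simeq\zeta_1'(t)$. You also replace $\nu_1$ by $\nu_0$ using only $\nu_1=\nu_0+o(1)$. Neither works, because the profile $\zeta_1\psi_1+\zeta_1'\nu_0\Gamma$ is a near-cancellation. For $r<\delta(t)$ one has $\Gamma=-\log\delta$, and by the design of $\zeta_1$ (Lemma \ref{zetalem}, proof of Proposition \ref{superprop})
\[
\zeta_1\psi_1+\zeta_1'\nu_0\Gamma=|\zeta_1'|\Bigl(-\frac{\zeta_1}{\zeta_1'}\psi_1+\nu_0\log\delta\Bigr)=(L+o(1))|\zeta_1'| .
\]
Each of the two terms separately is of size $|\zeta_1'|\,|\log\delta|$, i.e.\ $\sim t|\zeta_1'|$ if $\mu_1>0$ and $\sim t^{1/2}|\zeta_1'|$ if $\mu_1=0$. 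Constant-factor comparability therefore only bounds $\Psi_1(r,t-\tau)-\Psi_1(r,t)$ near the origin by $O(|\zeta_1'|\,|\log\delta|)$. That is not $\le CL|\zeta_1'|$ with $C$ independent of $t$. Ratios tending to $1$ do not help either unless the rate is $O(1/|\log\delta|)$. You also cannot absorb errors into the $\phi_1$ term alone, since the target contains the negative $\Gamma$-term. The same objection applies to $\nu_1=\nu_0+o(1)$: the error $|\zeta_1'|\,|\nu_1-\nu_0|\,\Gamma$ is harmless only because Lemma \ref{nu-1-lem} gives $|\nu_1-\nu_0|=O(\delta^2)$, which beats $\Gamma\lesssim|\log\delta|\,\psi_1$.

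What is missing is an additive estimate at the scale $|\zeta_1'|\psi_1$. The paper writes $\Psi_1(r,t-\tau)\le\Psi_1(r,t)+\tau|\partial_t\Psi_1(r,t_0)|$ with $t_0\in[t-\tau,t]$ and shows
\[
|\partial_t\Psi_1(r,t_0)|\le C|\zeta_1'(t_0)|\psi_1\Bigl(1+\Bigl|\frac{\zeta_1''}{\zeta_1'}\log\delta\Bigr|+\frac{|\delta'|}{\delta}\Bigr)\le C|\zeta_1'(t)|\psi_1 .
\]
This rests on the following quantitative facts:
\begin{enumerate}
\item $|\zeta_1''/\zeta_1'|\lesssim t^{-1}$ (resp.\ $t^{-1/2}$), against $|\log\delta|\lesssim t$ (resp.\ $t^{1/2}$);
\item $|\delta'/\delta|=O(1)$;
\item the bounds on $\nu_1'$ and $\xi_{1,t}$ from Lemma \ref{nu-1-lem}.
\end{enumerate}
The error is then absorbed into $\Psi_1(r,t)$ through the lower bound $\Psi_1\ge\frac{L|\zeta_1'|}{2\psi_1(0)}\psi_1$ of Proposition \ref{superprop}, giving $\Psi_1(r,t-\tau)\le C\Psi_1(r,t)$. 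Combining this with $\Psi_1\le 2(\zeta_1\psi_1+\zeta_1'\nu_0\Gamma)$ yields the stated upper bound. That last inequality again needs the rate $O(\delta^2)$ and the same lower bound.
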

\begin{proof}
Thanks to \eqref{ue}, one has
\begin{equation*}
  \Psi(r,\hat{T},\tau)=e^{\mu_1 \hat{T}}r^4\Phi(r,\hat{T}+\tau)\to 0\quad \text{as $\tau\to\infty$}
\end{equation*}
in $L^{\infty}(B_{1})\cap C^{1}_{\mathrm{loc}}(B_{1}\setminus\{0\})$.
Then, it follows from Proposition \ref{superprop} that $\Psi(r,\hat{T},\tau)\le \Psi_{1}(r,\hat{T})$ in $B_{1}^2$
for a sufficiently large fixed parameter $\tau$. From the fact and Proposition \ref{superprop}, we obtain \eqref{eq-cor-u-1} by using a comparison principle\footnote{Note that the comparison principle holds since $\cF(\Psi)$ is non-decreasing in $\Psi$.}.

In addition, for any $t>\tau + \hat{T}$, we get
\begin{equation*}
\Phi(r,t)\le e^{-\mu_1 (t-\tau)}r^{-4}\Psi(r,t-\tau,\tau)\le  e^{-\mu_1 (t-\tau)}r^{-4} \Psi_{1}(r,t-\tau)
\end{equation*}
in $B_{1}^2$. In particular, \eqref{eq-cor-u-2} follows from Proposition \ref{superprop} and \eqref{defdelta}.

Now, thanks to Proposition \ref{superprop}, Lemma \ref{zetalem} and \eqref{px1}, we obtain
\begin{align*}
\Psi_1= \zeta_1(t)\psi_1+\zeta_1'(t)\nu_0\Gamma&-\zeta_1'(t)(\nu_0-\nu_1(t))\Gamma\\
&=\zeta_1(t)\psi_1+\zeta_1'(t)\nu_0\Gamma+O(|\zeta_1'| \delta^2\log \delta\psi_1)\\
&\le 2(\zeta_1 (t)\psi_1+\zeta_1'(t)\nu_0\Gamma).
\end{align*}
Thus, it follows from Proposition \ref{evprop} and \eqref{defb} that
\begin{equation*}
e^{-\mu_1 t} r^{-4}\Psi_{1}(r,t)\le C e^{-\mu_1 t} (\zeta_1\phi_1+\zeta_1'(t)\nu r^{-4}\Gamma) \quad \text{in $B_1^2$}
\end{equation*}
for any $t$ sufficiently large with some $C>0$ independent of $t$.
Therefore, it suffices to prove 
\begin{equation*}
\Psi_{1}(r,t-\tau)\le C \Psi_1(r, t) \quad \text{in $B_{1}^2$}
\end{equation*}
for any $t$ sufficiently large, in order to complete the proof. By the mean-value theorem, we have
\begin{equation*}
\Psi_1(r,t-\tau)\le \tau |\Psi_1'(r,t_0)|+\Psi_1(r,t) \quad \text{in $B_{1}^2$}
\end{equation*}
with some $t-\tau\le t_0\le t$. 
By using Lemmata \ref{zetalem}, \ref{nu-1-lem}, Proposition \ref{superprop}, and  \eqref{pGam}, \eqref{defdelta}, \eqref{defm1}, we obtain
\begin{align*}
|\Psi_1'(t_0)|&\le C |\zeta_1'(t_0)|\psi_1+|\zeta_1''(t_0)\nu_1(t_0)+\zeta_1'(t_0)\nu_1'(t_0)||\Gamma(t_0)|\\
&\qquad\qquad+ \nu_1(t_0)|\zeta_1'(t_0)||\Gamma_t|+ |\zeta''_{1}(t_0)|\psi_1+ |\zeta_1'(t)\partial_t \xi_{1}(t_{0})|\\
&\le C |\zeta_1'(t_0)|\psi_1(1+|\frac{\zeta_1''(t_0)}{\zeta_1'(t_0)}\log \delta|+
\delta^{-1}|\delta'|)\le C \Psi_1(r,t) \quad \text{in $B_{1}^2$.}
\end{align*}
Hence, we obtain the assertion.
Therefore, the proof is completed.
\end{proof}
Next, we construct a sub-solution. We define
\begin{equation}
\label{defnu2}
\frac{1}{\nu_2(t)}:=\int_{B_{1}^{2}}(\frac{2\zeta_2''}{\zeta_{2}'}-(K+\mu_1))\log r \psi_1\,dx=\frac{1}{\nu_0}+\int_{B_{1}^{2}}\frac{2\zeta_2''(t)}{\zeta_{2}'(t)}\log r \psi_1\,dx.
\end{equation}
Then, for any fixed $t>2$, we can define $\xi_2\in H^{1}_{0}(B_{1}^2)$ as the unique solution to
\begin{equation*}
\cE \xi_2=-\psi_1-\nu_2(t)(K+\mu_1)\log r+2\nu_2(t)\frac{\zeta_2''(t)}{\zeta_2'(t)}\log r\hspace{2mm} \text{in $B_{1}^2$,} \quad \xi_2=0 \hspace{2mm} \text{on $\partial B_{1}^2$}
\end{equation*}
with $\xi_2\perp \psi_1$ in $L^2(B_{1}^2)$. We first introduce the following.
\begin{lem}
\label{nu-2-lem}
It follows that $\nu_2\in C^1(1,\infty)$ and
\begin{equation*}
    \text{$\nu_2=\nu_0 +O(|\zeta_2''/\zeta_2'|)$ \h and \h $|\nu_2'|=O(t^{-1}|\zeta_2''/\zeta_2'|)$\h as $t\to\infty$. }
\end{equation*}
In addition, $\xi_2\in C^{1}((1,\infty); H_{0}^1 (B_1^2))$ and $\xi_2 (r,t), \xi_{2,t}(r,t)\in C^{1}_{0}(\overline{B_{1}^2})$ for any $t>1$. Moreover, we have $\lVert \xi_2\rVert_{C^1(B_{1}^2)}<C$ with some $C>0$ independent of $t$. Finally, $|\xi_2|\le C\psi_1$ in $B_{1}^2$ for any $t$ sufficiently large and 
\begin{equation*}
\lVert \xi_2\rVert_{C^1(B_{1}^2)}<C, \quad |\xi_2|\le C\psi_1 \quad \text{in $B_{1}^2$}\quad  \text{and}\quad |\xi_{2,t}|\le -Ct^{-1}\frac{\zeta_2''}{ \zeta_2'}\psi_1 \quad \text{in $B_{1}^2$}
\end{equation*}
for any sufficiently large $t$,
where $C>0$ is independent of $t$.
\end{lem}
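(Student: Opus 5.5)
I will follow the proof of Lemma \ref{nu-1-lem}, replacing $\Gamma$ by $-\log r$. The new feature is that the forcing term carries the coefficient $\zeta_2''/\zeta_2'$. By Lemma \ref{zetalem} this coefficient tends to $0$; more precisely $|\zeta_2''/\zeta_2'|\simeq t^{-1}$ if $\mu_1>0$ and $\simeq t^{-1/2}$ if $\mu_1=0$. Also $\zeta_2'\ne 0$ for $t>1$ (this is read off the explicit formula), so $\zeta_2''/\zeta_2'\in C^1(1,\infty)$.

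\textbf{Step 1: $\nu_2$.} Since $\psi_1\in C^2(\overline{B_1^2})$ and $\log r\in L^p(B_1^2)$ for every $p<\infty$, the quantity $A:=\int_{B_1^2}\log r\,\psi_1\,dx$ is a finite constant. Then \eqref{defnu2} together with \eqref{pnu0} gives
\[
\frac{1}{\nu_2}=\frac{1}{\nu_0}+\frac{2\zeta_2''}{\zeta_2'}A .
\]
Because $\zeta_2''/\zeta_2'\to0$, we get $\nu_2=\nu_0+O(|\zeta_2''/\zeta_2'|)$ and $\nu_2$ is bounded for large $t$. For the $C^1$ claim on all of $(1,\infty)$, I would check directly from the explicit $\zeta_2$ that $1/\nu_2$ does not vanish. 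Failing that, I would only assert the statement for large $t$, which is all that is used later.

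Differentiating the identity,
\[
\nu_2'=-2A\,\nu_2^2\Bigl(\frac{\zeta_2''}{\zeta_2'}\Bigr)'.
\]
The last assertion of Lemma \ref{zetalem} then yields $|\nu_2'|=O(t^{-1}|\zeta_2''/\zeta_2'|)$.

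\textbf{Step 2: $\xi_2$ and its bounds.} Let
\[
g(t):=-\psi_1-\nu_2(K+\mu_1)\log r+2\nu_2\frac{\zeta_2''}{\zeta_2'}\log r .
\]
By construction $\int g\,\psi_1=0$. Indeed, $\int\psi_1^2=1$ and, by \eqref{defnu2}, $\nu_2\int\bigl(2\frac{\zeta_2''}{\zeta_2'}-(K+\mu_1)\bigr)\log r\,\psi_1=1$. Since $K\in C^0$ and $\log r\in L^3(B_1^2)$, we have $g\in L^3$ with $\|g\|_{L^3}\le C$ uniformly for large $t$. Lemma \ref{well-def-lem} then gives $\xi_2(\cdot,t)\in C^1_0(\overline{B_1^2})$, $\|\xi_2\|_{C^1}\le C$, and $|\xi_2|\le \hat q\,\|g\|_{L^3}\psi_1\le C\psi_1$.

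\textbf{Step 3: time regularity.} The map $t\mapsto g(t)\in L^3$ is $C^1$, with
\[
g_t=\Bigl(-\nu_2'(K+\mu_1)+2\Bigl(\nu_2\frac{\zeta_2''}{\zeta_2'}\Bigr)'\Bigr)\log r ,
\]
and $g_t\perp\psi_1$ because $\int g\psi_1\equiv0$. The solution map $g\mapsto\xi_0$ of \eqref{xi_0} is bounded linear from $\{g\in L^3:\ g\perp\psi_1\}$ into $H^1_0\cap C^1_0$ by Lemma \ref{well-def-lem}. Hence $\xi_2\in C^1((1,\infty);H^1_0)$ and $\xi_{2,t}$ solves \eqref{xi_0} with $g=g_t$.

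By Step 1 and Lemma \ref{zetalem},
\[
\Bigl|\Bigl(\nu_2\frac{\zeta_2''}{\zeta_2'}\Bigr)'\Bigr|\le|\nu_2'|\Bigl|\frac{\zeta_2''}{\zeta_2'}\Bigr|+\nu_2\Bigl|\Bigl(\frac{\zeta_2''}{\zeta_2'}\Bigr)'\Bigr|=O\Bigl(t^{-1}\Bigl|\frac{\zeta_2''}{\zeta_2'}\Bigr|\Bigr),
\]
so $\|g_t\|_{L^3}\le Ct^{-1}|\zeta_2''/\zeta_2'|$. Lemma \ref{well-def-lem} then gives
\[
|\xi_{2,t}|\le Ct^{-1}\Bigl|\frac{\zeta_2''}{\zeta_2'}\Bigr|\psi_1=-Ct^{-1}\frac{\zeta_2''}{\zeta_2'}\psi_1,
\]
using $\zeta_2''>0>\zeta_2'$.

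\textbf{Main obstacle.} The only delicate point is the derivative bound, which depends on the estimate $(\zeta_2''/\zeta_2')'=O(t^{-1}|\zeta_2''/\zeta_2'|)$ from Lemma \ref{zetalem}. This has to hold in both cases, including $\mu_1=0$, where the exponential factors cancel in the ratio. Everything else is a direct transcription of the proof of Lemma \ref{nu-1-lem}.
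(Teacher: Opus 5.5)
Your proposal is correct and follows essentially the same route as the paper: the identity $1/\nu_2=1/\nu_0+2A\,\zeta_2''/\zeta_2'$ combined with the last estimate of Lemma \ref{zetalem} handles $\nu_2$ and $\nu_2'$, and Lemma \ref{well-def-lem} applied to the forcing $g$ and to $g_t$ handles $\xi_2$ and $\xi_{2,t}$. One small caution: your assertion that $\zeta_2'\neq 0$ on all of $(1,\infty)$ fails in general, because for small $\mu_1>0$ the $\log t/t$ correction in the exponent makes $\zeta_2$ non-monotone on an intermediate range, so your fallback of working only for large $t$ is the right one, and the paper's claim $\nu_2\in C^1(1,\infty)$ should be read the same way.
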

\begin{proof}
It easily follows from \eqref{defnu2} that $\nu_2=\nu_0+O(|\zeta_2''/\zeta_2'|)$ and $|\nu_2'|=O(|(\zeta_2''/\zeta_2')'|)$ as $t\to\infty$. By Lemma \ref{zetalem}, we obtain $|\nu_2'|=O(t^{-1}|\zeta_{2}''/\zeta_2'|)$ as $t\to\infty$. Hence, it follows that
\begin{equation*}
    \lVert -\psi_1-\nu_2(t)(K+\mu_1)\log r+2\nu_2(t)\frac{\zeta_2''(t)}{\zeta_2'(t)}\log r\rVert_{L^3(B_{1}^{2})}<C
\end{equation*}
and
\begin{equation*}
\lVert -\nu_2'(t)(K+\mu_1)\log r+2(\nu_2'(t)\frac{\zeta_2''(t)}{\zeta_2'(t)}+\nu_2(\zeta_2''/\zeta_2')')\log r\rVert_{L^3(B_{1}^2)}<C t^{-1}|\zeta_2''/\zeta_2'|
\end{equation*}
with some $C>0$ independent of $t$.
In addition, we verify that $\xi_2\in C^{1}((1,\infty); H_{0}^1 (B_{1}^2))$ and $\xi_{2,t}$ satisfies
\begin{equation*}
\cE \xi_{2,t}=-\nu_2'(t)(K+\mu_1)\log r+2(\nu_2'(t)\frac{\zeta_2''(t)}{\zeta_2'(t)}+\nu_2(\zeta_2''/\zeta_2')')\log r,
\end{equation*}
$\xi_{2,t}=0$ on $\partial B_{1}$ and $\xi_{2,t}\perp \psi_1$ in $L^2(B_{1}^2)$. By Lemma \ref{well-def-lem}, we obtain $\lVert \xi_2\rVert_{C^1(B_{1}^2)}=O(1)$,
$\xi_2, \xi_{2,t}\in C^1_{0}(\overline{B_1^2})$, $|\xi_2|=O(\psi_1)$ and $|\xi_{2,t}|=O(-t^{-1}(\zeta_2''/\zeta_2')\psi_1)$ as $t\to\infty$.
\end{proof}
Thanks to Lemma \ref{nu-2-lem}, we can choose $l_2>0$ so that 
\begin{equation}
\label{px2}
  -\frac{3 l_2}{2}\psi_1 \le \xi_2-l_2\psi_1\le  -\frac{l_2}{2} \psi_1 \quad \text{in $B_{1}^2$}
\end{equation}
for any $t$ sufficiently large. We define
\begin{equation}
\label{equPsi}
\uPsi=\zeta_2\psi_1+\zeta'_2 (t)(-\nu_2(t)\log r+\xi_2-l_2\psi_1).
\end{equation}
Then, we introduce the following properties of $\uPsi$.
\begin{lem}
\label{rholem}
There exist $T_{*}>0$ and $C>0$ independent of $t$ such that $\uPsi$ has the unique zero $\rho=\rho(t)$ for any $t>T_{*}$. More precisely, $\uPsi<0$ in $(0,\rho)$ and $\uPsi>0$ in $(\rho,1)$.
In addition, there exists $C>0$ independent of $t$ such that
\begin{equation}
\label{prho}
-\log \rho \le \frac{\mu_1 t}{4}-\frac{1}{4}\log \zeta_2-\frac{5}{4}\log t+C
\end{equation}
and
\begin{equation*}
\uPsi(r)<\frac{\nu_0}{2}\zeta_2'<0\quad\text{in $B_{\rho/2}^2$,} \quad e^{-\mu_1 t}r^{-4}(\uPsi(r)- \frac{\nu_0}{4\psi_1(0)}\zeta_2'\psi_1)\le Ct^{-5}\psi_1(r)\quad\text{in $B_{1}^2$}
\end{equation*}
for any $t>T_{*}$. 
\end{lem}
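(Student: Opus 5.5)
We study $\uPsi$ as a radial function of $r\in(0,1)$ at a fixed large $t$. Write
\[
\uPsi(r,t)=-\zeta_2'(t)\Bigl(-\frac{\zeta_2}{\zeta_2'}\psi_1+\nu_2\log r-\xi_2+l_2\psi_1\Bigr),
\]
and recall that $-\zeta_2'>0$ and $\nu_2=\nu_0+o(1)$ (Lemma \ref{nu-2-lem}). By \eqref{px2}, the bracket lies between
\[
-\frac{\zeta_2}{\zeta_2'}\psi_1+\nu_2\log r+\frac{l_2}{2}\psi_1
\quad\text{and}\quad
-\frac{\zeta_2}{\zeta_2'}\psi_1+\nu_2\log r+\frac{3l_2}{2}\psi_1 .
\]
By Lemma \ref{zetalem}, $-\zeta_2/\zeta_2'\to\infty$ and equals $b\mu_1t-b\log\zeta_2-5b\log t+O(t^{-1/2})$.

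\emph{Unique zero.} Near $r=1$ we use $\uPsi=0$ on $\partial B_1^2$ and the Hopf lemma: $\psi_1'(1)<0$, and $\uPsi'(1)=\zeta_2\psi_1'(1)+O(|\zeta_2'|)<0$ because $|\zeta_2'|/\zeta_2\to0$ and $\|\xi_2\|_{C^1}\le C$. Hence $\uPsi>0$ on $(1-\epsilon,1)$ for some fixed $\epsilon$. On $[t^{-1},1-\epsilon]$ we have $\psi_1\ge c$, and $|\log r|\le\log t\ll -\zeta_2/\zeta_2'$, so the bracket is positive there. On $(0,t^{-1})$ we show monotonicity. Differentiating,
\[
\uPsi'=\zeta_2\psi_1'+\zeta_2'\bigl(-\nu_2r^{-1}+\xi_2'-l_2\psi_1'\bigr).
\]
Here $\psi_1'=O(r)$ because $\psi_1\in C^2$ is radial, and $\xi_2'$ is bounded. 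Thus $\uPsi'\ge -\nu_2\zeta_2' r^{-1}-C\zeta_2 r-C|\zeta_2'|$, which is positive whenever $r^2\le c|\zeta_2'|/\zeta_2\sim 1/t$; this condition holds for $r<t^{-1}$. Since $\uPsi\to-\infty$ as $r\to0$ because of the $\log r$ term, $\uPsi$ is strictly increasing on $(0,t^{-1})$ with positive value at $t^{-1}$. This gives exactly one zero $\rho$, with the stated sign pattern.

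\emph{Bound \eqref{prho}.} At $r=\rho$ the bracket vanishes. Using the lower bound from \eqref{px2} and $\psi_1(\rho)=\psi_1(0)+O(\rho^2)$, we get
\[
\nu_2(-\log\rho)\ge -\frac{\zeta_2}{\zeta_2'}\psi_1(0)-C,
\]
and the upper bound gives the reverse inequality up to a constant. The constants satisfy $b\psi_1(0)/\nu_0=1/4$ by \eqref{defb}. The correction $\nu_2-\nu_0=O(|\zeta_2''/\zeta_2'|)$ multiplies $-\log\rho=O(t)$; this is $O(1)$ because $|\zeta_2''/\zeta_2'|=O(t^{-1})$ when $\mu_1>0$ and $O(t^{-1/2})$ when $\mu_1=0$. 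In the latter case we also have $-\log\rho=O(t^{1/2})$, since $\mu_1=0$ leaves only $-b\log\zeta_2\sim t^{1/2}$. Substituting the expansion of $-\zeta_2/\zeta_2'$ then yields \eqref{prho}. I expect this bookkeeping of $\nu_2$ versus $\nu_0$ in the two cases to be the main delicate point.

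\emph{Remaining estimates.} On $B^2_{\rho/2}$ the bracket is at most
\[
\nu_2\log(r/\rho)+C\rho^2\frac{\zeta_2}{|\zeta_2'|}+C\le -\nu_2\log2+C',
\]
and we need this to be below $-\nu_0/2$. Because $\log 2<1/2$ fails to leave margin against $C'$, I would instead write $\uPsi(r)=\uPsi(r)-\uPsi(\rho)$ and use the derivative bound $\uPsi'\ge \frac{\nu_0}{2}|\zeta_2'|r^{-1}$ on $(0,\rho)$ from the previous step. This gives $\uPsi(r)\le -\frac{\nu_0}{2}|\zeta_2'|\log2\cdot\frac{\nu_2}{\nu_0}\cdots$. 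If the constant still falls short, I would sharpen the derivative bound to $(\nu_0-o(1))|\zeta_2'|/r$, giving $\uPsi\le (\nu_0-o(1))\log 2\,\zeta_2'<\frac{\nu_0}{2}\zeta_2'$ since $\log2>1/2$.

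For the last inequality, write
\[
\uPsi-\frac{\nu_0}{4\psi_1(0)}\zeta_2'\psi_1\le \zeta_2\psi_1+|\zeta_2'|\bigl(\nu_2|\log r|+C\psi_1\bigr)
\]
and split into two regions. For $r\ge\rho$ we use $|\log r|\le|\log\rho|\le C(-\zeta_2/\zeta_2')$, so the right side is at most $C\zeta_2\psi_1$. Then $e^{-\mu_1t}r^{-4}\zeta_2\le e^{-\mu_1t}\rho^{-4}\zeta_2\le Ct^{-5}$ by \eqref{prho}, which shows \eqref{prho} is calibrated precisely for this. For $r<\rho$, $\uPsi<0$, and the term $-\frac{\nu_0}{4\psi_1(0)}\zeta_2'\psi_1\le C|\zeta_2'|$ together with $r^{-4}$ would blow up. So here I would use that $\uPsi$ is increasing and bounded by $\uPsi(\rho/2)<\frac{\nu_0}{2}\zeta_2'$, which dominates $\frac{\nu_0}{4\psi_1(0)}\zeta_2'\psi_1\approx\frac{\nu_0}{4}\zeta_2'$. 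The left side is therefore negative there, and the inequality holds trivially; the annulus $[\rho/2,\rho]$ is handled via the same monotonicity and the value at $\rho$.
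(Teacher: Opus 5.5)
Your argument is correct and follows essentially the paper's proof: monotonicity of $\uPsi$ near the origin (on $(0,t^{-1})$, where the paper uses $B^2_{(\zeta_2'/\zeta_2)^2}$) together with positivity away from it; reading off $-\log\rho$ from $\uPsi(\rho)=0$ using $b\psi_1(0)/\nu_0=1/4$ and the fact that $(\nu_2-\nu_0)\log\rho=O(1)$; monotonicity and $\uPsi(\rho)=0$ to get $\uPsi\le(\nu_0-o(1))\log 2\,\zeta_2'<\frac{\nu_0}{2}\zeta_2'$ on $B^2_{\rho/2}$; and splitting the final estimate at $\rho$. The one step to tighten is the region $r\ge\rho$ in the last estimate: there $|\zeta_2'|\nu_2|\log r|\le C\zeta_2$ loses the factor $\psi_1(r)$ needed near $r=1$, so drop that term instead, since $-\nu_2\zeta_2'\log r\le 0$ on $B_1^2$, which gives $\uPsi\le\zeta_2\psi_1+\zeta_2'(\xi_2-l_2\psi_1)\le 2\zeta_2\psi_1$ as in the paper (likewise, bound $\uPsi'<0$ uniformly on $[1-\epsilon,1]$ rather than only at $r=1$).
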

\begin{proof}
By differentiating \eqref{equPsi} with respect to $r$ and using Lemmata \ref{zetalem} and \ref{nu-2-lem}, we obtain
\begin{equation}
\label{hojo-1}
\uPsi'=\zeta_{2}\psi_{1}'(r)+\zeta_{2}'(t)(-\nu_2(t)r^{-1}+\xi_{2}'-l_2\psi_{1}')>0 \quad \text{in $B_{(\zeta_2'/\zeta_2)^2}^2$}
\end{equation}
for any $t$ sufficiently large. In addition, by Lemma \ref{zetalem}, we have
\begin{equation*}
\uPsi\ge \bigl(\zeta_2(t)-O(|\zeta_{2}'\log( -\zeta'_{2}/\zeta_2)|)\bigr) \psi_{1}\ge \bigl(\zeta_2-O(|\zeta_{2}'\log t|)\bigr)\psi_1\ge \frac{\zeta_{2}}{2}\psi_1 \quad \text{in $B_{1}^2 \setminus B_{(\zeta_2'/\zeta_2)^2}^2$}
\end{equation*}
for all $t$ sufficiently large.
Hence, there exists the unique zero $\rho$ of 
$\uPsi$ satisfying $0<\rho<(\zeta_{2}'/\zeta_2)^2$. 
Then, it follows from Lemmata \ref{zetalem}, \ref{nu-2-lem} and \eqref{defb} that
\begin{equation*}
-\log \rho =\frac{1}{(\nu_0+O(|\zeta_{2}''/\zeta_{2}'|))}(-\frac{\zeta_{2}}{\zeta'_{2}}(\psi_{1}(0)+O(\rho))+O(1))\le \frac{\mu_1 t}{4}-\frac{1}{4}\log \zeta_2 -\frac{5}{4}\log t+C
\end{equation*}
for any $t$ sufficiently large. Hence, we obtain \eqref{prho}. 
It follows from Lemma \ref{zetalem} and \ref{nu-2-lem} that
\begin{equation}
\label{atarimae}
    \uPsi\le \zeta_2 \psi_1+\zeta_2'(t)(\xi_2-l_2\psi_1)\le 2\zeta_2 \psi_1 \quad \text{in $B_{1}^{2}$}
\end{equation}
for any $t$ sufficiently large. Moreover,
thanks to \eqref{hojo-1} and Lemmata \ref{nu-2-lem} and \ref{zetalem}, we get
\begin{align*}
\uPsi(r)\le \uPsi(\frac{\rho}{2})-\uPsi(\rho)&=\zeta_2(\psi_{1}(\rho/2)-\psi_{1}(\rho))+\nu_2(t)\zeta_2'(t)\log 2\\
&\quad+\zeta'_2(t)(\xi_2(\rho/2, t)-\xi_2(\rho,t))-
l_2\zeta_2'(t)(\psi_1(\rho/2)-\psi_1(\rho))\\
&=\zeta'_{2}(t) (\nu_0+O(|\zeta_2''/\zeta_2'|))\log 2 +O(\zeta_2\rho)\le \frac{\nu_0}{2}\zeta'_{2}
\end{align*}
in $B_{\rho/2}^2$ for any $t$ sufficiently large. By combining the above estimate
and \eqref{prho}, \eqref{atarimae}, we have
\begin{equation*}
e^{-\mu_1 t}r^{-4}(\uPsi(r)- \frac{\nu_0}{4\psi_1(0)}\zeta_2'\psi_1)\le 16
e^{-\mu_1 t}\rho^{-4}(\uPsi(r)- \frac{\nu_0}{4\psi_1(0)}\zeta_2'\psi_1)\le -Ct^{-5}\psi_{1}(r) \h \text{in $B_{1}^2$}
\end{equation*}
for any $t$ sufficiently large with some $C>0$ independent of $t$.
\end{proof}
We define
\begin{equation*}
R:=\lambda^{*}e^{u^{*}}r^{4}e^{\mu_1 t}\cF(r^{-4}e^{-\mu_1 t}(\uPsi-\frac{\nu_0}{4\psi_1(0)}\zeta'_2(t)\psi_1))  
\end{equation*}
and
\begin{align*}
&G(r,t):=\partial_t R=
\mu_1\lambda^{*}e^{u^{*}+\mu_1 t}r^4\cF(r^{-4}e^{-\mu_1 t}(\uPsi-\frac{\nu_0}{4\psi_1(0)}\zeta'_{2}\psi_1))\\
+&\lambda^{*}e^{u^{*}}(-\mu_1 (\uPsi-\frac{\nu_0}{4\psi_1(0)}\zeta'_{2}\psi_1)+\uPsi_t-\frac{\nu_0}{4\psi_1(0)}\zeta''_{2}\psi_1)\cF'(r^{-4}e^{-\mu_1 t}(\uPsi-\frac{\nu_0}{4\psi_1(0)}\zeta_2'\psi_1)).
\end{align*}
Then, thanks to Lemma \ref{rholem} and the definition of $\cF$, we have $\mathcal{F}=\mathcal{F'}=0$ on $B_{\rho(t)/2}$. Hence, by Lemma \ref{nu-2-lem}, we obtain $R\in W^{1,\infty}_{\mathrm{loc}}((1,\infty); L^{2}(B_{1}))$ and $R_t=G$ in $B_{1}^{2}$ for a.e. $t>1$. Hence, there exists a unique solution $\upsi\in W^{1,\infty}_{\mathrm{loc}}((1,\infty); H^{1}_{0}(B_{1}^2))$ of
\begin{equation*}
\cE \upsi= (R, \psi_1)_{L^2(B_{1}^2)}\psi_{1}-R \quad \text{in $B_{1}^2$,} \quad \text{$\upsi=0$ on $\partial B_{1}^{2}$} \quad \text{$\upsi\perp \psi_{1}$ in $L^{2}(B_{1}^2)$}.
\end{equation*}
Moreover, $\upsi_t$ satisfies
\begin{equation*}
\cE \upsi_t = (G, \psi_1)_{L^2(B_{1}^2)}\psi_{1}-G \quad \text{in $B_{1}^2$,} \quad \text{$\upsi_t =0$ on $\partial B_{1}^{2}$} \quad \text{$\upsi_t \perp \psi_{1}$ in $L^{2}(B_{1}^2)$}.
\end{equation*}
for a.e. $t>1$.
Then, we obtain the following
\begin{lem}
\label{upsilem}
It follows that
$\upsi\in C^{1}_{0}(\overline{B_{1}^2})$ for any sufficiently large $t$ and $\upsi_t \in C^{1}_{0}(\overline{B_{1}^2})$ for a.e. sufficiently large $t$. In addition, we have
\begin{equation*}
\lVert \upsi\rVert_{C^{1}(B_{1}\setminus B_{1/2})}\le C \zeta_2 t^{-5/2}\quad\text{and} \quad |\upsi|\le C \zeta_2 t^{-5/2}\psi_1 \quad \text{in $B_{1}^2$}
\end{equation*}
for any $t$ sufficiently large and
\begin{equation*}
    \lVert \upsi_{t}\rVert_{C^{1}(B_{1}\setminus B_{1/2})}\le C \zeta_2 t^{-5/2}\quad\text{and} \quad |\upsi_{t}|\le C\zeta_2 t^{-5/2}\psi_1 \quad \text{in $B_{1}^2$}
\end{equation*}
for a.e. $t$ sufficiently large, where $C>0$ is independent of $t$. 
\end{lem}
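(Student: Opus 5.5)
The plan is to reduce everything to Lemma \ref{well-def-lem}. Since $\upsi\perp\psi_1$ and the right-hand side $(R,\psi_1)\psi_1-R$ is orthogonal to $\psi_1$, the lemma gives $\lVert\upsi\rVert_{C^1(B_1^2)}\le C\lVert R\rVert_{L^3}$ and $|\upsi|\le C\lVert R\rVert_{L^3}\psi_1$, because $\lVert (R,\psi_1)\psi_1-R\rVert_{L^3}\le C\lVert R\rVert_{L^3}$. The same reasoning applies to $\upsi_t$ with $G$ in place of $R$, for a.e. $t$. The proof therefore amounts to the two bounds
\[
\lVert R(\cdot,t)\rVert_{L^3(B_1^2)}\le C\zeta_2t^{-5/2},\qquad \lVert G(\cdot,t)\rVert_{L^3(B_1^2)}\le C\zeta_2t^{-5/2}.
\]
The $C^1$ bound on $B_1\setminus B_{1/2}$ then follows from the global $C^1$ bound.

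To estimate $R$, set $w:=r^{-4}e^{-\mu_1t}(\uPsi-\frac{\nu_0}{4\psi_1(0)}\zeta_2'\psi_1)$. By Lemma \ref{rholem}, $w\le Ct^{-5}\psi_1\le Ct^{-5}$ in $B_1^2$, and $w\le 0$ in $B_{\rho/2}^2$. Hence $w_+$ is small and we are in the quadratic regime of \eqref{pF}, where $\cF(w)\le w_+^2/2$. Since $\lambda^*e^{u^*}r^4=(16+Kr^2)r^2\le Cr^2$, we get
\[
|R|\le Cr^2e^{\mu_1t}w_+^2=Cr^{-6}e^{-\mu_1t}\bigl(\uPsi-\tfrac{\nu_0}{4\psi_1(0)}\zeta_2'\psi_1\bigr)_+^2,
\]
supported in $r\ge\rho/2$. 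I will use two bounds together. The first is $(\uPsi-\cdots)_+\le C\zeta_2\psi_1$, which comes from \eqref{atarimae} and $|\zeta_2'|\ll\zeta_2$. The second is $w_+\le Ct^{-5}$. Interpolating gives
\[
|R|\le Cr^2e^{\mu_1t}\min\{t^{-10},\,r^{-8}e^{-2\mu_1t}\zeta_2^2\}.
\]
The crossover radius is $r_0^4\approx e^{-\mu_1t}\zeta_2t^5$, which is comparable to $\rho^4$ by \eqref{prho}. The $L^3$ norm is then computed directly. On $r\ge r_0$, we have $\int r^{6-24}e^{-3\mu_1t}\zeta_2^6\,r\,dr\approx r_0^{-16}e^{-3\mu_1t}\zeta_2^6$. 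On $r<r_0$ the contribution is $t^{-30}e^{3\mu_1t}r_0^8$. Inserting $r_0$, both contributions are $\approx e^{\mu_1t}\zeta_2^2t^{-20}$, so $\lVert R\rVert_{L^3}\lesssim (e^{\mu_1 t}\zeta_2^2t^{-20})^{1/3}$.

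This naive count is not obviously $\le\zeta_2t^{-5/2}$, since $e^{\mu_1t}$ is large. The main obstacle is therefore to exploit the fact that $\uPsi$ vanishes at $\rho$ and is only of size $|\zeta_2'|\log(r/\rho)$ just beyond $\rho$, rather than $\zeta_2$. On the annulus $\rho<r<1$, I will use the expansion $\uPsi\approx\zeta_2'\nu_0\log(\rho/r)+O(|\zeta_2'|)\psi_1$ (valid for $r\le(\zeta_2'/\zeta_2)^2$), together with $\uPsi\le2\zeta_2\psi_1$ further out. The first gives $(\uPsi)_+\lesssim|\zeta_2'|(\log(r/\rho)+1)$ near $\rho$, and in the $L^3$ integral this replaces $\zeta_2$ by $|\zeta_2'|\log$ at the critical scale $r\sim\rho$. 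Using $\rho^{-4}\approx e^{\mu_1t}\zeta_2^{-1}t^{-5}$ from \eqref{prho} and $|\zeta_2'|\lesssim\zeta_2t^{-1}$ from Lemma \ref{zetalem}, the bound becomes $\lVert R\rVert_{L^3}\lesssim\rho^{-4/3}e^{-\mu_1t}|\zeta_2'|^2\log^{C}t\cdot\rho^{2/3}$. I expect this to reduce to $\zeta_2t^{-5/2}$ after tracking powers, and I will first check that exponent count carefully. If it falls short, I will redistribute using the explicit asymptotics of $\zeta_2$ in both cases $\mu_1>0$ and $\mu_1=0$.

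For $G$, I will use $0\le\cF'(w)\le w_+$ and the same supports. The first term of $G$ is $\mu_1R$. In the second term, the bracket contains $\uPsi$, $\uPsi_t$ and $\zeta_2''\psi_1$. The quantity $\uPsi_t$ is controlled via Lemmata \ref{zetalem} and \ref{nu-2-lem}, which give $|\uPsi_t|\le C(|\zeta_2'|\psi_1+|\zeta_2''\log r|+|\zeta_2'|t^{-1})$. On the support $r\ge\rho/2$ we have $|\log r|\lesssim t$, so all these terms are bounded by the same quantities as $\uPsi-\cdots$ up to constants. This yields $|G|\le C\,r^{-6}e^{-\mu_1 t}(\cdot)_+(\cdot)$ of the same type as $R$, and the identical $L^3$ computation gives the bound on $G$.
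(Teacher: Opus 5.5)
Your reduction to Lemma \ref{well-def-lem} fails. The two inputs it needs, $\lVert R\rVert_{L^3}\le C\zeta_2t^{-5/2}$ and $\lVert G\rVert_{L^3}\le C\zeta_2t^{-5/2}$, are false, not merely hard to prove.

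Consider the annulus $A_\rho:=\{\rho<r<2\rho\}$. There $\uPsi>0$ by Lemma \ref{rholem}, so $\uPsi-\frac{\nu_0}{4\psi_1(0)}\zeta_2'\psi_1\ge\frac{\nu_0}{8}|\zeta_2'|$. Since your $w$ satisfies $0<w\le 2$ there, \eqref{pF} and $\lambda^*e^{u^*}r^4=16r^2+Kr^4\ge 8r^2$ give $R\ge c\,r^{-6}e^{-\mu_1t}|\zeta_2'|^2$ on $A_\rho$. By H\"older,
\[
\lVert R\rVert_{L^1(A_\rho)}\ge c\,\rho^{-4}e^{-\mu_1t}|\zeta_2'|^2,\qquad
\lVert R\rVert_{L^3(B_1^2)}\ge |A_\rho|^{-2/3}\lVert R\rVert_{L^1(A_\rho)}\ge c\,\rho^{-4/3}\,\rho^{-4}e^{-\mu_1t}|\zeta_2'|^2 .
\]
The derivation of \eqref{prho} is actually two-sided, $\rho^{-4}\simeq e^{\mu_1t}\zeta_2^{-1}t^{-5}$. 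So the mass on $A_\rho$ is of order $\zeta_2t^{-7}$ if $\mu_1>0$ and $\zeta_2t^{-6}$ if $\mu_1=0$, which is below the target. But the $L^3$ norm carries the extra factor $\rho^{-4/3}$, which is exponentially large: of order $e^{\mu_1t/3}$, resp.\ $e^{(2t/b)^{1/2}/3}$, up to powers of $t$. Your refinement near $\rho$ only trades $\zeta_2$ for $|\zeta_2'|\log$, so it gains powers of $t$ and cannot remove this factor. The count $\rho^{-4/3}e^{-\mu_1t}|\zeta_2'|^2\rho^{2/3}$ you sketched is not the size of $\lVert R\rVert_{L^3}$; the correct order is $\rho^{-16/3}e^{-\mu_1t}|\zeta_2'|^2$.

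The conclusion you aim for is itself false. From $r\upsi'(r)=\int_0^r s\bigl(R-(R,\psi_1)\psi_1-(K+\mu_1)\upsi\bigr)\,ds$, with $R\ge 0$ and the last two terms contributing only $O(\rho^2)$ times bounded quantities, you get $|\upsi'(2\rho)|\ge c\rho^{-1}\lVert R\rVert_{L^1(A_\rho)}$. This is exponentially larger than $\zeta_2t^{-5/2}$. That is why the lemma asserts a $C^1$ bound only on $B_1\setminus B_{1/2}$, and your final step, which deduces it from a global $C^1$ bound, has nothing to rest on. The same objection applies verbatim to $G$ and $\upsi_t$.

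The missing idea is an estimate for $\cE^{-1}$ that sees only a logarithmically weighted mass of the right-hand side. In two dimensions the Green function is logarithmic, so mass concentrated at scale $\rho$ produces only a loss of $-\log\rho\lesssim t$, not a power of $\rho^{-1}$. The paper's argument runs as follows.

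\begin{enumerate}
\item It controls $R$ in the norm $\lVert r^2(1-\log r)^{5/2}R\rVert_{L^\infty}$. Since $R=0$ in $B_{\rho/2}$, $r^2R\le C\,w_+\bigl(\uPsi-\frac{\nu_0}{4\psi_1(0)}\zeta_2'\psi_1\bigr)_+\le Ct^{-5}\zeta_2$, and $1-\log(\rho/2)\le Ct$, this norm is $O(t^{-5/2}\zeta_2)$.
\item Because $r^{-2}(1-\log r)^{-5/2}\in L^1(B_1^2)$, the weighted norm also controls $\lVert R\rVert_{L^1}$ and $(R,\psi_1)$.
\item The energy estimate, using $\upsi\perp\psi_1$, bounds $(\mu_2-\mu_1)\lVert\upsi\rVert_{L^2}^2$ by $\lVert R\rVert_{L^1}\lVert\upsi\rVert_{L^\infty}$.
\item The radial representation formula of Lemma \ref{apenlem} bounds $\lVert\upsi\rVert_{L^\infty}$ by $\lVert\upsi\rVert_{L^2}$, $|(R,\psi_1)|$ and the weighted norm. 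Absorbing gives $\lVert\upsi\rVert_{L^\infty}\le C\zeta_2t^{-5/2}$.
\item Elliptic regularity is used only on $B_1\setminus B_{1/2}$, where $R=O(e^{-\mu_1t}\zeta_2^2)$. The Hopf lemma then gives $|\upsi|\le C\zeta_2t^{-5/2}\psi_1$.
\item The same scheme with $G$ in place of $R$ gives the bounds for $\upsi_t$.
\end{enumerate}
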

\begin{proof}
By Lemmata \ref{rholem} and \ref{zetalem}, we obtain $R=G=0$ in $B_{\rho/2}^2$ and
\begin{equation*}
     e^{-\mu_1 t}r^{-4}(\uPsi(r)- \frac{\nu_0}{4}\zeta_2'\psi_1)\le -Ct^{-5}\psi_1(r)\le 2 \quad \text{in $B_{1}^2$}
\end{equation*}
for any $t$ sufficiently large. As a result, by the elliptic regularity,
we have $\psi\in C^{1}_{0}(B_{1}^{2})$ for a.e. $t$ sufficiently large and $\psi\in C^{1}_{0}(B_{1}^2)$. Moreover, by using the above estimate, Proposition \ref{prop-2-1}, \eqref{pF}, \eqref{hojo-1}, \eqref{atarimae} and
Lemmata \ref{zetalem}, \ref{rholem}, we have
\begin{equation}
\label{R-2}
   \lVert r^2(-\log r)^{5/2} R\rVert_{L^{\infty}(B_{1}^2)}\le C (\log (\rho/2))^{5/2} r^{-4}e^{-\mu_1 t}(\uPsi-\frac{\nu_0}{4}\zeta_2'\psi_1)^2
\le C t^{-5/2}\zeta_2(t)
\end{equation}
for any large $t$ with some $C>0$ independent of $t$.
As a result, it yields
\begin{equation}
\label{R-3}
\lVert R\rVert_{L^1}\le C\lVert r^2(-\log r)^{5/2} R\rVert_{L^{\infty}(B_{1}^2)} \int_{B_{1}^2}r^{-2}(-\log r)^{-5/2}\,dx \le C t^{-5/2}\zeta_2(t)
\end{equation}
and
\begin{equation}
\label{R-1}
0\le (R,\psi_{1})_{L^2(B_{1}^2)}\le C\lVert R\rVert_{L^{1}(B_{1}^2)}\le 
C t^{-5/2} \zeta_2(t) 
\end{equation}
for any $t$ sufficiently large with some $C>0$ independent of $t$.
Moreover, it follows from \eqref{atarimae}, 
Lemmata \ref{zetalem},
\ref{nu-2-lem} and \ref{rholem} that
\begin{equation*}
\lVert \uPsi_t \chi_{r>\frac{\rho}{2}}\rVert_{L^{\infty}}\le O(|\zeta_{2}'|)+O(-\zeta_{2}''\log \rho)\le C |\zeta_{2}'|
\end{equation*}
for a.e. $t$ sufficiently large and
\begin{equation*}
    \lVert \uPsi(r)\chi_{r>\frac{\rho}{2}}\rVert_{L^{\infty}}\le O(\zeta_2(t))+O(\zeta_2'(t)\log\rho)\le C \zeta_2 
\end{equation*}
for any $t$ sufficiently large, where $C>0$ is a constant independent of $t$.
Thus, by a similar computation, we obtain
\begin{equation}
\label{G-1}
\lVert r^2(-\log r)^{5/2} G \rVert_{L^{\infty}(B_{1}^2)},\h \lVert G\rVert_{L^1(B_{1}^2)}, \h |(G,\psi_{1})_{L^2(B_{1}^2)}|\le Ct^{-5/2}\zeta_2(t) 
\end{equation}
for a.e. $t$ sufficiently large with some $C>0$ independent of $t$.
Now, by the energy estimate, we get
\begin{align*}
(\mu_2-\mu_1)\lVert \upsi \rVert_{L^2}^2 &\le (R,\psi_1)_{L^2}\lVert \psi_1 \rVert_{L^2}\lVert \upsi\rVert_{L^2}+ \lVert R\rVert_{L^1}\lVert \upsi\rVert_{L^{\infty}}\\
&\le \frac{\mu_2-\mu_1}{4}\lVert \upsi\rVert_{L^2}^2+\frac{(R,\psi_1)_{L^2}^2}{(\mu_2-\mu_1)}+\lVert R\rVert_{L^1}\lVert \upsi\rVert_{L^{\infty}}.
\end{align*}
Moreover, by using Lemma \ref{apenlem}, we have 
\begin{equation*}
\lVert \upsi\rVert_{L^{\infty}}\le C(\lVert \upsi\rVert_{L^2}+ \lVert (R,\psi_1)_{L^2}\psi_1\rVert_{L^{\infty}}+\lVert r^2(-\log r)^{5/2} R\Vert_{L^{\infty}}).
\end{equation*}
Therefore, by using \eqref{R-2}, \eqref{R-3}, \eqref{R-1},
it follows from the Cauchy–Schwarz inequality and an absorbing argument that
\begin{equation}
\label{R-4}
\lVert \upsi\rVert_{L^{\infty}}\le C \zeta_2 t^{-5/2}
\end{equation}
for any $t$ sufficiently large with some $C>0$ independent of $t$.
Moreover, since 
\begin{equation*}
\lVert R\rVert_{L^{\infty}(B_{1}^2\setminus B_{1/2}^2)}=O(e^{-\mu_1 t}\zeta_2^2)\quad \text{as $t\to\infty$}, 
\end{equation*}
it follows from the standard elliptic regularity theory, \eqref{R-4} and the Hopf lemma that 
\begin{equation*}
\lVert \upsi\rVert_{C^{1}(B_{1}\setminus B_{1/2})}\le C t^{-5/2} \zeta_2 \quad\text{and} \quad |\upsi|\le C t^{-5/2}\zeta_2 \psi_1 \quad \text{in $B_{1}^2$}
\end{equation*}
for any $t$ sufficiently large with some $C>0$ independent of $t$.

Thanks to \eqref{G-1} and the fact that
\begin{align*}
\cE \upsi_{t}= (G,\psi_1)_{L^2(B_1^2)}\psi_1-G \quad \text{in $B_{1}^2$,} \quad \upsi_{t}=0 \quad \text{on $\partial B_{1}^{2}$},
\end{align*}
by using a similar argument, we obtain 
\begin{equation*}
\lVert \upsi_{t}\rVert_{C^{1}(B_{1}\setminus B_{1/2})}\le C t^{-5/2}\zeta_2 \quad\text{and} \quad |\upsi_{t}|\le C t^{-5/2}\zeta_2 \psi_1 \quad \text{in $B_{1}^2$}
\end{equation*}
for a.e. $t$ sufficiently large with some $C>0$ independent of $t$. Hence, the conclusion follows.
\end{proof}
We now construct a sub-solution $\Psi_2$ as follows.
\begin{prop}
\label{subprop}
There exists $\Tilde{T}>0$ independent of $\ep$
such that for any $\ep>0$, we have
\begin{equation*}
\text{$\Psi_2:=\uPsi+\upsi\in W^{1,\infty}_{\mathrm{loc}}((0,\infty);H^{1}(B_{1}^2\setminus B_{\ep}^2 ))$}
\end{equation*}
and 
\begin{equation*}
(\partial_t+\cE)\Psi_2\le -\lambda^{*}e^{u^{*}}r^4 e^{\mu_1 t}\cF(r^{-4}e^{-\mu_1 t}\Psi_2) \quad \text{in $B_{1}^2\setminus B_{\ep}^2 \times (\Tilde{T},\infty)$.}
\end{equation*}
In addition, we have
$\Psi_{2}=0$ on $\partial B_{1}^2 \times (\Tilde{T},\infty)$, and $\Psi_2<0$ on $B_{\rho(t)/2}^2 \times [\Tilde{T},\infty)$.
\end{prop}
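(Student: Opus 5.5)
The plan is to compute $(\partial_t+\cE)\Psi_2$ term by term and use the defining equations of $\xi_2$ and $\upsi$ to cancel the leading contributions. What remains is a strictly negative term of size $\zeta_2''$ plus the term $-R$. The nonlinearity is then controlled by the monotonicity of $\cF$, using $\upsi\le -\frac{\nu_0}{4\psi_1(0)}\zeta_2'\psi_1$.

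The regularity $\Psi_2\in W^{1,\infty}_{\mathrm{loc}}((0,\infty);H^1(B_1^2\setminus B_\ep^2))$ follows directly from Lemmata \ref{nu-2-lem} and \ref{upsilem}, since $\log r$ is smooth away from the origin. The boundary condition $\Psi_2=0$ on $\partial B_1^2$ holds because $\log 1=0$ and $\xi_2,\psi_1,\upsi$ all vanish there.

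\textbf{Step 1: the linear part for $\uPsi$.} On $B_1^2\setminus B_\ep^2$ we have $\Delta\log r=0$, so $\cE(\log r)=-(K+\mu_1)\log r$. Using $\cE\psi_1=0$ and the equation for $\xi_2$,
\begin{align*}
(\partial_t+\cE)\uPsi&=\zeta_2'\psi_1+\zeta_2''(-\nu_2\log r+\xi_2-l_2\psi_1)+\zeta_2'(-\nu_2'\log r+\xi_{2,t})\\
&\quad+\zeta_2'\Bigl(\nu_2(K+\mu_1)\log r-\psi_1-\nu_2(K+\mu_1)\log r+2\nu_2\frac{\zeta_2''}{\zeta_2'}\log r\Bigr)\\
&=\zeta_2''(\nu_2\log r+\xi_2-l_2\psi_1)+\zeta_2'(-\nu_2'\log r+\xi_{2,t}).
\end{align*}
This is why $\xi_2$ carries the term $2\nu_2\frac{\zeta_2''}{\zeta_2'}\log r$: it flips the sign of the $\zeta_2''\log r$ term so that it becomes favorable. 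We have $\zeta_2''>0$ and $\log r<0$, and by \eqref{px2} $\xi_2-l_2\psi_1\le -\frac{l_2}{2}\psi_1$. By Lemma \ref{nu-2-lem}, $|\zeta_2'\nu_2'\log r|=O(t^{-1}\zeta_2''|\log r|)$ and $|\zeta_2'\xi_{2,t}|=O(t^{-1}\zeta_2''\psi_1)$. For large $t$ these error terms are absorbed into the good terms, which gives
\[
(\partial_t+\cE)\uPsi\le \zeta_2''\Bigl(\frac{\nu_0}{2}\log r-\frac{l_2}{4}\psi_1\Bigr)\le -\frac{l_2}{4}\zeta_2''\psi_1 .
\]

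\textbf{Step 2: the correction $\upsi$.} By its definition, $(\partial_t+\cE)\upsi=\upsi_t+(R,\psi_1)_{L^2}\psi_1-R$. Lemma \ref{upsilem} and \eqref{R-1} give $|\upsi_t|+(R,\psi_1)\psi_1\le C\zeta_2t^{-5/2}\psi_1$. Lemma \ref{zetalem} gives $\zeta_2''\gtrsim\zeta_2t^{-2}$ if $\mu_1>0$ and $\zeta_2''\gtrsim\zeta_2t^{-1}$ if $\mu_1=0$. In both cases $C\zeta_2t^{-5/2}\psi_1\le\frac{l_2}{8}\zeta_2''\psi_1$ for large $t$. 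This is the step I expect to be the tightest: it needs exactly the $t^{-5/2}$ gain from \eqref{R-2}, which is why $\zeta_2$ carries the extra factor $t^{-5}$. Hence
\[
(\partial_t+\cE)\Psi_2\le -\frac{l_2}{8}\zeta_2''\psi_1-R\le -R .
\]

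\textbf{Step 3: comparison of nonlinearities.} By Lemma \ref{upsilem}, $|\upsi|\le C\zeta_2t^{-5/2}\psi_1$. Lemma \ref{zetalem} gives $|\zeta_2'|\gtrsim\zeta_2t^{-1}$ if $\mu_1>0$ and $|\zeta_2'|\gtrsim\zeta_2t^{-1/2}$ if $\mu_1=0$. Therefore, for large $t$,
\[
\upsi\le -\frac{\nu_0}{4\psi_1(0)}\zeta_2'\psi_1,
\qquad\text{that is,}\qquad
\Psi_2\le \uPsi-\frac{\nu_0}{4\psi_1(0)}\zeta_2'\psi_1 .
\]
Since $\cF$ is nondecreasing by \eqref{pF} and $\lambda^*e^{u^*}r^4e^{\mu_1t}>0$, this yields $\lambda^*e^{u^*}r^4e^{\mu_1t}\cF(r^{-4}e^{-\mu_1t}\Psi_2)\le R$. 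Combined with Step 2, this is the claimed differential inequality on $(B_1^2\setminus B_\ep^2)\times(\Tilde T,\infty)$.

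\textbf{Step 4: sign near the origin.} Lemma \ref{rholem} gives $\uPsi<\frac{\nu_0}{2}\zeta_2'$ on $B_{\rho/2}^2$. Since $|\upsi|\le C\zeta_2t^{-5/2}\psi_1\ll|\zeta_2'|$, it follows that $\Psi_2<0$ there. None of the thresholds on $t$ depends on $\ep$, because every estimate above is pointwise in $r$; so $\Tilde T$ is independent of $\ep$.
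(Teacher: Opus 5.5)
Your proposal is correct and follows the paper's proof essentially step for step. It uses the same computation $(\partial_t+\cE)\uPsi=\zeta_2''(\nu_2\log r+\xi_2-l_2\psi_1)+\zeta_2'(-\nu_2'\log r+\xi_{2,t})\lesssim-\zeta_2''\psi_1$ via \eqref{px2} and Lemma \ref{nu-2-lem}, the same absorption of $\upsi_t+(R,\psi_1)_{L^2}\psi_1=O(\zeta_2t^{-5/2}\psi_1)$ into that gain using the orders from Lemma \ref{zetalem}, and the same bound $\Psi_2\le\uPsi-\frac{\nu_0}{4\psi_1(0)}\zeta_2'\psi_1$ combined with monotonicity of $\cF$ to dominate the nonlinearity by $R$.
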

\begin{proof}
Let $0<\ep<1$. Then, by directly computing and
using Lemmata \ref{nu-2-lem}, \ref{zetalem} and \eqref{px2}, we have
\begin{equation*}
(\partial_t+\cE)\uPsi=\zeta_2''(t)(\xi_2-l_2\psi_1+\nu_2 \log r)-\zeta_{2}'(t)\nu_2'(t)\log r+\zeta'_{2}\xi_{2,t}\le -\frac{l_2}{2} \zeta''_{2}(t)\psi_1
\end{equation*}
in $B_{1}\setminus B_{\ep}$ for any $t>T_1$ with some $T_1>0$ independent of $\ep$. Moreover, it follows from Lemmata \ref{upsilem}, \ref{zetalem} and \eqref{pF} that
\begin{equation*}
    \Psi_2=\uPsi+\upsi=\uPsi+O(\zeta_2 t^{-5/2}\psi_1)\le \uPsi-\frac{\nu_0}{4\psi_1(0)}\zeta_2'\psi_1 \quad \text{in $B_1$}
\end{equation*}
and thus
\begin{equation*}
\lambda^{*}e^{u^{*}+\mu_1 t} \cF(r^{-4}e^{-\mu_1 t}(\uPsi+\upsi))\le R.
\end{equation*}
As a result, it follows from Lemmata \ref{rholem}, \ref{zetalem} and \ref{upsilem} that $\Psi_2(r)<0$ in $B_{\rho(t)/2}$ for ant $t>T_2$ and
\begin{align*}
(\partial_t+\cE)\upsi&=\upsi_t+(R,\psi_{1})_{L^2}\psi_1-R\le O(t^{-5/2}\zeta_2 \psi_1)-R=O(t^{-1/2}\zeta_2''\psi_1)-R\\
&\le \frac{l_2}{4}\zeta_2''(t) \psi_1 -\lambda^{*}e^{u^* + \mu_1 t}r^4  \cF(r^{-4}e^{-\mu_1 t}(\uPsi+\upsi)) \quad \text{in $B_{1}^2$}
\end{align*}
for a.e. $t>T_2$ with some $T_2>0$ independent of $\ep$. Hence, we obtain the assertion by taking $\Tilde{T}\ge \max\{T_1, T_2\}$.
\end{proof}
As a result of Proposition \ref{subprop}, we obtain the following lower estimate of $\Phi$.
\begin{thm}
\label{subcor}
There exists $\underline{T}>0$ such that
\begin{equation}
\label{sub-cor-eq1}
\Psi_2 (r,t)\le \Psi(r, t, 1-\underline{T}) \quad \text{in $B_{1}^2$}
\end{equation}
for any $t>\underline{T}$. As a result, there exist $C>0$, $M>0$ and  $T^{*}>0$ independent of $t$ such that
\begin{equation}
\label{sub-cor-eq2}
\Phi(r,t)\ge \frac{1}{C}e^{-\mu_1 t}(\zeta_2(t)\phi_1-\nu\zeta_2'(t)\log r)+Me^{-\mu_1 t}\zeta_2'(t)\phi_1 \quad \text{in $B_1$}
\end{equation}
for any $t>T^{*}$.
\end{thm}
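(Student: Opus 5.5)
The argument mirrors the proof of Theorem \ref{supercor}, with the roles of super- and sub-solution exchanged. The one new feature is that $\Psi_2$ is only a sub-solution away from the origin, on $B_1^2\setminus B_\ep^2$. This costs nothing, because $\Psi_2<0\le \Psi$ on $B_{\rho(t)/2}^2$ by Proposition \ref{subprop} and \eqref{sita}.

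\emph{Step 1: the comparison \eqref{sub-cor-eq1}.} Take $\underline T\ge \Tilde T$ large and set $\tau=1-\underline T$, so that $\Psi(r,\underline T,\tau)=e^{\mu_1\underline T}r^4\Phi(r,1)$. By \eqref{sita}, $\Phi(\cdot,1)>c\psi_1$ in $B_1$, and hence $\Psi(r,\underline T,\tau)\ge c\,e^{\mu_1\underline T}r^4\psi_1$. On the other hand, \eqref{atarimae} and Lemma \ref{upsilem} give $\Psi_2(\cdot,\underline T)\le 2\zeta_2(\underline T)\psi_1$, and $\Psi_2<0$ on $B_{\rho/2}^2$.

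Since $\zeta_2\to0$ while $e^{\mu_1\underline T}$ does not decay, I expect that for $\underline T$ large we get $\Psi_2(\cdot,\underline T)\le\Psi(\cdot,\underline T,\tau)$ on $B_1^2\setminus B_{\rho(\underline T)/2}^2$. Here $r^4$ is bounded below by $(\rho/2)^4$, which is comparable to $e^{-\mu_1 t}\zeta_2 t^{-5}$ by \eqref{prho}. This is where the main obstacle lies: one has to check the near-origin region carefully using \eqref{prho}. If the lower bound fails there, I would instead run the comparison starting from time $\underline T$ with the shift applied to $\Psi_2$ only, exploiting the vanishing of $\Psi_2$ near $r=\rho$.

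Next, fix $t_1$ and choose $\ep<\rho(t)/2$ for all $t\in[\underline T,t_1]$; this is possible because $\rho$ is continuous and positive. On $\partial B_\ep^2$ we have $\Psi_2<0\le\Psi$, and on $\partial B_1^2$ both functions vanish. Apply the parabolic comparison principle on the annulus $B_1^2\setminus B_\ep^2$. It is justified because $\cF$ is nondecreasing, so the nonlinear term is monotone in $\Psi$, together with the sub-solution inequality of Proposition \ref{subprop} and the $W^{1,\infty}_{\mathrm{loc}}(H^1)$ regularity. Letting $t_1\to\infty$ yields \eqref{sub-cor-eq1} on $B_1^2\setminus B_\ep^2$. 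Inside $B_{\rho/2}^2$ the inequality is trivial.

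\emph{Step 2: the lower bound \eqref{sub-cor-eq2}.} Unwinding the transformation gives $\Phi(r,t)\ge e^{-\mu_1(t-\tau)}r^{-4}\Psi_2(r,t-\tau)$. Expand $\Psi_2=\zeta_2\psi_1-\nu_2\zeta_2'\log r+\zeta_2'(\xi_2-l_2\psi_1)+\upsi$. By Lemmas \ref{nu-2-lem} and \ref{upsilem}, \eqref{px2}, and the relation $t^{-5/2}\zeta_2=O(|\zeta_2'|)$ from Lemma \ref{zetalem}, this is bounded below by $\zeta_2\psi_1-\nu_0(1-o(1))\zeta_2'\log r+M'\zeta_2'\psi_1$.

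Now convert to the original variables using $\phi_1=(\tfrac{m\omega_m}{N\omega_N})^{1/2}r^{-4}\psi_1$ from Proposition \ref{evprop} with $m=18$ and $\gamma=4$. This turns $\nu_0 r^{-4}$ into $\nu$ times the normalising constant in \eqref{defb}. The result is the claimed form at time $t-\tau$, with some $C$ and $M$.

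\emph{Step 3: removing the time shift.} Finally, replace $t-\tau$ by $t$, using that $\tau$ is fixed. By Lemma \ref{zetalem} we have $\zeta_2(t-\tau)\simeq\zeta_2(t)$ and $\zeta_2'(t-\tau)\simeq\zeta_2'(t)$; this holds in both cases $\mu_1>0$ and $\mu_1=0$, since $e^{-(2(t-\tau)/b)^{1/2}}/e^{-(2t/b)^{1/2}}\to1$. Together with $e^{\mu_1\tau}=O(1)$ and the signs $\zeta_2'<0$ and $-\log r\ge0$, this lets us absorb the shift into the constants $C$ and $M$ and gives \eqref{sub-cor-eq2} for $t>T^{*}$.
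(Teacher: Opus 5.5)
Your overall route is the paper's. You order the data at $t=\underline T$ via \eqref{sita}, and you compare on annuli $B_1^2\setminus B_\ep^2$ with $\ep<\rho(t)/2$ over bounded time intervals, using $\Psi_2<0\le\Psi$ on the inner boundary and the monotonicity of $\cF$. You then let the interval grow and unwind the shift. Two steps, however, are not closed as written.

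\textbf{Step 1 uses the wrong power of $t$ from \eqref{prho}.} That estimate says $-\log\rho\le\frac{\mu_1t}{4}-\frac14\log\zeta_2-\frac54\log t+C$, i.e.\ $\rho^4\ge c\,e^{-\mu_1t}\zeta_2(t)\,t^{5}$, not $t^{-5}$. With $t^{-5}$ the required inequality $3\zeta_2\psi_1\le c\,e^{\mu_1\underline T}(\rho/2)^4\psi_1$ would indeed fail, and your fallback is not an argument. With the correct power there is no obstacle:
\begin{itemize}
\item for $r\ge\rho(\underline T)/2$ you get $\Psi(r,\underline T,1-\underline T)\ge c\,e^{\mu_1\underline T}r^4\psi_1\ge c'\,\underline T^{5}\zeta_2(\underline T)\psi_1$;
\item by \eqref{atarimae} and Lemma \ref{upsilem}, $\Psi_2(r,\underline T)\le 2\zeta_2\psi_1+C\underline T^{-5/2}\zeta_2\psi_1$.
\end{itemize}
So the ordering holds once $\underline T$ is large, and on $B^2_{\rho/2}$ it is trivial. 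This is exactly the last estimate of Lemma \ref{rholem}, $e^{-\mu_1t}r^{-4}\uPsi\lesssim t^{-5}\psi_1$, which the paper sets against $\Phi(\cdot,1)\ge c\psi_1$. What makes it work is the margin $t^{5}$, not the non-decay of $e^{\mu_1\underline T}$: near $r=\rho$ the factor $r^4$ cancels $e^{\mu_1\underline T}$.

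\textbf{Step 3 does not follow from comparability up to fixed constants.} The logarithmic term $-\nu\zeta_2'r^{-4}\log r$ is nonpositive, since $\zeta_2'<0$ and $\log r<0$. It also carries the same factor $\frac1C$ as the positive term $\zeta_2\phi_1$.

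Knowing only $\zeta_2(s)\ge c\,\zeta_2(t)$ and $|\zeta_2'(s)|\le C'|\zeta_2'(t)|$ yields $c\,\zeta_2\psi_1-C'\nu_0|\zeta_2'||\log r|$. At points where $\nu_0|\zeta_2'||\log r|$ is slightly below $\zeta_2\psi_1$, this is negative while the right-hand side of \eqref{sub-cor-eq2} is positive. The mismatch there is of order $\zeta_2\psi_1\gg|\zeta_2'|\psi_1$, so it cannot be absorbed into $M$.

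What you need are difference estimates, which the paper's mean-value step provides. Set $s=t+\underline T-1>t$ and use $\zeta_2''>0$ (Lemma \ref{zetalem}):
\begin{itemize}
\item $\zeta_2(s)\ge\zeta_2(t)-(s-t)|\zeta_2'(t)|$;
\item $|\zeta_2'(s)|\le|\zeta_2'(t)|$;
\item $\nu_2\le\nu_0$, which follows from \eqref{defnu2} because $\zeta_2''/\zeta_2'<0$.
\end{itemize}
Then factor out the single constant $e^{-\mu_1(s-t)}\le1$. Every remaining error is $O(|\zeta_2'(t)|)\psi_1$ and is absorbed by $M$. The paper treats $\nu_2$ versus $\nu_0$ separately, via the radius $r_1(t)$ and $\Phi\ge0$; the sign of $\nu_2-\nu_0$ lets you skip that detour.

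\textbf{Minor point on the conversion to $\phi_1$.} The weight dimension must be $m=N-2\gamma=2$, the space on which $\psi_1$ lives; the $N+2\gamma$ in Proposition \ref{evprop} is a misprint. With $m=2$ one has $r^{-4}\psi_1=\sqrt{5\omega_{10}/\omega_2}\,\phi_1$, which matches $\nu=\nu_0\sqrt{\omega_2/(5\omega_{10})}$. With $m=18$ it would not.
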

\begin{proof}
We remark that
\begin{equation*}
\Psi(r, t, 1-t)= r^{4}e^{\mu_1 t}\Phi(r,1) \quad \text{in $[0,1]$ \quad for any $t>0$}.
\end{equation*}
By \eqref{sita}, we have $\Phi(r,1)\ge c\psi_1$ in $[0,1]$
for some $c>0$. In addition, it follows from Lemmata \ref{rholem}, \ref{upsilem} and Proposition \ref{subprop} 
that there exists $\underline{T}>\Tilde{T}$ such that
\begin{equation*}
r^{-4}e^{-\mu_1\underline{T}}\Psi_2(r,\underline{T}) \le C \underline{T}^{-5}\psi_1 +(\frac{\rho(\underline{T})}{2})^{-4}e^{-\mu_1\underline{T}}\upsi\le C\underline{T}^{-5}\psi_1\le c \psi_1 \quad \text{in $B_{1}^2$},
\end{equation*}
where $C>0$ is independent of $\underline{T}$.
Hence, we have 
\begin{equation}
\label{sitakara-hyouka}
\text{$\Psi_2(r,\underline{T})\le \Psi(r, \underline{T}, 1-\underline{T})$ \quad in $B_{1}^2$}.
\end{equation}
We fix $T>\underline{T}$. Then, there exists $\rho_0>0$ such that $\rho_0\le \frac{\rho(t)}{2}$ for $\underline{T}\le t\le T$. By Proposition \ref{subprop}, we have $\Psi_2\le 0$ on $B_{\rho_0}\times [\underline{T},T]$. Thanks to the fact, \eqref{sitakara-hyouka}, Proposition \ref{subprop}, we obtain \eqref{sub-cor-eq1} by using a comparison principle and then letting $T\to\infty$ and $\rho_0\to 0$.

Next, we prove \eqref{sub-cor-eq2}. By Lemma \ref{sub-cor-eq1} and the definition of $\Psi(r,t,\tau)$, we have 
\begin{align}
\label{kumiawase-1}
\Phi(r,t)=e^{-\mu_1(t+\tau)}r^{-4}\Psi(r,t+\tau,-\tau) \ge r^{-4} e^{-\mu_1(t+\tau)} \Psi_2(r,t+\tau)
\end{align}
for any $t\ge 1$, where $\tau=\underline{T}-1$. In addition,
it follows from Lemmata \ref{zetalem}, \ref{nu-2-lem} and \ref{upsilem} that
\begin{align*}
\Psi_2(r,t)&= \zeta_2(t)\psi_1-\nu_2(t) \zeta_2'(t)\log r+
O(-\zeta_2'(t)\psi_1)+
O(\zeta_2(t) t^{-5/2}\psi_1)\\&=\zeta_2(t)\psi_1-\nu_2(t)\zeta_2'(t)\log r+O(-\zeta_2'(t)\psi_1) \quad \text{in $B_{1}^2$ \quad as $t\to\infty$.}
\end{align*}
By the above estimate, Lemmata \ref{nu-2-lem}, \ref{zetalem} and
the mean-value theorem, we have
\begin{align*}
\zeta_2(t)\psi_1-\zeta_2'(t)\nu_2(t)\log r&=\zeta_2(t+\tau)\psi_1-\zeta_2'(t+\tau)\nu_2(t+\tau)\log r\\
&\quad-\tau(\zeta_2'(t_0)\psi_1(r)-(\zeta_2''(t_0)\nu_2(t_0)+\zeta_2'(t_0)\nu_2'(t_0))\log r)\\
&\le \zeta_2(t+\tau)\psi_1-\zeta_2'(t+\tau)\nu_2(t+\tau)\log r\\
&\quad -\tau \zeta_2'(t_0)\psi_1+\tau\zeta''_2(t_0)\nu_0(1-O(t^{-1/2}))\log r\\
&\le \Psi_2(t+\tau) -C(\zeta_2'(t_0)+\zeta_2'(t+\tau))\psi_1 
\end{align*}
in $B_{1}^2$ with some $t<t_0<t+\tau$, where $C>0$ is independent of $t$. 
Therefore, it follows from Lemma \ref{zetalem} 
that
\begin{equation}
\label{kumiawase-2}
\Psi_2(r,t+\tau)\ge \zeta_2(t)\psi_1-\zeta_2'(t)\nu_2(t)\log r+C_1 \zeta_2'(t)\psi_1 \quad \text{in $B_{1}^2$}
\end{equation}
for any $t$ sufficiently large with some $C_1>0$ independent of $t$.

We define $0<r_1(t)<1$ such that 
\begin{equation*}
-\log r_1 =-\frac{2\zeta_2}{\nu_2(t) \zeta_2'}\psi_1(0).
\end{equation*}
Then, thanks to Lemma \ref{nu-2-lem}, it follows that
\begin{equation*}
\zeta_2(t)\psi_1-\zeta_2'(t)\nu_0 \log r+C_1 \zeta_2'(t)\psi_1<0  \quad \text{in $B_{r_1(t)}$}
\end{equation*}
and
\begin{equation*}
 \zeta_2(t)\psi_1-\zeta_2'(t)\nu_2(t)\log r+C_1 \zeta_2'(t)\psi_1<0 \quad \text{in $B_{r_1(t)}$}   
\end{equation*}
for any $t$ sufficiently large.
On the other hand, by using Lemma \ref{nu-2-lem} again, we have
\begin{align*}
    \zeta_2(t)\psi_1-\zeta_2'(t)\nu_0\log r&\le \zeta_2\psi_1-\zeta_2'\nu_2(t)\log r+|(\nu_2(t)-\nu_0)|\zeta_2'\log r\\
    &\le  \zeta_2\psi_1-\zeta_2'\nu_2(t)\log r - \zeta_2''\log r_1 \psi_1\\
 & \le \zeta_2\psi_1-\zeta_2'\nu_2(t)\log r-C_2 \zeta_2'\psi_1
\end{align*}
in $B_{1}^2\setminus B_{r_1}^2$ for any sufficiently large $t$ with some $C_2>0$ independent of $t$. Therefore, 
\begin{equation*}
 \Bigl(\zeta_2(t)\psi_1-\zeta_2'(t)\nu_2(t)\log r+C_1 \zeta_2'(t)\psi_1 \Bigr)_{+}\ge  \Bigl(\zeta_2(t)\psi_1-\zeta_2'(t)\nu_0\log r+(C_1+C_2)\zeta_2'(t)\psi_1\Bigr)_{+}
\end{equation*}
in $B_{1}^2\setminus B_{r_1}^2$ for any sufficiently large $t$.
By combining the above, \eqref{kumiawase-1} and \eqref{kumiawase-2}, and \eqref{sita}, we verify that
\begin{equation*}
\Phi(r,t)\ge e^{-\mu_1(t+\tau)} r^{-4}\Bigl(\zeta_2(t)\psi_1-\zeta_2'(t)\nu_0\log r+C\zeta_2'(t)\psi_1\Bigr)_{+}
\end{equation*}
for any $t$ sufficiently large.
Thus, by using Proposition \ref{evprop} and \eqref{defb}, it yields
\begin{equation*}
\Phi(r,t)\ge \frac{1}{C}e^{-\mu_1t}(\zeta_2(t)\phi_1-\nu\zeta_2'(t)\log r)+Me^{-\mu_1 t}\zeta_2'(t)\phi_1
\end{equation*}
for any $t>T^{*}$ with some $T^{*}>0$, $C>0$ and $M>0$ independent of $t$. Hence, we obtain \eqref{sub-cor-eq2}.
\end{proof}
As a consequence of Theorems \ref{supercor}, \ref{subcor} and Lemma \ref{zetalem}, we establish Theorem \ref{intro-thm-3}.
\subsection{Proof of Theorem \ref{intro-thm-2}}
In this subsection, we prove Theorem \ref{intro-thm-2} by a matching argument. We define $\lVert u\rVert_{L^{\infty}}=\alpha(t)$. We quote the following proposition obtained in \cite[Proposition 4.2]{KO26}.
\begin{prop}
\label{prop-inner}
There exists $\hat{r}>0$ depending only on $f$ such that
For any $\ep\in (0,\hat{r})$, there exist $T>0$ and $C>0$ such that 
\begin{align*}
u^{*}-u\le
Ce^{-2\alpha(t)} r^{-4}\log (re^{\alpha(t)/2})
\quad \text{in $B_{\hat{r}}\setminus B_{\ep}$}
\end{align*}
for any $r\in\left[\ep,\hat{r}\right)$ and
$t>\underline{T}$.
\end{prop}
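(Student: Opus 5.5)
The plan is to compare $u$ with the \emph{regular} radial stationary solutions of the Gelfand-type problem, after rescaling around the origin. For $a>0$ large, let $U_a$ be the radial solution of $-\Delta U=\lambda^{*}e^{U}-f(r)$ with $U_a(0)=a$, $U_a'(0)=0$, on its maximal interval. Two features of the critical dimension $N=10$ drive the argument. First, after scaling, $U_a(r)=a+W(re^{a/2})+o(1)$ locally uniformly in the variable $y=re^{a/2}$, where $W$ is the entire radial solution of $\Delta W+\lambda^{*}e^{W}=0$ in $\mathbb R^{10}$ with $W(0)=0$. Second, since $\gamma=4$ is a double root of the indicial equation ($N=10$), $W$ has the expansion
\begin{equation*}
W(y)=-2\log|y|-\log\frac{\lambda^{*}}{16}-c_0|y|^{-4}\log|y|+O(|y|^{-4}),\qquad |y|\to\infty,
\end{equation*}
with $c_0>0$.

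\textbf{Step 1: separation estimate for the stationary family.} I would first prove the stationary estimate
\begin{equation*}
0\le U_{*}-U_a\le Ce^{-2a}r^{-4}\log(re^{a/2})\quad\text{in } B_{\hat r}\setminus B_{Re^{-a/2}}.
\end{equation*}
I would do this by writing $U_a=U_*-V$ and linearizing. The linearization is $\Delta V+\lambda^{*}e^{U_*}V=O(e^{U_*}V^2)$ with $\lambda^{*}e^{U_*}=8r^{-2}+K(r)$ and $K\in C^0$ by Proposition \ref{prop-2-1}. The radial solutions of the leading operator behave like $r^{-4}$ and $r^{-4}\log r$, and the perturbation $K$ is handled by a Gronwall/ODE argument on $[Re^{-a/2},\hat r]$. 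Matching at $r=Re^{-a/2}$ with the inner expansion of $W$ fixes the coefficient as $e^{-2a}$ times the $\log$-type solution. The bound $V\ge 0$ follows from the separation property in the stable case $f\le f_H$ (Proposition \ref{prop-2-1}(ii)), and $\hat r$ is chosen small enough that $K$ is a genuine perturbation and the nonlinear remainder is absorbed.

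\textbf{Step 2: inner convergence and comparison.} Next I would show that $u(\cdot,t)\ge U_{\alpha(t)-C_0}$ on $B_{\hat r}\setminus B_\ep$ for large $t$. Rescale $w(y,s)=u(e^{-\alpha(t)/2}y,t)-\alpha(t)$. Since $\alpha$ grows at most linearly (Theorem \ref{supercor} together with $\Phi\ge0$ already give crude control), the time derivative in the inner variables carries a factor $e^{-\alpha}$ and is negligible. Parabolic estimates and a Liouville-type classification, using $w\le 0$, $w(0)=0$ and $w\le U_*-\alpha$, then give $w\to W$ locally uniformly. On the matching sphere $|y|=R$ this yields $u\ge U_{\alpha(t)-C_0}$. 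In the annulus $Re^{-\alpha/2}<r<\hat r$, I would use $\underline U(r,t)=U_{a(t)}(r)-\kappa e^{-2a(t)}$ with $a(t)=\alpha(t)-C_0$ as a subsolution. Its time derivative is of size $|a'|e^{-2a}r^{-4}\log(re^{a/2})$, and this error is absorbed by the correction term because $|a'|$ is controlled by $\alpha'$, which is small in an averaged sense. If $\alpha$ is not monotone, I would replace $a(t)$ by a smoothed monotone minorant of $\alpha$. The parabolic comparison principle on the annulus, with the boundary data supplied by the inner convergence at $r=Re^{-\alpha/2}$ and by \eqref{ue} at $r=\hat r$, then gives $u\ge\underline U$. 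Combining this with Step 1 gives
\begin{equation*}
u^{*}-u\le Ce^{-2\alpha(t)}r^{-4}\log(re^{\alpha(t)/2})\quad\text{on } [\ep,\hat r),
\end{equation*}
where the constant $e^{2C_0}$ is absorbed into $C$.

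\textbf{Main obstacle.} The hard step is the uniform inner convergence $w\to W$ together with the control of the time-derivative error in the comparison. It requires an a priori bound such as $\alpha'(t)=O(1)$ in an integrated sense, which I would derive from the outer upper bound of Theorem \ref{supercor}, and a sharp description of the $r^{-4}\log r$ tail of $W$ in the critical dimension. I would attempt the Liouville step first via energy/intersection-number arguments for radial solutions, exploiting that in $N=10$ every radial entire solution with $W(0)=0$ is the one above.
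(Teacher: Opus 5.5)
The paper does not prove this statement; it quotes it from \cite[Proposition 4.2]{KO26}. So your argument has to stand on its own. Step 1 is a reasonable plan, apart from one slip: for $N=10$ one has $\lambda^{*}e^{U_*}=16r^{-2}+K$, not $8r^{-2}+K$. Only $16=(N-2)^2/4$ gives the double indicial root $\gamma=4$ that you rely on. The proof breaks down in Step 2.

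\textbf{The barrier has the wrong sign.} $\underline{U}=U_{a(t)}-\kappa e^{-2a(t)}$ is not a subsolution. Since $U_a$ is stationary,
\[
\underline{U}_t-\Delta\underline{U}-\lambda^{*}e^{\underline{U}}+f
=a'(t)\bigl(\partial_aU_a+2\kappa e^{-2a}\bigr)+\lambda^{*}e^{U_a}\bigl(1-e^{-\kappa e^{-2a}}\bigr).
\]
Every term on the right is nonnegative when $a'\ge 0$, as it is for any monotone minorant of a growing $\alpha$. In the annulus $\partial_aU_a\approx 2(U_*-U_a)>0$, and lowering $U_a$ by a constant lowers the reaction term. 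So you have built a supersolution. The residual $a'\partial_aU_a\approx 2a'(U_*-U_a)$ is also not a small error. It is the very profile you are estimating, multiplied by $a'$, and for $\mu_1>0$ that factor is about $\mu_1/2$ on average, not ``small in an averaged sense''. Because $-\Delta-16r^{-2}$ annihilates $r^{-4}$ and $r^{-4}\log r$, any correction absorbing this residual must be of the same order as $U_*-U_a$. That is a genuinely different construction. It would also need a Lipschitz $a(t)$ with $|a(t)-\alpha(t)|\le C$, which you do not have a priori. A monotone minorant of $\alpha$ can lag behind it by an unbounded amount, and then the bound you obtain is in terms of the wrong quantity.

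\textbf{The inner convergence is only asserted.} The claim that $w\to W$ for all large $t$ rests on the time derivative ``carrying a factor $e^{-\alpha}$''. That presupposes $u_t=o(e^{\alpha(t)})$ near the origin. This is not known, since $u_0$ is arbitrary and $u$ need not be monotone in $t$. Without it, the rescaled problem in the inner time $s$, with $ds=e^{\alpha}\,dt$, is a genuine parabolic equation. You would then need a Liouville theorem for ancient radial solutions below the singular steady state in the critical dimension. The ODE uniqueness of the radial steady state with $W(0)=0$, which is what you invoke, does not give this. You would also need to sample times at which $\alpha$ attains its running maximum, since otherwise $w\le 0$ fails for $s<0$. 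Even then you get convergence only along such times.

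\textbf{The a priori control of $\alpha$ is circular.} Theorem \ref{supercor} cannot supply it. An upper bound on $\Phi$ is a lower bound on $u$, hence a lower bound on $\alpha$. In the paper, the upper bound on $\alpha$ is derived from this very proposition.

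\textbf{What to do instead.} What Step 1 really needs is a fixed-time statement such as $u(\cdot,t)\ge U_{\alpha(t)-C}$ on $[0,\hat r]$. The standard tool for this is intersection comparison (zero-number arguments) with the ordered family $\{U_a\}$. That route involves neither a time-derivative error nor a parabolic Liouville theorem.
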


\begin{proof}[Proof of Theorem \ref{intro-thm-2}.]
Since $\alpha(t):=\lVert u\rVert_{L^{\infty}}$, it follows from  Theorem \ref{supercor} and Proposition \ref{prop-2-1} that
\begin{equation*}
-2\log (\delta(t-\tau))-\alpha(t)-C \le \Phi(\delta(t-\tau),t)\le C \quad\text{for any $t$ sufficiently large}
\end{equation*}
with some $C>0$ independent of $t$.
Moreover, by \eqref{defdelta} and Lemma \ref{zetalem}, we have
\begin{equation*}
-2\log (\delta(t-\tau))=\frac{\mu_1 (t-\tau)}{2}- \frac{1}{2}\log (-\zeta'_{1}(t-\tau))=\frac{\mu_1 t}{2}- \frac{1}{2}\log (-\zeta'_{1}(t))+O(1)
\end{equation*}
as $t\to\infty$.
As a result, it follows that
\begin{equation*}
\frac{\mu_1 t}{2}-\frac{1}{2}\log (-\zeta_{1}'(t))-C\le \alpha(t)
\end{equation*}
for any $t$ sufficiently large with some $C>0$ independent of $t$.
On the other hand, we claim that
\begin{align}
\label{alphaclaim}
\begin{split}
\alpha(t)&\le \frac{\mu_1 t}{2}-\frac{1}{2}\log (-\zeta_{1}'(t))+C \quad\text{if $\mu_1>0$,}\\
\alpha(t)&\le -\frac{1}{2}\log (-\zeta_{1}'(t))+\frac{11}{4}\log t+C \quad \text{if $\mu_1=0$}
\end{split}
\end{align}
for any $t$ sufficiently large with some $C>0$ independent of $t$.
Indeed, we fix $0<\ep<\hat{r}$. Then, 
it follows from Proposition \ref{prop-inner} and Theorem \ref{subcor} that
\begin{equation*}
\frac{1}{C}e^{-\mu_1 t}\ep^{-4}\zeta_{2}(t)\le \Phi(r,t)\le C \alpha e^{-2\alpha}\ep^{-4}
\end{equation*}
for any $t$ sufficiently large.
It implies that
\begin{equation}
\label{main-eq-1}
\mu_1 t-\log \zeta_2(t)\ge 2\alpha-\log \alpha - C
\end{equation}
for any $t$ sufficiently large.
We first consider the case $\mu_1>0$. In this case, it follows from \eqref{main-eq-1} that
\begin{equation*}
\alpha\le \mu_1 t \quad \text{for any $t$ sufficiently large.}
\end{equation*}
Thus, we obtain
\begin{equation*}
\alpha\le \frac{\mu_1 t}{2}+\frac{1}{2}(\log \alpha-\log \zeta_2)+C\le \frac{\mu_1 t}{2}-\frac{1}{2}\log (\zeta_2/t)+C
\end{equation*}
for any sufficiently large $t$.
Therefore, the result follows from the fact that
\begin{equation*}
\log (\zeta_2/t)=\log (-\zeta_1')+O(1) \h \text{as $t\to\infty$}\quad\text{(see Lemma \ref{zetalem})}.
\end{equation*}
We now consider the case $\mu_1=0$. In this case, it follows from \eqref{main-eq-1} and Lemma \ref{zetalem} that
\begin{equation*}
\log \alpha \le \log (-\log \zeta_2)+C\le \frac{1}{2}\log t+C
\end{equation*}
for any sufficiently large $t$.
Hence, it follows from \eqref{main-eq-1} that
\begin{equation*}
\alpha\le \frac{1}{2}(\log \alpha-\log \zeta_2)+C\le -\frac{1}{2}\log (\zeta_2 t^{-1/2})+C
\end{equation*}
for any sufficiently large $t$. As a result,  the assertion \eqref{alphaclaim}
follows from the fact that
\begin{equation*}
\log (\zeta_2 t^{-1/2})=\log (-\zeta_{1}')-\frac{11}{2}\log t+O(1) \h \text{as $t\to\infty$} \quad\text{(see Lemma \ref{zetalem})}.
\end{equation*}
Thus, the conclusion follows from Lemma \ref{zetalem}.
\end{proof}

\bigskip
 \noindent
 {\bf Acknowledgements}\\
 The author was supported by JSPS KAKENHI Grant Number 26KJ0083.

\appendix
\section{A basic regularity lemma}
In this appendix, we introduce the following.
\begin{lem}
\label{apenlem}
Let $g(r)\in L^1(B_1)$.
Assume that $g_1:=r^2(-\log (r/e))^{-5/2}g(r)\in L^{\infty}(B_{1}^2)$ and $V(r)\in L^{\infty}(B_{1}^2)$. Let $w\in C^{1}_{0}(\overline{B_1})$  
be a radial solution of 
\[
-\Delta w= g(r)+V(r)w\quad \text{in $B_{1}^{2}$}.
\]
Then, 
\[
\lVert w\rVert_{L^{\infty}(B_{1}^2)}\le C(\lVert V\rVert_{L^{\infty}(B_{1}^2)}\lVert w\rVert_{L^2(B_{1}^2)}+\lVert g_1\rVert_{L^{\infty}(B_{1}^2)}),
\]
where $C>0$ is universal.
\end{lem}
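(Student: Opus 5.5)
The plan is to prove the estimate first in the radial case and then pass to general $g$ via the radial ODE. Since $w$ is radial and satisfies $-\Delta w = g + Vw$ in $B_1^2$, we have $-(r w')' = r(g + Vw)$. Integrating from $0$ to $r$ (using that $w\in C^1$, so $r w'(r)\to 0$ as $r\to 0$) gives
\[
-r w'(r) = \int_0^r s\,(g(s)+V(s)w(s))\,ds .
\]
Integrating once more from $r$ to $1$, with $w(1)=0$, yields
\[
w(r) = \int_r^1 \frac{1}{\sigma}\int_0^\sigma s\,(g+Vw)\,ds\,d\sigma .
\]
It therefore suffices to bound $\sup_r |w(r)|$ by bounding the double integral separately for the $g$ part and the $Vw$ part.

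For the $g$ part, write $|g(s)| \le \|g_1\|_{L^\infty}\, s^{-2}(1-\log s)^{5/2}$. The hypothesis as literally written has the weight $(-\log(r/e))^{-5/2}$. Read that way, the inner integral $\int_0^\sigma s^{-1}(1-\log s)^{5/2}\,ds$ diverges, so I will instead use the weight appearing in \eqref{R-2}, namely $|g|\le \|g_1\| s^{-2}(1-\log s)^{-5/2}$; the exponent sign seems to be a typo. With this weight,
\[
\int_0^\sigma s\,|g|\,ds \le \|g_1\| \int_0^\sigma \frac{ds}{s(1-\log s)^{5/2}} = \tfrac{2}{3}\|g_1\|\,(1-\log\sigma)^{-3/2}.
\]
Then
\[
\int_0^1 \sigma^{-1}(1-\log \sigma)^{-3/2}\,d\sigma = \int_1^\infty x^{-3/2}\,dx = 2,
\]
so the $g$ contribution is bounded by $\tfrac43\|g_1\|_{L^\infty}$, uniformly in $r$. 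The whole point of the weight is that the exponent $5/2>2$ makes this iterated integral converge.

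For the $Vw$ part, the inner integral is controlled by Cauchy–Schwarz on the disk:
\[
\Bigl|\int_0^\sigma s\,Vw\,ds\Bigr| \le \|V\|_\infty \Bigl(\int_0^\sigma s\,ds\Bigr)^{1/2}\Bigl(\int_0^\sigma s w^2\,ds\Bigr)^{1/2} \le C\|V\|_\infty\,\sigma\,\|w\|_{L^2(B_1^2)}.
\]
Dividing by $\sigma$ and integrating over $\sigma\in(r,1)$ leaves a quantity bounded by $C\|V\|_\infty\|w\|_{L^2}$.

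Adding the two contributions gives the claimed estimate with a universal constant. The only genuine point is the convergence of the $g$ term, which forces the reading of the weight with exponent $-5/2$ in $|g|$; I would state that correction explicitly, and under this reading the argument is routine.
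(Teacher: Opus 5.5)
Your proof is correct and follows the paper's argument: the same double-integral representation $w(r)=\int_r^1\sigma^{-1}\int_0^\sigma s\,(g+Vw)\,ds\,d\sigma$, Cauchy--Schwarz for the $Vw$ term, and the convergent integral $\int_0^1\sigma^{-1}(1-\log\sigma)^{-3/2}\,d\sigma$ for the $g$ term. Your reading $|g|\le\lVert g_1\rVert_{L^\infty}r^{-2}(1-\log r)^{-5/2}$ is also what the paper's proof actually uses (its inner bound is $s^{-1}(-\log(s/e))^{-3/2}$). The correction is genuinely necessary, not just convenient: under the literal hypothesis, $g=\varepsilon^{-2}\chi_{r<\varepsilon}$ with $V=0$ gives $\lVert g_1\rVert_{L^\infty}=(1-\log\varepsilon)^{-5/2}\to 0$, while $w(0)=\frac14+\frac12\log(1/\varepsilon)\to\infty$.
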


\begin{proof}
We define $g_1=r^2(-\log (er))^{-5/2}g$. Then, 
by the representation formula, we obtain the following
$$
w(r)=\int_{r}^{1} \,ds \int_{0}^{s}(\frac{l}{s})(g+V(x)w) \,dl. 
$$
By the H\"older inequality, we obtain
$$
\int_{0}^{s}|(\frac{l}{s})(g+V(x)w) |\,dl\le C(\lVert g_1\rVert_{L^{\infty}} s^{-1}(-\log (s/e))^{-3/2}+\lVert V(r)\rVert_{L^{\infty}}\lVert w\rVert_{L^2}).
$$
As a result, we have
\begin{align*}
|w(r)|&\le C(\lVert V\rVert_{L^{\infty}}\lVert w\rVert_{L^2}+\lVert g_1\rVert_{L^{\infty}} \int_{r}^1 s^{-1}(-\log (s/e))^{-3/2}\,ds)\\
&\le C(\lVert V\rVert_{L^{\infty}}\lVert w\rVert_{L^2}+\lVert g_1\rVert_{L^{\infty}}).
\end{align*}
Therefore, we obtain the result.
\end{proof}


{\small
\bibliographystyle{abbrv}
\bibliography{Grow_up_rate}
}

\end{document}